\def\namedlabel#1#2{\begingroup
	#2%
	\def\@currentlabel{#2}%
	\phantomsection\label{#1}\endgroup
}
\renewcommand{\epsilon}{\varepsilon} 
\renewcommand{\H}{\mathcal{H}}
\newcommand{\R}{\mathbb{R}}
\newcommand{\N}{\mathbb{N}}
\newcommand{\tto}{\rightrightarrows}
\newcommand{\gph}{\operatorname{gph}}
\newcommand{\ov}{\overline}
\def \B{\mathbb{B}}
\newcommand{\cl}{{\operatorname{cl}}}
\newcommand{\inte}{{\operatorname{int \  }}}
\newcommand{\bd}{{\operatorname{bd \ }}}
\newcommand{\var}{{\operatorname{var}}}
\DeclareMathOperator{\proj}{proj}
 \definecolor{OliveGreen}{rgb}{0,0.6,0}
\theoremstyle{plain}
\newtheorem{theorem}{Theorem}[section]
\newtheorem{lemma}[theorem]{Lemma}
\newtheorem{proposition}[theorem]{Proposition}
\newtheorem{corollary}[theorem]{Corollary}
\theoremstyle{definition}
\newtheorem{definition}[theorem]{Definition}
\theoremstyle{remark}
\newtheorem{remark}[theorem]{Remark}
\journal{Nuclear Physics B}
\begin{document}
\begin{frontmatter}



\title{Stochastic Perturbation of Sweeping Processes Driven by Continuous Uniformly Prox-Regular Moving Sets}


\author[a]{Juan Guillermo Garrido}
\ead{jgarrido@dim.uchile.cl}
\author[b]{Nabil Kazi-Tani}
\ead{nabil.kazi-tani@univ-lorraine.fr}
\author[c,d]{Emilio Vilches}
\ead{emilio.vilches@uoh.cl}


\affiliation[a]{organization={Departamento de Ingenier\'ia Matem\'atica,  Universidad de Chile},
            state={Santiago},
            country={Chile}}
\affiliation[b]{organization={Université de Lorraine, CNRS, IECL, F-57000},
            state={Metz},
            country={France}}
\affiliation[c]{organization={Instituto de Ciencias de la Ingenier\'ia, Universidad de O'Higgins },
            state={Rancagua},
            country={Chile}}
            
\affiliation[d]{organization={Centro de Modelamiento Matem\'atico  (CNRS UMI 2807), Universidad de Chile },
            state={Santiago},
            country={Chile}}

\begin{abstract}
In this paper, we study the existence of solutions to sweeping processes in the presence of stochastic perturbations, where the moving set takes uniformly prox-regular values and varies continuously with respect to the Hausdorff distance, without smoothness assumptions. We propose a minimal geometric framework for such moving sets, make precise the logical implications between several standard hypotheses in the literature, and provide practical sufficient conditions that apply in particular to constraints defined as finite intersections of sublevel sets. Within this setting, we establish existence of weak and strong solutions and prove pathwise uniqueness for the associated stochastic differential equations reflected in time-dependent domains.
\end{abstract}



\begin{keyword}
Sweeping process\sep Skorokhod problem\sep Stochastic perturbation\sep Stochastic differential equations\sep Prox-regular sets\sep Differential inclusions\\
		 \MSC[2020]{34A60 \sep 60H10 \sep 34G25\sep 49J53}




\end{keyword}

\end{frontmatter}



\section{Introduction}
The sweeping process is a first-order differential inclusion involving the normal cone to a family of moving sets. Introduced by J.J.~Moreau in a series of seminal papers (see, e.g., \cite{MO1,MO2,MR508661}), it was motivated by concrete applications in quasi-static evolution in elastoplasticity, contact mechanics, and friction dynamics (see, e.g., \cite{Moreau_2004,MR513445}). Since then, it has been used to model dynamical systems with time-dependent constraints, notably in nonsmooth mechanics, crowd motion (see, e.g., \cite{Brogliato-M,Maury-Venel}), and electrical-circuit modeling \cite{Acary-Bon-Bro-2011}. In its simplest form, the sweeping process reads as
\begin{equation}\label{sp111111}
  \left\{
  \begin{aligned}
    \dot{x}(t)&\in f(t,x(t)) - N^P(C(t);x(t)) && \text{ for a.e. } t\in[0,T],\\
    x(0)&=x_0\in C(0),
  \end{aligned}
  \right.
\end{equation}
where $\H$ is a Hilbert space, $C\colon[0,T]\tto \H$ is a set-valued map and $f\colon[0,T]\times \H \to \H$ is a mapping. Here $N^P(C(t);x(t))$ denotes the proximal normal cone to the set $C(t)$ at $x(t)\in C(t)$. The above differential inclusion has been thoroughly studied in the case of nonconvex moving sets, notably for uniformly prox-regular sets (a generalization of convexity), for which well-posedness is well established (see, e.g., \cite{MR1994056}).

Parallel to the deterministic sweeping process, reflected stochastic differential equations—and their deterministic counterpart, the Skorokhod problem—constitute a well-established class of objects in probability theory, with a vast literature. They have found numerous applications, notably in constrained stochastic control, PDEs with Neumann boundary conditions, queueing networks, mathematical economics, and game theory.
Building on this line of work, in this paper we study the existence of solutions to the following stochastic sweeping differential inclusion
\begin{equation}\label{spe_intro}
    dX_t\in -N^P(C(t);X_t)dt + \sigma(t,X_t)dB_t + f(t,X_t)dt,
\end{equation}
which corresponds to a stochastically perturbed version of the sweeping process. We work in a finite-dimensional Hilbert space $\H=\mathbb{R}^d$ and consider the case where the moving set $C\colon[0,T]\tto \H$ takes closed, uniformly prox-regular values and depends continuously on time with respect to the Hausdorff distance. To the best of our knowledge, this setting is new and represents the most general framework studied to date for stochastic sweeping processes with uniformly prox-regular moving sets.

The differential inclusion \eqref{spe_intro} can be viewed as a generalization of the Skorokhod problem, which describes the reflection of a given trajectory at the boundary of a moving domain, with the direction of reflection taken to be normal (see, e.g., \cite{Skorokhod1961}). While in the probability theory literature such constrained systems are typically described as stochastic differential equations reflected on time-dependent domains, we adopt for simplicity the terminology \emph{stochastic sweeping process} to emphasize the connection with the deterministic sweeping framework.

For the time-independent case, the stochastic sweeping process reduces to the classical theory of reflected stochastic differential equations and the Skorokhod problem in fixed domains (see, e.g., \cite[Section 1.2]{MR2812587}), a topic that has generated a vast literature, for which we refer the reader to \cite{pilipenko2014introduction} and the references therein. In dimension one, the pathwise reflection construction goes back to Skorokhod \cite{Skorokhod1961}. In several dimensions, Tanaka \cite{MR529332} studied normal reflection in convex domains. Lions and Sznitman \cite{MR745330} then proved existence for reflection in uniformly prox-regular domains. Saisho \cite{MR873889} further extended this line of work by removing a smoothness assumption imposed in \cite{MR745330}. Subsequently, Dupuis and Ishii \cite{dupuis1993sdes} allowed for oblique directions of reflection and non smooth domains.

To the best of the authors' knowledge, the time-dependent case was first addressed in \cite{oshima2004construction}, where the author, using the theory of time-dependent Dirichlet forms, constructed reflecting diffusions on time-dependent domains, given as the image of a fixed domain under a homeomorphic map. In \cite{Costantini2006Boundary}, the authors, motivated by optimal stopping and singular stochastic control, considered a moving set $C$ that varies continuously with respect to the Hausdorff distance, takes uniformly prox-regular values, and satisfies a smoothness condition on the boundary. Subsequently, in \cite{MR2683628}, the Skorokhod problem with oblique reflection in time-dependent domains is studied assuming a continuous reflection direction field and suitable geometric regularity, and existence results for weak solutions of SDEs are given. In contrast, we treat normal reflection on irregular domains, and we additionally prove existence of strong solutions together with pathwise uniqueness under our setting. In \cite{lundstrom2019stochastic}, the authors also impose a continuous oblique reflection direction and Sobolev-in-time regularity of the boundary. They then prove existence and uniqueness of strong solutions to obliquely reflected SDEs and of viscosity solutions to fully nonlinear PDEs with oblique derivative boundary conditions. In \cite{MR2812587}, the results of \cite{MR873889,MR745330} were extended to the time-dependent setting, assuming that the moving set is absolutely continuous with respect to the Hausdorff distance. More recently, in \cite{MR3509666,MR4461030}, existence was established for time-dependent convex moving sets with nonempty interior, without additional geometric conditions, and in the case where the driving noise is rough, which include the case of fractional Brownian motion.

While these contributions have significantly advanced the theory, the case of stochastically perturbed sweeping processes with time-dependent, uniformly prox-regular moving sets has, to the best of our knowledge, not yet been systematically addressed. The purpose of this work is to help fill this gap by establishing existence results within a framework that appears to be the most general studied so far.

The commonly adopted approach to deal with stochastic sweeping processes consists first in studying a deterministic Skorokhod problem (see, e.g., \cite{MR873889, MR529332, MR745330, MR2812587}), which then provides a natural interpretation when the perturbation is stochastic. Our goal is therefore twofold: on the one hand, to study the existence of sweeping process dynamics when the moving set is Hausdorff-continuous and takes uniformly prox-regular values, exploring different geometric hypotheses inspired by the related literature; on the other hand, to extend these existence results to the stochastic framework.


\paragraph{Contributions}
Our contributions are threefold.\\
(i) \textbf{Geometric framework:} We introduce a minimal geometric setup for moving sets with uniformly prox-regular values and merely Hausdorff-continuous time dependence, and we
systematize the assumptions by making their logical implications explicit and
by positioning them within the existing literature. In particular, we identify
what is, to the best of our knowledge, the weakest verifiable condition
available for Skorokhod-type problems, and we provide practical criteria that
apply, in particular, to constraints given as finite intersections of sublevel
sets. This first part furnishes the geometric foundation for the study of
 stochastic sweeping processes.\\
\noindent (ii) \textbf{Deterministic Skorokhod/sweeping process problem:} Within this geometric setting, we
prove new well-posedness results for the deterministic sweeping process
(equivalently, its Skorokhod-type formulation) under continuous perturbations.
We also derive quantitative bounds on the total variation of the correction
term, including uniform estimates for families of equicontinuous perturbations.\\
\noindent (iii) \textbf{Stochastic sweeping process:} We establish existence of weak and
strong solutions, as well as pathwise uniqueness, for stochastic perturbations
of the sweeping process driven by moving constraint sets that are nonconvex and
nonsmooth. To the best of our knowledge, these results hold under the most
general set of assumptions currently available in this framework.

 The paper is organized as follows. In Section \ref{sec2}, we present the necessary mathematical background and notation used throughout the paper. Section \ref{sec3} introduces the main geometric hypotheses considered in the literature, together with examples illustrating their validity. In Section \ref{sec4}, we establish relationships between the geometric hypotheses introduced in Section \ref{sec3}. Section \ref{sec5} recalls an important existence and uniqueness result for the sweeping process from \cite{MR1946545} and provides an estimate of the total variation of the solution for a perturbed version of \eqref{sp111111}. Section \ref{sec7} addresses \eqref{spe_intro} via the Skorokhod problem and establishes a weak existence result under general geometric assumptions, together with pathwise uniqueness in the Lipschitz case, where we also obtain strong existence and uniqueness of solutions to \eqref{spe_intro}. Finally, some technical proofs present in the work are included in the appendix.

\section{Preliminaries}\label{sec2}
	
	Throughout the paper, $\H$ stands for a Hilbert space endowed with the inner product $\langle\cdot,\cdot\rangle$ and norm $\|\cdot\|$. The closed (resp. open) ball centered at $x$ with radius $r>0$ is denoted by $\mathbb{B}_r[x]$ (resp. $\mathbb{B}_r(x)$), and the closed unit ball is denoted by $\mathbb{B}$. For a subset $A\subset\H$, we denote the closure, interior and boundary of $A$, respectively, as $\cl A$, $\inte A$, and $\bd A$. We also set $\mathbb{S} = \{x\in\H : \|x\| = 1\}$.\\
	For a given set $S\subset \H$, the \emph{support function} and the \emph{distance function} of $S$ at $x\in \H$ are defined, respectively, by
	$$
	\sigma(x;S):= \sup_{z\in S}\langle x,z\rangle \quad \textrm{and} \quad d(x;S):=\inf_{z\in S}\|x-z\|.
	$$
    Sometimes, we write $d_S(x):=d(x;S)$ for brevity. The \emph{convex hull}  of $S$ is denoted as $\operatorname{co}(S)$. Given $\rho\in ]0,+\infty]$ and $\gamma\in ]0,1[$, the  \emph{$\rho$-enlargement} and the 
	\emph{$\gamma\rho$-enlargement} of $S$ are 
	$$
	U_\rho(S) := \{x\in \H:d(x;S)<\rho\} \textrm{ and } U_\rho^\gamma(S):=U_{\gamma\rho}(S).
	$$
	Given two sets $A,B\subset \H$, the \emph{excess} of $A$ over $B$ is 
    $$e(A,B) := \sup_{x\in A} d(x;B),$$ and the \emph{Hausdorff distance} between $A$ and $B$ is 
	$$d_H(A,B) := \max\{e(A,B),e(B,A)\}.$$
	Further properties of the Hausdorff distance can be found in \cite[Sec. 3.16]{MR2378491}. The \emph{projection} of $x$ onto $S\subset \H$ is the (possibly empty) set  
$$\operatorname{Proj}_{S}(x):=\left\{z\in S : d_{S}(x)=\Vert x-z\Vert\right\}.$$ When the above set is a singleton, we denote its unique element by $\operatorname{proj}_{S}(x)$.
	
	\noindent Let $f\colon \H\to \mathbb{R}\cup\{+\infty\}$ be a \emph{lower semicontinuous} (lsc) function and let $x\in \operatorname{dom}f$. We say that $\zeta$ belongs to the \emph{proximal subdifferential}   of $f$ at $x$, denoted  $\partial_P f(x)$, if there exist $\sigma\geq 0$ and $\eta\geq 0$ such that
		\begin{equation*}
			f(y)\geq f(x)+\left\langle \zeta,y-x\right\rangle -\sigma\Vert y-x\Vert^2 \textrm{ for all } y\in \mathbb{B}_{\eta}(x).
		\end{equation*}
	Using this notion, the \emph{proximal normal cone} of a set $S\subset \H$ at $x\in S$ is
	$$
	N^P(S;x):=\partial_P I_S(x),
	$$
	where $I_S$ is the indicator function of $S\subset \H$ (i.e., $I_S(x)=0$ if $x\in S$ and $I_S(x)=+\infty$ otherwise). It is well-known (see \cite[Theorem 4.1]{MR1870754}) that, for all $x\in S$, 
	\begin{equation}\label{normal-distancia}
		N^{P}(S;x)\cap \mathbb{B}=\partial_P d_S(x).
	\end{equation}
	
	We recall the concept of uniformly prox-regular sets. It was introduced by Federer in the finite-dimensional case (see \cite{MR110078}) and later developed by Rockafellar, Poliquin, and Thibault \cite{Poliquin2000}.  Prox-regularity generalizes and unifies convexity and nonconvex bodies with $C^2$ boundaries. For surveys, see \cite{MR2768810,Thibault-2023-II}.
	
\begin{definition}
Let $S\subset \H$ be nonempty and closed and $\rho\in ]0,+\infty]$. We say that $S$ is \emph{$\rho$-uniformly prox-regular} if, for all $x\in S$ and $\zeta\in N^P(S;x)$, 
		\begin{equation*}
			\langle \zeta,x'-x\rangle\leq \frac{\|\zeta\|}{2\rho}\|x'-x\|^2 \textrm{ for all } x'\in S.
		\end{equation*}
	\end{definition}
	Note that every closed convex set is $\rho$-uniformly prox-regular for every $\rho>0$. The following proposition provides a characterization of uniformly prox-regular sets 
	(see, e.g.,  \cite{MR2768810}). 
	\begin{proposition}\label{prox_reg_prop}
		Let $S\subset \H$ be a closed set and $\rho\in ]0,+\infty]$. The following assertions are equivalent:
		\begin{enumerate}
			\item [(a)] $S$ is $\rho$-uniformly prox-regular.
			\item [(b)] For any positive $\gamma<1$ the mapping $\proj_S$ is well-defined on $U_\rho^\gamma(S)$ and Lipschitz continuous on $U_\rho^\gamma(S)$ with $(1-\gamma)^{-1}$ as a Lipschitz constant, i.e.,
			$$\left\|\proj_S\left(u_1\right)-\proj_S\left(u_2\right)\right\| \leq(1-\gamma)^{-1}\left\|u_1-u_2\right\| \quad$$ for all $u_1, u_2 \in U_\rho^\gamma(S)$.
		\end{enumerate}
	\end{proposition}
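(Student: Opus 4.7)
The plan is to prove the two implications separately, exploiting the characterization \eqref{normal-distancia} that identifies unit proximal normals with subgradients of the distance function.

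\emph{Direction (a) $\Rightarrow$ (b).} Fix $\gamma \in {]0,1[}$ and $u \in U_\rho^\gamma(S)$, so that $d_S(u) < \gamma\rho$. For uniqueness, take $p_1, p_2 \in \proj_S(u)$ and set $\zeta_i := u - p_i \in N^P(S;p_i)$ with $\|\zeta_i\| = d_S(u)$. Applying the prox-regularity inequality at $(p_1, \zeta_1)$ with test point $p_2$ and symmetrically at $(p_2, \zeta_2)$ with test point $p_1$, and summing, yields
\[
\|p_1 - p_2\|^2 = \langle \zeta_1 - \zeta_2,\, p_2 - p_1\rangle \le \frac{\|\zeta_1\| + \|\zeta_2\|}{2\rho}\,\|p_1 - p_2\|^2 \le \gamma\,\|p_1 - p_2\|^2,
\]
forcing $p_1 = p_2$. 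Existence of a projection element follows in finite dimensions from compactness of a minimizing sequence; in a general Hilbert space one shows such a sequence is Cauchy via the same type of estimate. For the Lipschitz bound, given $u_1, u_2 \in U_\rho^\gamma(S)$ with projections $p_i = \proj_S(u_i)$ and normals $\zeta_i := u_i - p_i$ of norm less than $\gamma\rho$, the same symmetric pair of inequalities, after substituting $\zeta_1 - \zeta_2 = (u_1 - u_2) - (p_1 - p_2)$, produces $(1-\gamma)\|p_1 - p_2\|^2 \le \langle u_1 - u_2, p_1 - p_2\rangle$, and Cauchy--Schwarz yields the claimed estimate $\|p_1-p_2\| \le (1-\gamma)^{-1}\|u_1-u_2\|$.

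\emph{Direction (b) $\Rightarrow$ (a).} Fix $x \in S$ and a nonzero $\zeta \in N^P(S;x)$, normalized to $\|\zeta\|=1$. The target inequality will follow once it is shown that $\proj_S(x + t\zeta) = x$ for every $t \in {]0,\rho[}$, since expanding $\|x + t\zeta - x'\|^2 \ge t^2$ at an arbitrary $x' \in S$ gives $\langle \zeta, x'-x\rangle \le \frac{1}{2t}\|x'-x\|^2$, and letting $t \to \rho^-$ followed by rescaling to general $\zeta$ produces the prox-regularity inequality. Setting $T := \{t \in {]0,\rho[} : \proj_S(x + t\zeta) = x\}$, the definition of proximal normal supplies an initial interval $]0,\epsilon_0[\, \subset T$; comparing the two ray-expansions shows that $T$ is downward closed; and the Lipschitz continuity of $\proj_S$ on each $U_\rho^\gamma(S)$ makes $t \mapsto \proj_S(x+t\zeta)$ continuous on $]0,\rho[$, so $T$ is closed in $]0,\rho[$. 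Therefore $T = {]0,t_0]}$ for some $t_0 \in {]0,\rho]}$, and the task reduces to excluding the case $t_0 < \rho$.

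\emph{Main obstacle.} The crux is precisely propagating the identity $\proj_S(x+t\zeta)=x$ past a hypothetical maximal value $t_0 < \rho$. The direct combination of the two ray-inequalities at $t_0$ and at $t > t_0$ only reproduces the tautology $t \ge t_0$, so one must feed in quantitative information coming from \emph{off-ray} perturbations. Concretely, one considers $p(t) := \proj_S(x + t\zeta)$ for $t$ slightly above $t_0$, which tends to $x$ at rate $O(t-t_0)$ by the Lipschitz bound, and compares $p(t)$ with the projections of perturbed base points $x' + s\zeta$ using the full $(1-\gamma)^{-1}$-Lipschitz estimate on $U_\rho^\gamma(S)$; this finer analysis---morally equivalent to the $C^{1,+}$ regularity of $d_S^2$ on the tube---forces $p(t) = x$ in a right-neighborhood of $t_0$, contradicting the maximality of $t_0$ and thus yielding $t_0 = \rho$.
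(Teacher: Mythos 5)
This proposition is not proved in the paper: it is quoted as a known characterization of uniform prox-regularity (cited from the survey literature, e.g.\ Colombo--Thibault), so your proposal should be judged as a self-contained proof attempt. Your direction (a) $\Rightarrow$ (b) follows the standard hypomonotonicity computation and the uniqueness and Lipschitz estimates are correct; the one soft spot is existence of $\proj_S(u)$ in an infinite-dimensional $\H$. The phrase ``one shows such a sequence is Cauchy via the same type of estimate'' does not work as stated: the estimate you use requires $u-p_i\in N^P(S;p_i)$, i.e.\ that $p_i$ already be nearest points, whereas the elements of a minimizing sequence carry no proximal normal information. Repairing this needs an extra ingredient (e.g.\ density in $\H$ of points admitting nearest points via a variational principle, or an Ekeland-type argument producing approximate projections with proximal normals, and then the Lipschitz estimate to pass to the limit).

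The genuine gap is in (b) $\Rightarrow$ (a), and you identify it yourself: the whole direction reduces to showing $\proj_S(x+t\zeta)=x$ for every $t<\rho$, and the continuation of this identity past a hypothetical maximal $t_0<\gamma\rho$ is exactly the nontrivial content of the equivalence. Your ``main obstacle'' paragraph only describes what such an argument should morally achieve (a comparison of $p(t)$ with projections of off-ray perturbed points, ``equivalent to $C^{1,+}$ regularity of $d_S^2$ on the tube'') without carrying it out, and as you correctly observe the naive combination of the two ray inequalities at $t_0$ and $t>t_0$ yields nothing. Since the reduction steps you do complete (the initial interval from the proximal-normal inequality, downward closedness, closedness of $T$ via Lipschitz continuity of $\proj_S$, and the passage from $d_S(x+t\zeta)=t$ to the prox-regularity inequality by letting $t\to\rho^-$) are all routine, the missing propagation step is precisely where the theorem lives; in the standard proofs it is handled, for instance, by first establishing differentiability of $d_S$ (equivalently $C^{1,+}$ regularity of $d_S^2$) on the open tube from single-valuedness and continuity of the projection, and then running an ODE/monotonicity argument along the ray. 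As written, the proposal is therefore incomplete: (a) $\Rightarrow$ (b) is essentially right modulo the existence issue, but (b) $\Rightarrow$ (a) is a plan with its key step left unproven.
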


The space of continuous functions from $[0,T]$ to $\H$ is denoted by $\mathcal{C}([0,T];\H)$. We endow this space with the usual supremum norm, that is, for $f\in \mathcal{C}([0,T];\H)$, $$\|f\|_\infty = \sup_{t\in [0,T]}\|f(t)\|.$$
We set 
\begin{equation*}
\mathcal{C}_0([0,T];\H) = \{w\in \mathcal{C}([0,T];\H):w(0) = 0\}.
\end{equation*}
A family of functions $A\subset \mathcal{C}([0,T];\H)$ is said to be \emph{equicontinuous} if for all $\epsilon>0$, there is $\delta>0$ such that for all $s,t\in [0,T]$ with $|s-t|<\delta$ implies $\forall h\in A : \|h(s)-h(t)\|< \epsilon$.
    
	\begin{definition}
		A set-valued mapping $C\colon [0,T]\tto \H$ is said to be Hausdorff-continuous if, for every $\epsilon >0$, there exists $\delta>0$ such that 
        $$
        d_H(C(s),C(t))<\epsilon \textrm{ for all } s,t\in [0,T] \text{ with } |t-s|<\delta.$$
	\end{definition}
	For a set-valued mapping $C\colon [0,T]\tto \H$, define its modulus of continuity  by
    \begin{equation}\label{mod-cont-def}
        \mathscr{P}_C(r) := \sup\{ d_H(C(t),C(s)): t,s\in [0,T], |t-s|\leq r\}.
    \end{equation}
    If $C\colon [0,T]\to \H$ is Hausdorff-continuous, then $\displaystyle\lim_{r\searrow 0} \mathscr{P}_C(r) = 0$ and the map $r\mapsto\mathscr{P}_C(r)$ is nondecreasing, so we may define its (right) generalized inverse by 
    $$\mathscr{P}_C^{-1}(x) := \inf\{ r\geq 0 : \mathscr{P}_C(r)\geq x \},$$
    with the convention $\inf \emptyset = \infty$. Moreover, we observe that $\mathscr{P}_C(r)\geq x>0$ implies that $r\geq \mathscr{P}_C^{-1}(x)>0$. 

    \begin{lemma}\label{lem-equi-mc}
		Suppose that $A\subset \mathcal{C}([0,T];\H)$ is an equicontinuous set, and let $C$ be a set-valued mapping with closed values that is Hausdorff-continuous on $[0,T]$. For every $h\in A$, define $C_h := C-h$. Then $$\lim_{r\searrow 0}\sup_{h\in A}\mathscr{P}_{C_h}(r) = 0.$$
        Moreover, for all $\eta>0$, $\displaystyle\inf_{h\in A} \mathscr{P}_{C_h}^{-1}(\eta)>0$.
	\end{lemma}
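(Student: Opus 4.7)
The plan is to control $\mathscr{P}_{C_h}(r)$ by separating the time variation of $C$ and of $h$. First I would observe that translation by a common vector leaves the Hausdorff distance unchanged, so the triangle inequality gives, for any $s,t\in[0,T]$ and any $h\in A$,
\begin{equation*}
d_H(C_h(s),C_h(t))\;\le\; d_H\bigl(C(s)-h(s),\,C(t)-h(s)\bigr)+d_H\bigl(C(t)-h(s),\,C(t)-h(t)\bigr).
\end{equation*}
The first term equals $d_H(C(s),C(t))$ and the second is bounded by $\|h(s)-h(t)\|$ (a direct estimate on the excess after picking any $y\in C(t)$ and taking $y$ again as a competitor in $C(t)-h(t)+h(s)$).

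Passing to the supremum over $|t-s|\le r$ yields the key inequality
\begin{equation*}
\mathscr{P}_{C_h}(r)\;\le\;\mathscr{P}_C(r)+\omega_h(r),\qquad \omega_h(r):=\sup_{|t-s|\le r}\|h(t)-h(s)\|.
\end{equation*}
Taking the supremum over $h\in A$ and using that $\mathscr{P}_C(r)\to 0$ (Hausdorff continuity of $C$) together with $\sup_{h\in A}\omega_h(r)\to 0$ as $r\searrow 0$ (equicontinuity of $A$) gives the first assertion $\lim_{r\searrow 0}\sup_{h\in A}\mathscr{P}_{C_h}(r)=0$.

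For the second assertion, I would argue from the first. Fix $\eta>0$ and, using the first limit, pick $r_0>0$ such that $\sup_{h\in A}\mathscr{P}_{C_h}(r_0)<\eta$. Since each $r\mapsto\mathscr{P}_{C_h}(r)$ is nondecreasing, we have $\mathscr{P}_{C_h}(r)<\eta$ for every $r\le r_0$ and every $h\in A$, so the set $\{r\ge 0:\mathscr{P}_{C_h}(r)\ge \eta\}$ is contained in $(r_0,+\infty)$. By definition of the generalized inverse, $\mathscr{P}_{C_h}^{-1}(\eta)\ge r_0$ for all $h\in A$, which yields $\inf_{h\in A}\mathscr{P}_{C_h}^{-1}(\eta)\ge r_0>0$.

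No step looks truly delicate here: the only potential pitfall is the Hausdorff-distance bound under non-identical translations, where one must not mistakenly claim $d_H(S-u,S-v)=\|u-v\|$ in general, but only the (sufficient) upper bound $d_H(S-u,S-v)\le\|u-v\|$ via the excess. Everything else is a direct consequence of the nondecreasing monotonicity of $\mathscr{P}_{C_h}$ and the definitions.
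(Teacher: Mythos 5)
Your proof is correct and follows essentially the same route as the paper: the key estimate $d_H(C_h(s),C_h(t))\le d_H(C(s),C(t))+\|h(s)-h(t)\|$ combined with Hausdorff continuity of $C$ and equicontinuity of $A$ is exactly the paper's argument for the first assertion. For the second assertion you argue directly via monotonicity of $\mathscr{P}_{C_h}$, which is just the contrapositive of the paper's proof by contradiction, so the content is the same.
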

	\begin{proof}
	Consider $\epsilon>0$. First, there exists $\delta_1>0$ such that for all $r\in ]0,\delta_1[$, we have $\mathscr{P}_C(r)<\epsilon/2$. Moreover, by equicontinuity, there exists $\delta_2>0$ such that for every $t,s\in [0,T]$ with $|t-s|<\delta_2$, $$\sup_{h\in A}\|h(t)-h(s)\|<\epsilon/2.$$
    Set $\delta:=\min\{\delta_1,\delta_2\}$. Then, for $r<\delta$ and $t,s$ with $|t-s|< r$, then
		\begin{equation*}
			d_H(C_h(t),C_h(s)) \leq d_H(C(t),C(s)) + \|h(t)-h(s)\|
			\leq \mathscr{P}_C(r) + \epsilon/2\leq \epsilon,
		\end{equation*}
		which proves that $\displaystyle\lim_{r\searrow 0}\sup_{h\in \mathcal{A}}\mathscr{P}_{C_h}(r) = 0$. Finally, suppose that for certain $\eta>0$, $\inf_{h\in A} \mathscr{P}_{C_h}^{-1}(\eta)=0$, then there is a sequence $(h_n)\subset A$ such that $\displaystyle\lim_{n\to \infty}\mathscr{P}_{C_{h_n}}^{-1}(\eta)=0$. By definition, there is $r_n\geq 0$ such that $\mathscr{P}_{C_{h_n}}^{-1}(\eta) + \frac{1}{n}\geq r_n$ where $\mathscr{P}_{C_{h_n}}(r_n)\geq \eta$, it follows that $\sup_{h\in A} \mathscr{P}_{C_{h}}(r_n)\geq \eta, \forall n\in \N$, but $r_n\to 0$, therefore $$\displaystyle\lim_{n\to\infty} \sup_{h\in A} \mathscr{P}_{C_{h}}(r_n) = 0$$ which is a contradiction. 
	\end{proof}
    Let us consider a function $f\colon [0,T]\to \H$. For every $a,b\in [0,T]$ with $a<b$,  we define the \emph{variation} of $f$ on $[a,b]$ as 
    $$
    \hspace{-1mm}\operatorname{var}(f;[a,b]) = \sup_{n\in \N}\left\{ \sum_{i=1}^n\|f(s_{i+1})-f(s_i)\| : a\leq s_1\leq \cdots\leq s_{n+1}\leq b \right\}.
    $$
    We say that $f$ has bounded variation on the interval $[a,b]$ if $\operatorname{var}(f;[a,b])$ is finite. The space of continuous functions from $[0,T]$ to $\H$ with bounded variation is denoted by $\mathcal{C}_{BV}([0,T];\H)$. For $x\in \mathcal{C}_{BV}([0,T];\H)$, we denote by \( dx \) its differential measure (see, e.g., \cite[Theorem 1, p. 358]{MR206190}) and recall that it is a nonatomic \( \H \)-valued measure such that 
    $$
    \int_{[s,t]} dx = x(t) - x(s)  \textrm{ for all }  s,t \in [0,T].
    $$
    The \textit{modulus measure} \( | m | \) of an \( \H \)-valued measure \( m \) is defined as 
    \[ |m|(A) := \sup\left\{ \sum_{B\in \pi} \|m(B)\| : \pi \text{ is a finite measurable partition of } A \right\}. \]
    If \( f\colon [0,T] \to \H \) is continuous and \( u\in \mathcal{C}_{BV}([0,T];\H) \), we set
    \begin{equation*}
    	\int_{[s,t]} \langle f , du\rangle := \int_{[s,t]} \left\langle f, \frac{du}{|du|}\right\rangle |du|,
    \end{equation*}
    where \( \frac{du}{|du|} \) is the density of \( du \) with respect to \( |du| \). Moreover, observe that \( \frac{du}{|du|}\in \mathbb{S} \) for \( |du| \)-almost every point.  When the measure is nonatomic, the notation \( \int_s^t \) refers to the integral on the interval $[s,t]$. The notation \( \nu \ll \mu \) means that \( \nu \) is absolutely continuous with respect to \( \mu \).	
	We state the following version of Grönwall's inequality.
	\begin{lemma}\label{gronwall-lemma}
		Let $\mu$ be a Borel measure on $[a,b]$, let $\epsilon>0$, and let $f\colon [a,b]\to\R$ be a right-continuous function satisfying that 
		\begin{equation*}
			0\leq f(t)\leq \epsilon + \int_{[a,t[}f(s)d\mu \quad \textrm{ for all } t\in [a,b].
		\end{equation*}
		Then,  $f(t)\leq \epsilon\exp\left(\mu([a,t[)\right)$ for all $t\in [a,b]$.
	\end{lemma}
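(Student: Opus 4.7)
The plan is to iterate the inequality $n$ times and then pass to the limit as $n\to\infty$. Substituting $f(s_1)\leq\epsilon+\int_{[a,s_1[}f\,d\mu$ into itself and applying Fubini--Tonelli yields, for every $n\geq 1$,
\[
f(t)\;\leq\;\epsilon\sum_{k=0}^{n-1}I_k(t)\;+\;R_n(t),
\]
where $I_k(t):=\mu^{\otimes k}\bigl(\{(s_1,\dots,s_k):a\leq s_k<s_{k-1}<\cdots<s_1<t\}\bigr)$ is the $\mu^{\otimes k}$-measure of the ordered $k$-simplex in $[a,t[^k$ and $R_n(t):=\int_{\{a\leq s_n<\cdots<s_1<t\}}f(s_n)\,d\mu^{\otimes n}$ is the remainder.

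A symmetry argument yields the crucial simplex estimate $I_k(t)\leq\mu([a,t[)^k/k!$: the $k!$ ordered simplices $\{s_{\sigma(1)}<\cdots<s_{\sigma(k)}\}$, $\sigma\in S_k$, are pairwise disjoint subsets of $[a,t[^k$ with identical $\mu^{\otimes k}$-measure $I_k(t)$, and together with the diagonals $\{s_i=s_j\}$ (which carry \emph{nonnegative} $\mu^{\otimes k}$-measure even when $\mu$ has atoms) cover $[a,t[^k$, so that $k!\,I_k(t)\leq\mu([a,t[)^k$. Provided that $f$ is bounded on $[a,b]$ by some $M<\infty$, one then obtains $R_n(t)\leq M\mu([a,b])^n/n!\to 0$ as $n\to\infty$, and the partial sums converge to $\epsilon\exp(\mu([a,t[))$, which concludes the proof.

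The main obstacle is the boundedness of $f$, since right-continuity on $[a,b]$ does not by itself imply boundedness (consider $t\mapsto 1/(b-t)$). I would establish it jointly with the iteration by a maximality argument. Set $\sigma:=\sup\{t\in[a,b]:f\text{ is bounded on }[a,t]\}$. For any $t<\sigma$ the function $f$ is bounded on $[a,t]$ by some $M_t<\infty$, and the finite iteration yields $f(s)\leq\epsilon\sum_{k=0}^{n-1}\mu([a,s[)^k/k!+M_t\mu([a,b])^n/n!$ for $s\in[a,t]$; letting $n\to\infty$ produces the \emph{uniform} bound $f(s)\leq\epsilon\exp(\mu([a,b]))$ on $[a,\sigma[$, independent of $M_t$. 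Since $f(\sigma)<\infty$, $f$ is bounded on $[a,\sigma]$; if $\sigma<b$, right-continuity of $f$ at $\sigma$ extends the bound to a small interval $[\sigma,\sigma+\eta]$, contradicting maximality. Hence $\sigma=b$, $f$ is bounded on $[a,b]$, and the iteration applies globally to give $f(t)\leq\epsilon\exp(\mu([a,t[))$ for all $t\in[a,b]$.
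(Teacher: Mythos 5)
Your proof is correct, and it is genuinely different from the paper's: the paper disposes of this lemma by citing an external result (\cite[Lemma 7.11]{MR4484114}), whereas you give a self-contained Picard-type iteration over ordered simplices. The two delicate points are handled properly: the simplex estimate $I_k(t)\leq \mu([a,t[)^k/k!$ is obtained only as an inequality, via permutation invariance of $\mu^{\otimes k}$ and disjointness of the strictly ordered simplices, which is exactly what is needed when $\mu$ has atoms (the strict inequalities coming from the open upper limits $[a,s_i[$ make the diagonal mass spill over, and you only throw it away); and the a priori boundedness of $f$, which indeed does not follow from right-continuity alone, is settled by your bootstrap with $\sigma=\sup\{t\in[a,b]: f \text{ bounded on }[a,t]\}$ — the key being that the iteration yields on $[a,\sigma[$ the uniform bound $\epsilon\exp(\mu([a,b]))$ independent of the local bound $M_t$, after which right-continuity at $\sigma$ pushes boundedness past $\sigma$ whenever $\sigma<b$. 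One small caveat: both the vanishing of the remainder and the bootstrap use $\mu([a,b])<\infty$. This is the standard reading of a Borel measure on a compact interval and is what occurs in the paper's applications (there $\mu$ is built from total-variation measures of $BV$ solutions), but if one allows $\mu$ to be infinite the same argument should simply be localized, replacing $[a,b]$ by $[a,t]$ and $\mu([a,b])$ by $\mu([a,t[)$ for those $t$ with $\mu([a,t[)<\infty$, the conclusion being trivial otherwise. What the paper's citation buys is brevity; what your argument buys is an elementary, fully self-contained proof that makes transparent that only positivity of $\mu$, nonnegativity of $f$, and right-continuity are used.
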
 
	\begin{proof}
		This is a consequence of \cite[Lemma 7.11]{MR4484114}.
	\end{proof}
    
The following lemma establishes a property of a uniformly prox-regular moving set with nonempty interior. The proof is given in \ref{appendix1}. 

    \begin{lemma}\label{lemma_partition}
Let $C\colon [0,T]\rightrightarrows \H$ be Hausdorff-continuous with closed values.
Assume that there exist $\rho>0$ such that, for every $t\in[0,T]$, the set $C(t)$ has nonempty interior and is $\rho$-uniformly prox-regular. Suppose that for some $\ov t\in [0,T]$ there is $r>0$ and $\bar x\in C(\ov t)$ such that $\mathbb{B}_r(\ov x)\subset C(\ov t)$, then, $\mathbb{B}_{r/2}(\ov x)\subset C(t)$ for all $t\in ]\ov t-\delta, \ov t+\delta[\cap [0,T]$ where $\delta := \mathscr{P}_{C}^{-1}(\min\{r/2,\rho/2\})$.
\end{lemma}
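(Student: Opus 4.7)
The plan is to argue by contradiction: for a fixed $y\in\mathbb{B}_{r/2}(\bar x)$, I would assume $y\notin C(t)$ and derive a contradiction by combining the Hausdorff-closeness of $C(\bar t)$ and $C(t)$ with the exterior-ball/distance property of $\rho$-uniformly prox-regular sets.

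First, I would unwind the definition of the generalized inverse. Setting $\eta:=\min\{r/2,\rho/2\}$, the strict inequality $|t-\bar t|<\delta=\mathscr{P}_C^{-1}(\eta)$ forces $\mathscr{P}_C(|t-\bar t|)<\eta$ (otherwise $|t-\bar t|$ would belong to the set $\{s\ge 0:\mathscr{P}_C(s)\ge\eta\}$, contradicting $|t-\bar t|<\inf$ of that set); hence $d_H(C(\bar t),C(t))\le\mathscr{P}_C(|t-\bar t|)<\eta$.

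Next, fix $y\in\mathbb{B}_{r/2}(\bar x)\subset\mathbb{B}_r(\bar x)\subset C(\bar t)$ and suppose for contradiction that $y\notin C(t)$. Then $0<d(y;C(t))\le d_H(C(\bar t),C(t))<\eta\le\rho/2$, so $y\in U_\rho^{1/2}(C(t))$; Proposition~\ref{prox_reg_prop} (with $\gamma=1/2$) yields a unique projection $z:=\proj_{C(t)}(y)$, and $\zeta:=(y-z)/\|y-z\|$ is a unit proximal normal to $C(t)$ at $z$. From $\rho$-uniform prox-regularity I would derive the distance identity $d(z+s\zeta;C(t))=s$ for all $s\in[0,\rho]$, by expanding $\|z+s\zeta-z'\|^2$ for $z'\in C(t)$ and applying the defining inequality to the unit normal $\zeta$. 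I would then push $y$ outward along $\zeta$ by defining $v:=z+\eta\zeta=y+(\eta-\|y-z\|)\zeta$. Since $\eta\le\rho$ the identity gives $d(v;C(t))=\eta$, while the triangle inequality yields $\|v-\bar x\|\le(\eta-\|y-z\|)+\|y-\bar x\|<\eta+r/2\le r$, so $v\in\mathbb{B}_r(\bar x)\subset C(\bar t)$. Therefore $\eta=d(v;C(t))\le d_H(C(\bar t),C(t))<\eta$, a contradiction, and hence $y\in C(t)$.

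The main obstacle, which is also the reason the specific quantity $\min\{r/2,\rho/2\}$ appears in the definition of $\delta$, lies in the choice of the push-out amplitude $s$: one needs $s$ large enough that $d(v;C(t))=s$ strictly exceeds $d_H(C(\bar t),C(t))$, yet small enough that $v$ remains inside the prox-regular tube of $C(t)$ (forcing $s\le\rho$) and inside $\mathbb{B}_r(\bar x)\subset C(\bar t)$ (forcing $s+\|y-\bar x\|<r$, hence $s<r/2$). Taking $s=\eta=\min\{r/2,\rho/2\}$ meets both constraints simultaneously, and everything else is routine.
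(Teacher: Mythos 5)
Your proof is correct and follows essentially the same route as the paper's: argue by contradiction, project the offending point onto $C(t)$, push outward along the proximal normal ray using the prox-regular distance identity, land inside $\mathbb{B}_r(\bar x)\subset C(\bar t)$, and contradict the Hausdorff bound $d_H(C(\bar t),C(t))<\min\{r/2,\rho/2\}$. The only difference is cosmetic: you push out to distance exactly $\eta$ (getting $\eta<\eta$), while the paper pushes to $d(a;C(t))+\beta$ (getting $\alpha>\beta$ versus $\alpha<\beta$); both choices satisfy the same constraints and the argument is otherwise identical.
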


In the next lemma, we prove a property of uniformly prox-regular moving sets which are Hausdorff-continuous. Its proof is given in \ref{C(t)=H.proof}.
\begin{lemma}\label{C(t)H-statement} Suppose that $C\colon [0,T]\tto \H$ is a Hausdorff-continuous set-valued mapping with closed values. Assume that there exist $\rho>0$ such that, for every $t\in[0,T]$, the set $C(t)$ is $\rho$-uniformly prox-regular. Assume that for some $t_0\in [0,T]$, $C(t_0) = \H$, then for all $t\in [0,T]$, $C(t) = \H$. 
    \end{lemma}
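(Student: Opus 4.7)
The plan is a connectedness argument. Set $A := \{t \in [0,T] : C(t) = \H\}$. The hypothesis gives $t_0 \in A$, so $A$ is nonempty, and it suffices to show that $A$ is both open and closed in $[0,T]$; connectedness of $[0,T]$ then yields $A = [0,T]$.

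The essential geometric ingredient is the following: \emph{if $S\subset \H$ is a nonempty closed $\rho$-uniformly prox-regular set with $\sup_{x\in\H} d(x;S) < \rho$, then $S = \H$}. To prove this, I would assume $S \subsetneq \H$ and show that $\sup_{x\in \H}d(x;S) \geq \rho$. If every point of $\H\setminus S$ lies at distance at least $\rho$ from $S$, we are done immediately. Otherwise, pick $x_0 \notin S$ with $d(x_0;S) < \rho$; by Proposition \ref{prox_reg_prop}, $y_0 := \proj_S(x_0)$ is well-defined and $\xi_0 := (x_0-y_0)/\|x_0-y_0\|$ is a unit vector in $N^P(S;y_0)$. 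Applying the defining prox-regularity inequality to $\xi_0$ at $y_0$, one gets for every $y'\in S$ and every $\lambda \in [0,\rho[$
\begin{equation*}
\|y_0 + \lambda\xi_0 - y'\|^2 \geq \|y_0-y'\|^2 (1-\lambda/\rho) + \lambda^2 \geq \lambda^2,
\end{equation*}
hence $\proj_S(y_0+\lambda\xi_0) = y_0$ and $d(y_0+\lambda\xi_0;S) = \lambda$. Letting $\lambda \nearrow \rho$ yields $\sup_{x\in\H} d(x;S) \geq \rho$, a contradiction.

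With this ingredient, openness of $A$ is immediate: if $t_* \in A$, Hausdorff continuity at $t_*$ combined with $C(t_*)=\H$ provides, for any fixed $\epsilon\in ]0,\rho[$, a $\delta > 0$ such that $\sup_{x\in\H} d(x;C(t)) = d_H(C(t),\H) < \epsilon < \rho$ whenever $|t-t_*| < \delta$, so the ingredient applied to $S=C(t)$ yields $C(t)=\H$. For closedness: if $t$ is a limit point of $A$ in $[0,T]$, pick $t_n \in A$ with $t_n \to t$; Hausdorff continuity at $t$ gives $d_H(C(t_n),C(t)) \to 0$, and since $C(t_n)=\H$ for all $n$, the constant $d_H(\H,C(t))$ must equal $0$, which forces $C(t)=\H$ by closedness of $C(t)$.

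I expect the bulk of the work to lie in verifying the geometric ingredient, specifically in unwinding the norm expansion from the prox-regularity inequality; once that computation is written out, the remaining openness and closedness steps follow directly from Hausdorff continuity and the connectedness of $[0,T]$, and no significant obstacles are anticipated.
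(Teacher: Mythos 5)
Your proof is correct and follows essentially the same approach as the paper: both rest on the geometric fact that a proper closed $\rho$-uniformly prox-regular set has points at distance arbitrarily close to $\rho$ from it (the paper invokes the external-ball consequence $C(t_1)\cap\mathbb{B}_\rho(p+\rho\tfrac{x-p}{\|x-p\|})=\emptyset$, while you unwind the defining inequality directly, to the same effect), and both propagate $C(t)=\H$ across the interval via Hausdorff continuity — the paper with an explicit finite chain of points at pairwise Hausdorff distance $<\rho$, you with the equivalent open-and-closed/connectedness argument. The norm-expansion computation you flag as the crux does check out, so there is no gap.
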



	\section{Main Geometric Hypotheses}\label{sec3}
    In this section, we present the main hypotheses that will be assumed throughout the paper concerning the set-valued mapping $C\colon [0,T]\tto \H$.
    \begin{enumerate}[label=$(\textsf{H}_{\arabic*})$]
		\item \label{H1mgh}  For all $t\in [0,T]$, $C(t)$ is nonempty, closed and $\rho$-uniformly prox-regular for some $\rho>0$.
		\item \label{H2mgh}  The set-valued map $C$ is Hausdorff-continuous.
		\item \label{H3mgh}  For all $t\in [0,T]$, $\bd C(t)$ is compact.
		\item \label{H4mgh}  There exist $r,L>0$ such that for all $t\in [0,T]$ and $x\in\bd C(t)$, there exists $z(t,x)\in \H$ such that 
		\begin{equation*}
			\operatorname{co}(\{x\}\cup\mathbb{B}_{2r}(z(t,x)))\subset C(t)\text{ and }\|z(t,x)-x\|\leq Lr.
		\end{equation*} 
		\item \label{H5mgh} There exist $\delta>0$, $L> 0$ such that for all $t\in [0,T]$ and $x\in\bd C(t)$, there exists $\ell_x(t)\in\mathbb{S}$ satisfying
		\begin{equation*}
			\sup_{y\in\mathbb{B}_{\delta}(x)\cap C(t)}\sigma(\ell_x(t);N^P(C(t);y)\cap \mathbb{S})\leq -\frac{1}{L}.
		\end{equation*}
	\end{enumerate}
    The condition \ref{H4mgh} was introduced in \cite{MR1946545} to study the existence and uniqueness of the sweeping process dynamic. Furthermore, in \cite[Definition 3.1]{MR2812587} is introduced a similar notion to \ref{H5mgh} based on \cite[eq. (5), p. 514]{MR745330}, called admissibility, and it is possible to prove that \ref{H5mgh} is actually less restrictive (i.e., admissibility implies \ref{H5mgh}). It is important to remark that condition \ref{H5mgh} can be viewed as a time-dependent version of the geometric condition introduced in \cite[Condition (B), p. 456]{MR873889}.

\begin{proposition}\label{rmk-pr-cpr}
     \ref{H4mgh} holds provided that at least one of the following conditions is satisfied:
    \begin{itemize}
        \item[(i)] The set-valued mapping $C\colon [0,T]\tto \H$ satisfies \ref{H2mgh}, and for all $t\in [0,T]$, the set $C(t)$ is convex with nonempty interior and has bounded boundary.
        \item[(ii)] The Hilbert space $\H$ is finite dimensional and, for all $t\in [0,T]$,  the set $A(t):=\cl(\H\setminus C(t))$ is $\beta$-uniformly prox-regular for some $\beta>0$.
    \end{itemize}
\end{proposition}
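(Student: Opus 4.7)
The plan is to establish (i) and (ii) independently, in each case exhibiting explicit $r$, $L$, and a construction of $z(t,x)$. Case (i) will rely on a compactness argument together with convexity, while case (ii) will rest on an explicit construction derived from the proximal-normal geometry of $A(t)$.

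For (i), since every convex set is $\rho$-uniformly prox-regular for any $\rho > 0$, hypothesis \ref{H1mgh} holds automatically, so both Lemma \ref{lemma_partition} and Lemma \ref{C(t)H-statement} are applicable. By Lemma \ref{C(t)H-statement} the problem reduces to either the trivial case $C(t) = \H$ for all $t$ (where $\bd C(t) = \emptyset$ and \ref{H4mgh} is vacuous), or the case $C(t) \neq \H$ throughout $[0,T]$. In the latter case, for each $t_0 \in [0,T]$ I pick $x_0(t_0) \in \inte C(t_0)$ and $r(t_0)>0$ with $\mathbb{B}_{r(t_0)}(x_0(t_0)) \subset C(t_0)$, and invoke Lemma \ref{lemma_partition} to obtain an open neighborhood $I_{t_0}$ of $t_0$ on which $\mathbb{B}_{r(t_0)/2}(x_0(t_0)) \subset C(t)$. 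Extracting a finite subcover $\{I_{t_j}\}_{j=1}^N$ of $[0,T]$, I set $r := \min_j r(t_j)/4$ and take $z(t,x) := x_0(t_j)$ whenever $t \in I_{t_j}$; convexity of $C(t)$ immediately yields $\co(\{x\} \cup \mathbb{B}_{2r}(z(t,x))) \subset C(t)$. To bound $\|x - z(t,x)\|$ uniformly, I use that in finite dimension a proper closed convex set with nonempty interior and bounded boundary is itself bounded, combined with Hausdorff-continuity of $C$ to bound the diameters uniformly on $[0,T]$, which delivers a uniform constant $L$.

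For (ii), the construction is explicit and tailored to the prox-regular structure of $A(t)$. Given $x \in \bd C(t) \subseteq \bd A(t)$, the finite dimensionality of $\H$ lets me extract a unit proximal normal $\zeta \in N^P(A(t); x) \cap \mathbb{S}$ by projecting a sequence of exterior points $y_n \in \inte C(t) = \H \setminus A(t)$ with $y_n \to x$ and taking a subsequential limit of the normalized difference vectors $(y_n - \proj_{A(t)}(y_n))/\|y_n - \proj_{A(t)}(y_n)\|$. The $\beta$-uniform prox-regularity of $A(t)$ then gives $\mathbb{B}_\beta(x + \beta\zeta) \cap A(t) = \emptyset$, and consequently $\mathbb{B}_\beta(x + \beta\zeta) \subset \H \setminus A(t) \subset C(t)$. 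Choosing $r := \beta/10$, $z(t,x) := x + \beta\zeta$, and $L := 10$, the inclusion $\mathbb{B}_{2r}(z(t,x)) \subset C(t)$ and the norm bound $\|z(t,x) - x\| = \beta = L r$ are immediate.

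The technical heart of (ii), and the main obstacle, is the convex-hull containment $\co(\{x\} \cup \mathbb{B}_{2r}(z(t,x))) \subset C(t)$, which does not follow merely from the tangent-ball property. I argue by contradiction: suppose $p = \lambda x + (1-\lambda)(x + \beta\zeta + w) \in A(t)$ for some $\lambda \in (0,1)$ and some $w$ with $\|w\| < 2r$. The defining inequality of $\beta$-uniform prox-regularity applied at $x$ with normal $\zeta$ and test point $p$ reads $\langle \zeta, p - x\rangle \leq \|p - x\|^2/(2\beta)$. Substituting the explicit form $p - x = (1-\lambda)(\beta\zeta + w)$ and rearranging yields $(1-\lambda) \geq 2\beta(\beta - \|w\|)/(\beta + \|w\|)^2$. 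The quantitative choice $r = \beta/10$—any $r < \beta(\sqrt{5}-2)/2$ would do—ensures that this right-hand side strictly exceeds $1$ uniformly over $\|w\| < 2r$, contradicting $\lambda \in (0,1)$. Hence $p \notin A(t)$, so $p \in \H \setminus A(t) \subset C(t)$, completing the argument.
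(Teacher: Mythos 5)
Your proposal is correct, and while your part (i) follows essentially the paper's own route (Lemma \ref{lemma_partition} plus compactness of $[0,T]$, convexity giving the hull inclusion, and the bounded boundaries supplying the constant $L$), your part (ii) takes a genuinely different path. The paper never works at the point $x$ itself: it invokes Clarke's density theorem to select nearby points $y_\epsilon\in\bd A(t)$ carrying nonzero proximal normals $v_\epsilon$, uses the exterior tangent balls $\mathbb{B}_\gamma\bigl(y_\epsilon+\tfrac{\gamma}{\|v_\epsilon\|}v_\epsilon\bigr)\subset\inte C(t)$ for $\gamma<\beta$ to obtain $\co\bigl(\{y_\epsilon\}\cup\mathbb{B}_{\beta/4}\bigl(y_\epsilon+\tfrac{\beta}{2\|v_\epsilon\|}v_\epsilon\bigr)\bigr)\subset C(t)$, and then lets $y_\epsilon\to x$, shrinking the radius to $\beta/8$ (so $r=\beta/8$, $L=4$). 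You instead manufacture a unit proximal normal at $x$ itself as a subsequential limit of normalized projection directions from interior points of $C(t)$, and establish the convex-hull inclusion at $x$ by a quantitative contradiction with the prox-regularity inequality of $A(t)$ (giving $r=\beta/10$, $L=10$); your computation $(1-\lambda)\geq 2\beta(\beta-\|w\|)/(\beta+\|w\|)^2>1$ for $\|w\|<\beta/5$ is correct. This buys you a fully explicit argument at $x$ that avoids the paper's ``it is then not difficult to see'' limit of convex hulls, at the modest price of needing that the limit $\zeta$ of the normals is again a proximal normal at $x$ (which follows by passing to the limit in the same prox-regularity inequality, and should be said). Two small remarks: in (i) the statement does not assume finite dimension, and the fact you invoke (a proper closed convex set with nonempty interior and bounded boundary is bounded) is true in any Hilbert space, so you should drop that restriction or, alternatively, bound only the boundaries via Proposition \ref{h1h2-bd}; in (ii) the inclusion $\bd C(t)\subseteq\bd A(t)$, i.e.\ that boundary points of $C(t)$ can be approached from $\inte C(t)$, is stated without justification — but the paper's own proof rests on exactly the same fact when it selects $y_\epsilon\in\mathbb{B}_\epsilon(x)\cap\bd A(t)$, so this is not a gap relative to the paper.
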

\begin{proof}
Observe that (i) is a direct consequence of Lemma \ref{lemma_partition} together with an argument of compactness of $[0,T]$. To check (ii), fix any $t\in [0,T]$ and $x\in A(t)$. Given $\epsilon>0$, there exists $y_\epsilon\in \mathbb{B}_{\epsilon}(x)\cap \bd A(t)$ such that $N^P(A(t);y_\epsilon) \neq\{0\}$ (see \cite[Corollary 1.6.2]{Clarke1998}). Take $v_\epsilon\in N^P(A(t);y_\epsilon)\setminus \{0\}$. By assumption, for all $\gamma\in ]0,\beta[$ we have 
		\begin{equation*}
			\mathbb{B}_{\gamma}\left(y_\epsilon+\frac{\gamma}{\|v_\epsilon\|}v_\epsilon\right)\subset \H\setminus A(t) = \inte C(t).
		\end{equation*}
		It follows that $\operatorname{co}\big(\{y_\epsilon\}\cup \mathbb{B}_{\beta/4}\big(y_\epsilon+\frac{\beta}{2\|v_\epsilon\|}v_\epsilon\big)\big)\subset C(t)$. Up to a subsequence, we have $\frac{v_\epsilon}{\|v_\epsilon\|}\to v$ with $\|v\| = 1$ and $y_\epsilon\to x$. It is then not difficult to see that $$\operatorname{co}\left(\{x\}\cup \mathbb{B}_{\beta/8}\left(x+\frac{\beta}{2}v\right)\right)\subset C(t),$$
		which proves (ii) by taking $r = \beta/8$ and $L=4$.
\end{proof}
    \begin{remark} 
    It is worth noting that there exist set-valued mappings with uniformly prox-regular values that do not satisfy assumption \ref{H4mgh}, even when they have nonempty interior. Indeed, consider $C(t)\equiv C$, where 
    $$C:= \{(x,y)\in\R^2 : (x+1)^2 + (y-1)^2\geq 1, x\in [-1,0], y\in [0,1]\}.
    $$
		\begin{figure}[h]
			\centering
			\captionsetup{justification=centering}
			\begin{tikzpicture}
				\filldraw[color=gray!20,draw=black];
				\draw (-2,0) arc (270:360:2cm);
				\draw (-2,0) -- (0,0);
				\draw (0,0) -- (0,2);
				\draw (-0.3,0.5) node[anchor=north east] {$C$};
				\draw (-2,0) node[left] {$(-1,0)$};
				\draw (0,0) node[right] {$(0,0)$};
				\draw (0,2) node[right] {$(0,1)$};
			\end{tikzpicture}
			\caption{A uniformly prox-regular set that does not satisfy neither \ref{H4mgh} nor \ref{H5mgh}.}
			\label{fig:enter-label}
		\end{figure}
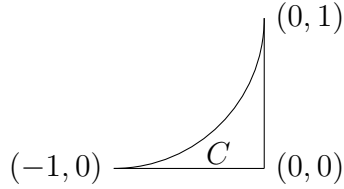
It is clear that $t\tto C(t)$ is Hausdorff-continuous and takes uniformly prox-regular values. However, at corner points it is not possible to find a ball $\mathscr{B}\subset C$ such that $\operatorname{co}(\{x_0\}\cup \mathscr{B})\subset C$ (see Figure \ref{fig:enter-label}). We also note that $C$ does not satisfy condition \ref{H5mgh}. 

\noindent On the other hand, the boundedness assumption in Proposition \ref{rmk-pr-cpr} (i) is indeed necessary: there exist unbounded convex sets with nonempty interior that do not satisfy \ref{H4mgh}. For example, let $A = \{ (x,y,z)\in \mathbb{R}^3 : -1\leq y\leq 1, x=1 ,z\geq 0\}$ and $B = \{ (x,y,z)\in \mathbb{R}^3 : y=0, x+z=0, z\geq 0 \}$. Then, the convex hull $S:= \ov{\text{co}}(A\cup B)$ fails to satisfy \ref{H4mgh}.
	\end{remark}

	\section{Relationships among the Geometric Assumptions}\label{sec4}

    In this section, we examine the relationships among the geometric hypotheses introduced in the previous section. First, we show that the Hausdorff continuity of a set-valued mapping taking uniformly prox-regular values implies the continuity of its boundary. The proof can be found in \ref{appendix2}.
	\begin{proposition}\label{h1h2-bd}
		Suppose that $C\colon [0,T]\tto \H$ is a set-valued mapping satisfying \ref{H1mgh} and  \ref{H2mgh}. Then the set-valued mapping $t\tto \bd C(t)$ is Hausdorff-continuous on its domain.
	\end{proposition}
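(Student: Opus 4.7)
My plan is to establish the quantitative estimate
\[
d_H(\bd C(s), \bd C(t)) \leq 2\, d_H(C(s), C(t))
\]
whenever $d_H(C(s),C(t)) < \rho/2$, from which Hausdorff continuity of $t \mapsto \bd C(t)$ follows at once from \ref{H2mgh}. Before starting, I invoke Lemma \ref{C(t)H-statement} to reduce to the case where $\bd C(t) \neq \emptyset$ for every $t \in [0,T]$; otherwise $C(t) \equiv \H$ and the domain of the boundary map is empty, rendering the claim vacuous.

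Fix $s,t \in [0,T]$ and set $\eta := d_H(C(s),C(t))$. By symmetry it suffices to bound $e(\bd C(t), \bd C(s))$, so fix $x \in \bd C(t)$. If $x \notin C(s)$, I use Proposition \ref{prox_reg_prop}: for $\eta < \rho$ the projection $\proj_{C(s)}(x)$ is single-valued, necessarily lies in $\bd C(s)$ (any projection of an exterior point onto a closed set is a boundary point), and satisfies $\|x - \proj_{C(s)}(x)\| = d(x, C(s)) \leq e(C(t), C(s)) \leq \eta$. The substantive case is $x \in C(s)$, which I handle via a one-parameter construction along an outward proximal normal.

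In this case, I first select a unit proximal normal $v \in N^P(C(t);x) \cap \mathbb{S}$. Its existence at every boundary point of a $\rho$-prox-regular set follows by approximating $x$ from outside: take $x_n \notin C(t)$ with $x_n \to x$, set $p_n := \proj_{C(t)}(x_n)$ and $v_n := (x_n - p_n)/\|x_n-p_n\|$, and pass to a convergent subsequence using the closedness of the proximal normal cone for prox-regular sets. By Proposition \ref{prox_reg_prop} together with the defining prox-regular inequality, $\proj_{C(t)}(x + \alpha v) = x$ for all $\alpha \in [0,\rho)$, hence $d(x + \alpha v, C(t)) = \alpha$. Taking $\alpha := 2\eta < \rho$ and using the standard bound $|d(\cdot,C(s)) - d(\cdot,C(t))| \leq d_H(C(s),C(t))$ gives
\[
d(x + 2\eta v, C(s)) \geq 2\eta - \eta = \eta > 0,
\]
so $x + 2\eta v \notin C(s)$. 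Since $x \in C(s)$, $x + 2\eta v \notin C(s)$, and $C(s)$ is closed, the quantity $\tau^* := \sup\{\tau \in [0, 2\eta] : x + \tau v \in C(s)\}$ belongs to $[0, 2\eta]$, and the point $y := x + \tau^* v$ lies in $\bd C(s)$ with $\|y - x\| = \tau^* \leq 2\eta$.

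Combining the two cases yields $d(x, \bd C(s)) \leq 2\eta$ for every $x \in \bd C(t)$, whence $e(\bd C(t), \bd C(s)) \leq 2\eta$, and symmetry gives the analogous bound for the reverse excess. The main technical obstacle is the non-emptiness of $N^P(C(t); x) \cap \mathbb{S}$ at every boundary point; while this is a classical property of uniformly prox-regular sets, a clean justification relies on the stability of proximal normal cones under limits along prox-regular sets, which is the delicate ingredient in the proof.
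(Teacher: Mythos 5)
Your argument has a genuine gap at the step you yourself flag as delicate: the existence of a unit proximal normal $v\in N^P(C(t);x)\cap\mathbb{S}$ at \emph{every} boundary point. The proposition is stated in an arbitrary Hilbert space $\H$, and in infinite dimensions this existence claim is false, even for convex (hence $\rho$-uniformly prox-regular for every $\rho$) sets. Take $\H=\ell^2$ and $C(t)\equiv K:=\{x\in\ell^2 : x_n\geq 0 \ \forall n\}$; since $\inte K=\emptyset$, every point of $K$ is a boundary point, yet at $\bar x=(2^{-n})_n$ one checks that $\langle v,y-\bar x\rangle\leq 0$ for all $y\in K$ forces $v=0$, so $N^P(K;\bar x)=\{0\}$. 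Your proposed construction breaks exactly where compactness is used: approximating $\bar x$ by $x_n=\bar x-2\cdot 2^{-n}e_n\notin K$ gives unit normals $v_n=-e_n$ at the projections, which have no norm-convergent subsequence and converge weakly to $0$; passing the prox-regularity inequality to a weak limit only yields the trivial conclusion $0\in N^P$. So the case $x\in C(s)$ of your argument collapses in the generality in which the statement is made. (In finite dimensions your argument is correct — there the subsequence extraction works, every boundary point of a prox-regular set does carry a nonzero proximal normal, and you even get the clean quantitative bound $d_H(\bd C(s),\bd C(t))\leq 2\,d_H(C(s),C(t))$ — but that is a strictly weaker setting than the one claimed.)

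The paper's proof is designed precisely to avoid this obstacle: in the case $x\in\bd C(t)\cap\inte C(s)$ it does not seek a normal at $x$, but argues by contradiction and invokes the proximal density theorem (\cite[Corollary 1.6.2]{Clarke1998}), valid in Hilbert space, to find a \emph{nearby} boundary point $\bar x$ of $C(t)$ admitting a nonzero proximal normal; the external-ball property at $\bar x$ then contradicts the inclusion $\mathbb{B}_{\eta+\gamma}(x)\subset C(t)+\eta\mathbb{B}$. To repair your proof in full generality, you would need to replace "choose $v\in N^P(C(t);x)\cap\mathbb{S}$" by such a density argument (choosing a point of $\bd C(t)$ within distance, say, $\eta$ of $x$ with a nonzero proximal normal and running your segment construction from there, at the cost of a slightly worse constant), or else restrict the statement to finite dimensions.
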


The following lemma establishes that property \ref{H4mgh} is preserved under perturbation by continuous functions.
	\begin{lemma}\label{prop-preser-cm}
        Suppose $C\colon [0,T]\tto \H$ satisfies \ref{H4mgh}. For any function $h\colon [0,T]\to \H$, the shifted moving set $C_h := C-h$ also satisfies \ref{H4mgh}.
	\end{lemma}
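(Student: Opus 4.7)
The plan is to exploit the fact that translation is an affine homeomorphism, and therefore commutes with taking boundaries, balls, and convex hulls. Since $(\textsf{H}_4)$ is a purely geometric condition on the sets $C(t)$ and their boundaries, shifting by an arbitrary vector $h(t)$ should simply shift the witnessing points $z(t,x)$ by the same vector, leaving the constants $r$ and $L$ unchanged.

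Concretely, I would proceed as follows. First I would observe that for every $t\in[0,T]$, $\bd C_h(t)=\bd C(t)-h(t)$, because $y\mapsto y+h(t)$ is a homeomorphism of $\H$. Then, given $y\in\bd C_h(t)$, I would set $x:=y+h(t)\in \bd C(t)$, apply $(\textsf{H}_4)$ to $x$ to obtain a point $z(t,x)\in\H$ satisfying
\begin{equation*}
\operatorname{co}(\{x\}\cup\mathbb{B}_{2r}(z(t,x)))\subset C(t)\quad\text{and}\quad\|z(t,x)-x\|\leq Lr,
\end{equation*}
and define the candidate for the shifted setting by $z_h(t,y):=z(t,x)-h(t)$.

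The last step is to verify that this choice works with the same constants $r$ and $L$. The bound on the distance is immediate, since
\begin{equation*}
\|z_h(t,y)-y\|=\|(z(t,x)-h(t))-(x-h(t))\|=\|z(t,x)-x\|\leq Lr.
\end{equation*}
For the inclusion, I would use that balls and convex hulls commute with translations: $\mathbb{B}_{2r}(z_h(t,y))=\mathbb{B}_{2r}(z(t,x))-h(t)$, and consequently
\begin{equation*}
\operatorname{co}(\{y\}\cup\mathbb{B}_{2r}(z_h(t,y)))=\operatorname{co}(\{x\}\cup\mathbb{B}_{2r}(z(t,x)))-h(t)\subset C(t)-h(t)=C_h(t),
\end{equation*}
which yields $(\textsf{H}_4)$ for $C_h$ with the same constants $r,L>0$.

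There is no real obstacle here: the argument is a routine verification that $(\textsf{H}_4)$ is invariant under pointwise translation of the moving set, and no regularity of $h$ is used whatsoever. The only thing to be careful about is confirming that the boundary itself translates correctly, which follows from the elementary fact that translations are homeomorphisms.
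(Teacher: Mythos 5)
Your proof is correct and takes essentially the same approach as the paper: shift the boundary point into $\bd C(t)$, apply $(\textsf{H}_4)$ there, translate the witness $z$ back by $-h(t)$, and note that translation preserves distances, balls, and convex hulls so the same constants $r,L$ work. The paper's proof is a condensed version of exactly this argument.
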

	\begin{proof}
	Take $r,L>0$ as given by \ref{H4mgh}. If $x\in \bd C_h(t)$, then $x+h(t)\in\bd C(t)$. Hence, there exists $z\in\H$ such that $\|z-x-h(t)\|\leq rL$ and $$\operatorname{co}(\{x+h(t)\}\cup \mathbb{B}_{2r}(z))\subset C(t).$$ Setting $z':= z-h(t)$, we obtain $\|z'-x\|\leq rL$ and $\operatorname{co}(\{x\}\cup\mathbb{B}_{2r}(z'))\subset C_h(t)$, which completes the proof.
	\end{proof}
    The next result establishes that in general the condition \ref{H4mgh} is weaker than \ref{H5mgh}. 
    
    \begin{proposition}\label{H4-is-weaker-than-H5}
        Let us consider a moving set $C\colon [0,T]\tto \H$ satisfying \ref{H1mgh}, \ref{H2mgh} and \ref{H5mgh}. Then, \ref{H4mgh} holds.
    \end{proposition}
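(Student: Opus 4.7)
The plan is, for each $t\in[0,T]$ and $x\in \bd C(t)$, to take $z(t,x) := x + \alpha\, \ell_x(t)$ with $\alpha>0$ a suitably small constant depending only on $\rho, L, \delta$ (and therefore uniform in $t,x$), and then to prove
$$\bigcup_{s\in[0,\alpha]} \mathbb{B}_{s/(2L)}\bigl(x+s\,\ell_x(t)\bigr)\subset C(t).$$
A direct calculation shows this union equals $\operatorname{co}(\{x\}\cup \mathbb{B}_{\alpha/(2L)}(z(t,x)))$, so setting $r:=\alpha/(4L)$ and taking $L':=4L$ as the constants in \ref{H4mgh} will complete the argument (note that $\|z(t,x)-x\|=\alpha = L'r$).

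The core will be a contradiction argument via metric projection. Fix $s\in(0,\alpha]$, $w\in \mathbb{B}_{s/(2L)}(x+s\ell_x(t))$, and put $K:=1+1/(2L)$, so that $\|w-x\|\leq sK$. Choosing $\alpha$ with $\alpha K<\rho/2$ places $w$ in $U_\rho^{1/2}(C(t))$, hence by Proposition \ref{prox_reg_prop} the projection $y:=\proj_{C(t)}(w)$ is well-defined. Assuming $w\neq y$, the vector $\zeta := (w-y)/\|w-y\|$ belongs to $N^P(C(t);y)\cap \mathbb{S}$, and the triangle inequality yields $\|y-x\|\leq 2sK$. Requiring also $2\alpha K\leq \delta$ then ensures $y\in \mathbb{B}_\delta(x)\cap C(t)$, so \ref{H5mgh} applied at $x$ gives $\langle \ell_x(t),\zeta\rangle\leq -1/L$.

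The proof then expands $\|w-y\|^2$ through the identity $w-y=(w-x-s\ell_x(t))+(x-y)+s\ell_x(t)$ and estimates the three resulting inner products using, respectively: Cauchy--Schwarz and the bound $\|w-x-s\ell_x(t)\|\leq s/(2L)$; the prox-regularity inequality from \ref{H1mgh} in the form $\langle \zeta,x-y\rangle\leq \|x-y\|^2/(2\rho)$ (valid since $x\in C(t)$ and $\|\zeta\|=1$); and the bound $\langle \zeta,\ell_x(t)\rangle\leq -1/L$ just obtained. Dividing by $\|w-y\|>0$ produces
$$\|w-y\|\leq \frac{\|x-y\|^2}{2\rho}-\frac{s}{2L}\leq \frac{2s^2K^2}{\rho}-\frac{s}{2L},$$
which is strictly negative for every $s\leq \alpha$ whenever $\alpha<\rho/(4LK^2)$, yielding the desired contradiction. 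Taking $\alpha := \tfrac{1}{2}\min\{\rho/(2K),\,\delta/(2K),\,\rho/(4LK^2)\}$ satisfies all three smallness conditions simultaneously.

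The principal obstacle is the bookkeeping: the three constraints on $\alpha$ — sufficient proximity of $w$ to $C(t)$ for the projection to be single-valued, sufficient proximity of $y$ to $x$ so that \ref{H5mgh} can be invoked at $x$, and the correct sign in the final estimate — must be made compatible with a single constant depending only on the data $\rho,L,\delta$ of the hypotheses \ref{H1mgh} and \ref{H5mgh}. Notably, hypothesis \ref{H2mgh} plays no role in this particular implication and could in principle be dropped from the statement.
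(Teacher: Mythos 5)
Your proof is correct and takes essentially the same route as the paper's: set $z(t,x)=x+\alpha\ell_x(t)$, parametrize the convex hull, project a hypothetical exterior point onto $C(t)$, and combine the \ref{H5mgh} estimate with the prox-regularity inequality to reach a contradiction, only with slightly different bookkeeping of the constants. Your closing remark that \ref{H2mgh} is not used is also accurate: the paper's proof mentions \ref{H2mgh} when invoking Proposition \ref{prox_reg_prop}, but that proposition concerns the fixed set $C(t)$ alone, so \ref{H1mgh} already suffices.
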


    \begin{proof} Suppose that \ref{H5mgh} holds with constants $\delta, L>0$. Take any $r_0\in ]0,\gamma[$ where
    \[
\gamma:=
\min\left\{
\tfrac{\rho}{1+\frac{1}{4L}},
\;
\tfrac{\delta}{2\left(1+\frac{1}{4L}\right)},
\;
\tfrac{3\rho}{8L\left(1+\frac{1}{4L}\right)^2}
\right\},
\]
and define
\[
\tilde r=\frac{r_0}{8L},
\qquad
\tilde L=8L.
\]
We aim to prove that \ref{H4mgh} holds with constants $\tilde{r}, \tilde{L}$. Indeed, fix $t\in [0,T]$ and $x\in \bd C(t)$, then, in particular there is $\ell_x(t)\in \mathbb{S}$ such that
\begin{equation}\label{propH5-part} 
    \forall y\in \B_\delta(x)\cap C(t),\quad
\forall n\in N^P(C(t);y)\cap \mathbb{S},
\quad \langle n,\ell_x(t)\rangle \le -\frac{1}{L}.
\end{equation}
It is enough to prove that
\begin{equation}\label{inclusionH4}
    \overline{\text{co}}(\{x\}\cup \mathbb{B}_{2\tilde{r}}(z(t,x)))\subset C(t)
\end{equation}
where $z(t,x) := x+r_0\ell_x(t)$. Note that $\|z(t,x)-x\| = r_0 = \tilde{r}\tilde{L}$. Suppose that \eqref{inclusionH4} does not hold, then there is $u\in\text{co}(\{x\}\cup \mathbb{B}_{2\tilde{r}}(z(t,x)))$ such that $u\notin C(t)$. Note that there are $\lambda\in ]0,1]$ and $v\in 2\tilde{r}\mathbb{B}$ such that $$u = x+\lambda r_0\ell_x(t) + \lambda v.$$
Since $u\notin C(t)$ we have $d:=d(u;C(t))>0$. Moreover,
\begin{equation*}
    d\leq \|u-x\| = \|\lambda r_0\ell_x(t) + \lambda v\|\leq \lambda(r_0 + 2\tilde{r}) = \lambda\left(1+\frac{1}{4L}\right)r_0<\rho.
\end{equation*}
Then, by \ref{H2mgh} and Proposition \ref{prox_reg_prop} we have $p:=\text{proj}_{C(t)}(u)$ is well-defined. Note that $d = \|u-p\|>0$. Moreover,
\begin{equation*}
    \|p-x\|\leq \|p-u\| + \|u-x\| = d+\|u-x\|\leq 2\|u-x\|\leq 2\lambda\left(1+\frac{1}{4L}\right)r_0<\delta.
\end{equation*}
It follows that $p\in \mathbb{B}_\delta(x)$. Thus, we consider $n:= \frac{u-p}{\|u-p\|}$, and it is clear that $n\in N^P(C(t);p)$, then by \eqref{propH5-part} we have $\langle n,\ell_x(t)\rangle\leq \frac{-1}{L}$.
The uniform prox-regularity implies that 
\begin{equation}\label{bound1-H4}
    \langle n,x-p\rangle\leq \frac{1}{2\rho}\|x-p\|^2\leq \frac{2\lambda^2}{\rho}\left(1+\frac{1}{4L}\right)^2r_0^2<\frac{3\lambda^2 r_0}{4L}
\end{equation}
On the other hand,
\begin{equation}\label{bound2-H4}
    \begin{aligned}
    \langle n,x-p\rangle = \langle n,x-u\rangle + \langle n, u-p\rangle&= \langle n,-\lambda r_0\ell_x(t) - \lambda v\rangle + d\\
    &\geq \frac{\lambda r_0}{L} - \lambda\langle n,v\rangle + d\\
    &\geq \frac{\lambda r_0}{L} - \lambda\frac{r_0}{4L} + d = \frac{3\lambda r_0}{4L} + d.
    \end{aligned}
\end{equation}
From \eqref{bound1-H4} and \eqref{bound2-H4}, it follows that $d< \frac{3\lambda^2 r_0}{4L}-\frac{3\lambda r_0}{4L} = \frac{3\lambda r_0}{4L}(\lambda-1)\leq 0$ which contradicts that $d>0$. It follows that \eqref{inclusionH4} holds, concluding the desired.
    \end{proof}

	Now, we are going to study the converse of Proposition \ref{H4-is-weaker-than-H5}.  The following lemmas will be useful in this line.

\begin{lemma}\label{lemma_col_mon_1}
		Let $C$ be a closed set and $x\in\bd C$. Suppose there exist $r>0$, $L>0$, and $z\in\H$ such that $\operatorname{co}(\{x\}\cup \mathbb{B}_{2r}(z))\subset C$ and $\|z-x\|\leq Lr$. Then,
		\begin{equation*}
	 \left\langle v,\frac{z-x}{\|z-x\|} \right\rangle\leq -\frac{\|v\|}{L} \quad \textrm{ for all } v\in N^P(C;x).
		\end{equation*}
	\end{lemma}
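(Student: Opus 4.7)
The plan is to exploit the defining inequality of the proximal normal cone along directions that enter the ball $\mathbb{B}_{2r}(z)\subset C$. The claim is trivial when $v=0$, so I assume $v\neq 0$. I also note that $\|z-x\|>0$, since otherwise $\mathbb{B}_{2r}(x)\subset C$ would contradict $x\in\bd C$; hence $(z-x)/\|z-x\|$ is well defined.

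By the definition of $v\in N^P(C;x)=\partial_P I_C(x)$, there exist $\sigma\geq 0$ and $\eta>0$ such that
$$\langle v,y-x\rangle \leq \sigma\|y-x\|^2 \quad \text{for all } y\in C\cap \mathbb{B}_\eta(x).$$
Given any $w\in \mathbb{B}_{2r}(z)$, the segment $[x,w]$ lies in $\operatorname{co}(\{x\}\cup \mathbb{B}_{2r}(z))\subset C$. Hence for $t>0$ sufficiently small $y_t:=x+t(w-x)\in C\cap \mathbb{B}_\eta(x)$, and plugging into the inequality, dividing by $t$, and letting $t\to 0^+$ yields
$$\langle v,w-x\rangle \leq 0 \quad \text{for every } w\in \mathbb{B}_{2r}(z).$$

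Taking the supremum over $w\in \mathbb{B}_{2r}(z)$ of the linear functional $w\mapsto \langle v,w-x\rangle$ gives $\langle v,z-x\rangle+2r\|v\|\leq 0$. Dividing by $\|z-x\|>0$ and using the hypothesis $\|z-x\|\leq Lr$,
$$\left\langle v,\frac{z-x}{\|z-x\|}\right\rangle \;\leq\; -\frac{2r\|v\|}{\|z-x\|} \;\leq\; -\frac{2\|v\|}{L} \;\leq\; -\frac{\|v\|}{L},$$
which is the desired bound (actually with a factor of $2$ to spare). No serious obstacle is anticipated: the argument uses only the defining inequality of the proximal normal cone together with the convex-hull hypothesis $\operatorname{co}(\{x\}\cup \mathbb{B}_{2r}(z))\subset C$, and neither prox-regularity of $C$ nor any time dependence is invoked.
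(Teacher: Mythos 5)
Your proof is correct and follows essentially the same route as the paper: both reduce to showing that $\langle v, w - x\rangle \le 0$ for all $w$ in the convex hull (the paper by noting $v\in N^P(A;x)$ for the convex set $A=\overline{\operatorname{co}}(\{x\}\cup\mathbb{B}_{2r}(z))$ and invoking the linear normal-cone inequality for convex sets, you by deriving that linear inequality directly from the proximal subdifferential definition via a segment/limit argument), and then divide by $\|z-x\|\le Lr$. A small cosmetic difference is that by taking the supremum over the full ball $\mathbb{B}_{2r}(z)$ you obtain the sharper bound $-\tfrac{2\|v\|}{L}$, whereas the paper picks the single test point $z+\tfrac{r}{\|v\|}v$ and lands on $-\tfrac{\|v\|}{L}$; both suffice for the stated conclusion.
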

	\begin{proof}
		Take $v\in N^P(C;x)$. If $v=0$, then the inequality holds trivially. Assume $v\neq 0$. Since $A:=\overline{\operatorname{co}}(\{x\}\cup \mathbb{B}_{2r}(z))\subset C$, we have $v\in N^P(A;x)$. As $A$ is convex and $z+\frac{r}{\|v\|}v\in A$, it follows that
		\begin{equation*}
			\left\langle v,z+ \frac{r}{\|v\|}v - x \right\rangle\leq 0 \quad \Rightarrow \quad r\|v\| + \langle v,z-x\rangle\leq 0.
		\end{equation*}
		Since $x\in\bd C$, we have $z\neq x$, and moreover $\|x-z\|\leq Lr$. Dividing the inequality above by $\|x-z\|$ yields the desired estimate, completing the proof.
	\end{proof}
	
	We recall that if $x\in C$, then $\partial_P d_C(x)\cap \mathbb{S} = N^P(C;x)\cap \mathbb{S}$ (see \eqref{normal-distancia}). The next lemma establishes the upper semicontinuity of the map $(t,x)\mapsto \sigma(\xi;\partial_P d_{C(t)}(x)\cap \mathbb{S})$ for all $\xi\in \H$ (a similar result can be found in \cite[Lemma 2]{MR4822735}). 
	\begin{lemma}\label{upper-semi-support}
	Suppose that $\H$ is finite dimensional, and let $C\colon [0,T]\tto \H$ be a set-valued mapping satisfying \ref{H1mgh} and \ref{H2mgh}. Then, for all $\ov \xi\in \H$, $\ov t\in [0,T]$, and $\ov x\in C(\ov t)$,
		\begin{equation*}
			\limsup_{\substack{x\to \ov x, t\to \ov t\\ \xi\to \ov \xi}} \sigma(\xi;\partial_P d_{C(t)}(x)\cap \mathbb{S}) \leq \sigma(\ov \xi;\partial_P d_{C(\ov t)}(\ov x)\cap \mathbb{S}).
		\end{equation*}
	\end{lemma}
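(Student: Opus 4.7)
The plan is to argue sequentially, using the finite-dimensional compactness of the unit sphere together with the hypomonotonicity characterization of proximal normals inherited from $\rho$-uniform prox-regularity. Let $\ell$ denote the left-hand limsup and pick a sequence $(t_n,x_n,\xi_n)\to(\bar t,\bar x,\bar\xi)$ realizing it. If $\partial_P d_{C(t_n)}(x_n)\cap\mathbb{S}=\emptyset$ for all large $n$, then $\sigma(\xi_n;\emptyset)=-\infty$ and the conclusion is immediate; otherwise, select approximate maximizers $v_n\in\partial_P d_{C(t_n)}(x_n)\cap\mathbb{S}$ with $\langle\xi_n,v_n\rangle\ge\sigma(\xi_n;\partial_P d_{C(t_n)}(x_n)\cap\mathbb{S})-1/n$. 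Since $\H$ is finite-dimensional, the unit sphere is compact, so after passing to a subsequence $v_n\to v$ with $\|v\|=1$.

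Next, I would identify each $v_n$ with a unit proximal normal at a base point $p_n\in C(t_n)$ converging to $\bar x$. The $1$-Lipschitz property of $d(\,\cdot\,;C(t_n))$ together with $\bar x\in C(\bar t)$ and \ref{H2mgh} yields
\[
d(x_n;C(t_n))\le\|x_n-\bar x\|+d_H(C(t_n),C(\bar t))\longrightarrow 0,
\]
so for large $n$ the projection $p_n:=\operatorname{proj}_{C(t_n)}(x_n)$ is well-defined by Proposition \ref{prox_reg_prop}, and $p_n\to\bar x$. If $x_n\in C(t_n)$, then $p_n=x_n$ and by \eqref{normal-distancia} one has $v_n\in N^P(C(t_n);x_n)\cap\mathbb{S}$; if $x_n\notin C(t_n)$, then $x_n\in U_\rho^\gamma(C(t_n))$ for $n$ large, and the $C^{1,1}$ regularity of $d_{C(t_n)}$ on that enlargement forces $\partial_P d_{C(t_n)}(x_n)=\{(x_n-p_n)/\|x_n-p_n\|\}$, which is a unit proximal normal at $p_n$. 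In either case, $v_n\in N^P(C(t_n);p_n)\cap\mathbb{S}$.

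With this reduction, the $\rho$-uniform prox-regularity of $C(t_n)$ gives
\[
\langle v_n,y-p_n\rangle\le\frac{1}{2\rho}\|y-p_n\|^2\quad\text{for all } y\in C(t_n).
\]
For arbitrary $\bar y\in C(\bar t)$, Hausdorff continuity supplies $y_n\in C(t_n)$ with $y_n\to\bar y$, since $d(\bar y;C(t_n))\le d_H(C(\bar t),C(t_n))\to 0$. Passing to the limit in the displayed inequality yields $\langle v,\bar y-\bar x\rangle\le(2\rho)^{-1}\|\bar y-\bar x\|^2$ for every $\bar y\in C(\bar t)$. Since $\|v\|=1$, this is exactly the hypomonotonicity inequality characterizing proximal normals in a $\rho$-prox-regular set (it entails $v\in\partial_P I_{C(\bar t)}(\bar x)=N^P(C(\bar t);\bar x)$ via the definition of the proximal subdifferential), hence $v\in\partial_P d_{C(\bar t)}(\bar x)\cap\mathbb{S}$ by \eqref{normal-distancia}. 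We conclude $\ell=\lim_n\langle\xi_n,v_n\rangle=\langle\bar\xi,v\rangle\le\sigma(\bar\xi;\partial_P d_{C(\bar t)}(\bar x)\cap\mathbb{S})$, which is the desired inequality.

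The main obstacle is the unified identification of $v_n$ as a proximal normal at a single base point $p_n\to\bar x$: the two cases $x_n\in C(t_n)$ and $x_n\notin C(t_n)$ must both be absorbed into the same limit argument, which is why we introduce $p_n$ and exploit the $C^{1,1}$ regularity of the distance function on the $\rho$-enlargement. Once this is done, the rest is a routine passage to the limit relying only on the hypomonotonicity characterization of proximal normals in a prox-regular set and on the Hausdorff continuity of $C$ used to move test points into each $C(t_n)$.
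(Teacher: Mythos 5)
Your proof is correct, and it reaches the key inclusion $v\in N^P(C(\bar t);\bar x)$ by a somewhat different mechanism than the paper. The paper works directly with the subgradients $v_n\in\partial_P d_{C(t_n)}(x_n)\cap\mathbb{S}$ at the (possibly out-of-set) points $x_n$ and invokes an external estimate for proximal subgradients of the distance function on an enlargement of a prox-regular set, namely
\[
\langle v_n,y-x_n\rangle\le \tfrac{2}{\rho}\|y-x_n\|^2+d_{C(t_n)}(y)-d_{C(t_n)}(x_n),
\]
then bounds $d_{C(t_n)}(y)\le d_H(C(t_n),C(\bar t))$ for $y\in C(\bar t)$ and passes to the limit. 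You instead re-base each $v_n$ at the projection $p_n=\operatorname{proj}_{C(t_n)}(x_n)$, using the dichotomy $x_n\in C(t_n)$ versus $x_n\notin C(t_n)$ together with differentiability of $d_{C(t_n)}$ on the tube (so that $v_n=(x_n-p_n)/\|x_n-p_n\|\in N^P(C(t_n);p_n)$), apply the defining hypomonotonicity of $\rho$-prox-regularity at $p_n$, and move test points $\bar y\in C(\bar t)$ into $C(t_n)$ via Hausdorff continuity. This buys a more self-contained argument (only Proposition \ref{prox_reg_prop}, the definition of prox-regularity, and the standard differentiability of $d_S$ on the tube, which you should cite or justify since the paper does not state it), at the cost of the case analysis; the paper's route is shorter but leans on the quoted theorem. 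Two cosmetic points: your use of approximate maximizers neatly sidesteps the attainment issue (and handles possible emptiness of $\partial_P d_{C(t_n)}(x_n)\cap\mathbb{S}$ more carefully than the paper), but then the final line should read $\ell\le\lim_n\langle\xi_n,v_n\rangle$ rather than equality; and note you only need the inclusion $\partial_P d_{C(t_n)}(x_n)\subseteq\{\nabla d_{C(t_n)}(x_n)\}$, i.e.\ differentiability, not full $C^{1,1}$ regularity, since $v_n$ is assumed to exist.
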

    \begin{proof}
        See \ref{appendix4}.
    \end{proof}

The following proposition provides a sufficient condition for \ref{H5mgh} to hold.
\begin{proposition}\label{lemma-H5-suficiente}
    Suppose that $\H$  is finite-dimensional. Let $C\colon [0,T]\tto \H$ be a set-valued mapping  satisfying \ref{H1mgh}, \ref{H2mgh}, and \ref{H3mgh}. Assume there exists $\gamma>0$ such that, for all $t\in [0,T]$ and $x\in \bd C(t)$, there is $\ell\in \mathbb{S}$ with
    \begin{equation}\label{eqn-support-lemma}
        \sigma(\ell;N^P(C(t);x)\cap \mathbb{S})\leq -\gamma.
    \end{equation}
    Then \ref{H5mgh} holds.
\end{proposition}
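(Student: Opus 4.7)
My plan is to upgrade the pointwise estimate \eqref{eqn-support-lemma} to the uniform local bound required by \ref{H5mgh} by combining Lemma \ref{upper-semi-support} with a compactness argument on the graph of the boundary map. The target constant is $L:=2/\gamma$, so that $-1/L=-\gamma/2$.

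First I would introduce the \emph{boundary graph}
\[
\Omega:=\{(t,x)\in[0,T]\times\H : x\in \bd C(t)\},
\]
and prove that $\Omega$ is compact in $[0,T]\times\H$. By Proposition \ref{h1h2-bd}, the set-valued map $t\mapsto \bd C(t)$ is Hausdorff-continuous on $[0,T]$ and, by \ref{H3mgh}, each $\bd C(t)$ is compact. Continuity on the compact interval $[0,T]$ yields a uniform bound $\bd C(t)\subset\mathbb{B}_R[0]$ for some $R>0$ and every $t$. Moreover, if $(t_n,x_n)\in\Omega$ with $(t_n,x_n)\to(t_\infty,x_\infty)$, then
\[
d(x_\infty;\bd C(t_\infty))\le \|x_\infty-x_n\|+d_H(\bd C(t_n),\bd C(t_\infty))\longrightarrow 0,
\]
so $x_\infty\in \bd C(t_\infty)$, showing that $\Omega$ is closed. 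Together with the finite-dimensionality of $\H$, this gives the compactness of $\Omega$.

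Next, for each $(t,x)\in\Omega$ I would use \eqref{eqn-support-lemma} to select $\ell(t,x)\in\mathbb{S}$ with $\sigma(\ell(t,x);N^P(C(t);x)\cap\mathbb{S})\leq -\gamma$. Applying Lemma \ref{upper-semi-support} at the point $(\ov t,\ov x,\ov \xi)=(t,x,\ell(t,x))$ with tolerance $\gamma/2$ produces an open neighborhood $V(t,x)\subset[0,T]\times\H$ of $(t,x)$ such that for every $(s,y)\in V(t,x)$,
\[
\sigma(\ell(t,x);\partial_P d_{C(s)}(y)\cap\mathbb{S})\leq -\gamma/2.
\]
A finite subcover $V_1=V(t_1,x_1),\ldots,V_N=V(t_N,x_N)$ of $\Omega$, together with a standard Lebesgue-number argument, yields $\delta>0$ such that for every $(t,x)\in\Omega$ the ball of radius $\delta$ around $(t,x)$ in $[0,T]\times\H$ is contained in some $V_i$. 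Setting $\ell_x(t):=\ell(t_i,x_i)$ and observing that $y\in\mathbb{B}_\delta(x)\cap C(t)$ implies both $(t,y)\in V_i$ and $y\in C(t)$, the identity \eqref{normal-distancia} gives
\[
\sigma(\ell_x(t);N^P(C(t);y)\cap\mathbb{S})\leq -\gamma/2 = -\tfrac{1}{L},
\]
which is precisely \ref{H5mgh} with constants $\delta$ and $L=2/\gamma$.

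The main obstacle is the compactness of the boundary graph $\Omega$, since \ref{H3mgh} only guarantees compactness of the individual slices $\bd C(t)$; the uniformity in $t$ rests on Proposition \ref{h1h2-bd} and the finite-dimensionality of $\H$. A secondary technical point is to invoke Lemma \ref{upper-semi-support} only at points $y\in C(s)$, so that the identification $\partial_P d_{C(s)}(y)\cap\mathbb{S}=N^P(C(s);y)\cap\mathbb{S}$ via \eqref{normal-distancia} is available; this matches exactly the range of $y$ appearing in \ref{H5mgh}.
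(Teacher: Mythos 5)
Your argument is correct, and it rests on exactly the same ingredients as the paper's proof --- Proposition \ref{h1h2-bd} together with \ref{H3mgh} for compactness of the boundaries, Lemma \ref{upper-semi-support} for the upper semicontinuity, and the identity \eqref{normal-distancia} to pass from proximal subgradients of the distance back to unit proximal normals --- but it is packaged differently: the paper argues by contradiction, extracting sequences $t_n\to\bar t$, $x_n\to\bar x\in\bd C(\bar t)$ and contradicting the semicontinuity bound at $(\bar t,\bar x)$, whereas you give a direct proof by covering the compact boundary graph $\Omega$ with the neighborhoods furnished by Lemma \ref{upper-semi-support} and extracting a uniform radius via a Lebesgue-number argument. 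Your route makes the constants explicit ($L=2/\gamma$ and a concrete $\delta$ coming from the finite subcover), at the price of having to establish compactness of $\Omega$, which the paper only uses implicitly through sequential compactness. The one point you should add is the degenerate case $\bd C(t)=\emptyset$: the paper first invokes Lemma \ref{C(t)H-statement} to reduce to the situation where $C(t)\neq\H$, hence $\bd C(t)\neq\emptyset$, for every $t$. Your compactness argument tacitly assumes this --- Proposition \ref{h1h2-bd} gives Hausdorff continuity of $t\tto\bd C(t)$ only on its domain, and your closedness estimate $d(x_\infty;\bd C(t_\infty))\le\|x_\infty-x_n\|+d_H(\bd C(t_n),\bd C(t_\infty))$ is not meaningful if $\bd C(t_\infty)=\emptyset$ --- so either dispose at the outset of the case $C(t_0)=\H$ for some $t_0$ (then $C(t)=\H$ for all $t$ by Lemma \ref{C(t)H-statement} and \ref{H5mgh} holds vacuously), or observe that by the same lemma nonemptiness of one boundary forces nonemptiness of all. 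With that one-line addition your proof is complete.
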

\begin{proof}
If for some $\bar t\in [0,T]$, $C(\bar t) = \H$, then $C(t) = \H, \forall t\in [0,T]$ (see Lemma \ref{C(t)H-statement}), then in this case \ref{H5mgh} holds trivially. From now on, we assume that $\forall t\in [0,T] : C(t)\neq \H$ and therefore $\forall t\in [0,T] : \bd C(t)\neq \emptyset$. Suppose, on the contrary, that \ref{H5mgh} does not hold. Then, for every $n\in\N$, there exist $t_n\in [0,T]$ and $x_n\in \bd C(t_n)$ such that, for all $\ell\in \mathbb{S}$, there is $y_n^{\ell}\in \mathbb{B}_{1/n}(x_n)\cap C(t_n)$, satisfying
		\begin{equation*}
\sigma(\ell;N^P(C(t_n);y_n^{\ell})\cap\mathbb{S})\geq -\frac{\gamma}{2}.
		\end{equation*}
Passing to a subsequence, we obtain $t_n\to \bar t$ and $x_n\to \bar x\in \bd C(\bar t)$, since the mapping $t\tto \bd C(t)$ is Hausdorff continuous on $[0,T]$ with compact values (see Proposition \ref{h1h2-bd}). Then, by \eqref{eqn-support-lemma}, there exists $\bar\ell\in \mathbb{S}$ such that 
		\begin{equation*}
			\sigma(\bar\ell;N^P(C(\bar t);\bar x)\cap\mathbb{S})\leq -\gamma.    
		\end{equation*} 
Moreover, we have  $y_n^{\bar\ell}\to \bar x$. Using the  above inequality together with Lemma \ref{upper-semi-support}, we obtain 
		\begin{align*}
				-\frac{\gamma}{2} &\leq \liminf_{n\to\infty}\sigma(\bar\ell; \partial_P d_{C(t_n)}(y_n^{\bar \ell})\cap\mathbb{S})\\
				&\leq \limsup_{y\to \bar x, s\to \bar t} \sigma(\bar\ell;\partial_P d_{C(s)}(y)\cap \mathbb{S})\\
				&\leq \sigma(\bar\ell;\partial_P d_{C(\bar t)}(\bar x)\cap\mathbb{S})\\
                &= \sigma(\bar\ell;N^P(C(\bar t);\bar x)\cap\mathbb{S})\\
                &\leq -\gamma.
		\end{align*} 
        This yields a contradiction.
\end{proof}

The following proposition shows that \ref{H4mgh} implies \ref{H5mgh} as a direct consequence of the preceding criterion.
	\begin{proposition}\label{prop_col_lions}
		Suppose that $\H$ has finite dimension. Consider a set-valued mapping $C\colon [0,T]\tto \H$ satisfying \ref{H1mgh}, \ref{H2mgh}, \ref{H3mgh}, and \ref{H4mgh}. Then \ref{H5mgh} holds.
	\end{proposition}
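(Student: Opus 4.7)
The plan is to apply Proposition \ref{lemma-H5-suficiente}, whose hypothesis asks for a uniform constant $\gamma>0$ such that, for every $t\in[0,T]$ and every boundary point $x\in\bd C(t)$, some unit vector $\ell$ satisfies $\sigma(\ell;N^P(C(t);x)\cap \mathbb{S})\le -\gamma$. Since \ref{H1mgh}, \ref{H2mgh}, \ref{H3mgh} are assumed, only this pointwise condition needs to be verified, and \ref{H4mgh} is tailor-made to supply it.

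Concretely, I would fix $t\in[0,T]$ and $x\in\bd C(t)$, and invoke \ref{H4mgh} to get constants $r,L>0$ (independent of $t,x$) and a point $z(t,x)\in\H$ with $\operatorname{co}(\{x\}\cup \mathbb{B}_{2r}(z(t,x)))\subset C(t)$ and $\|z(t,x)-x\|\le Lr$. Since $x\in\bd C(t)$, the vector $z(t,x)-x$ is nonzero, so I can define
\[
\ell_{x}(t):=\frac{z(t,x)-x}{\|z(t,x)-x\|}\in\mathbb{S}.
\]
Lemma \ref{lemma_col_mon_1} applied to the pair $(x,z(t,x))$ yields
\[
\langle v,\ell_{x}(t)\rangle \le -\frac{\|v\|}{L}\qquad \text{for all } v\in N^P(C(t);x).
\]
In particular, for every $v\in N^P(C(t);x)\cap \mathbb{S}$ we get $\langle \ell_{x}(t),v\rangle\le -1/L$, whence
\[
\sigma\bigl(\ell_{x}(t);N^P(C(t);x)\cap \mathbb{S}\bigr)\le -\frac{1}{L}.
\]

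This is exactly \eqref{eqn-support-lemma} with $\gamma=1/L$, uniform in $t\in[0,T]$ and $x\in\bd C(t)$. Proposition \ref{lemma-H5-suficiente} then upgrades this pointwise estimate into the local neighborhood estimate required by \ref{H5mgh}, via the upper semicontinuity result of Lemma \ref{upper-semi-support} and the compactness of $\bd C(t)$ supplied by \ref{H3mgh}. Thus \ref{H5mgh} holds, completing the proof.

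There is essentially no serious obstacle here: all technical work has already been carried out in Lemma \ref{lemma_col_mon_1} and Proposition \ref{lemma-H5-suficiente}; the only task is to recognize that the direction $\ell_x(t)$ produced by the inner ball condition \ref{H4mgh} is precisely the vector needed to feed into the sufficient condition, and that the constant $1/L$ is uniform in $(t,x)$.
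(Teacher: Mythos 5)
Your proof is correct and follows exactly the paper's own argument: invoke \ref{H4mgh} to produce the direction $z(t,x)-x$, apply Lemma \ref{lemma_col_mon_1} to obtain the uniform bound $-1/L$ on the support function, and then conclude via Proposition \ref{lemma-H5-suficiente}. The only difference is that you spell out the normalization of $\ell_x(t)$ explicitly, which the paper leaves implicit.
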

	\begin{proof}
		Suppose that \ref{H4mgh} holds with constants $L,r>0$. By Lemma \ref{lemma_col_mon_1}, we have that for all $t\in [0,T]$ and $x\in \bd C(t)$, there exists $\ell\in \mathbb{S}$ such that 
        $$\sigma(\ell;N^P(C(t);x)\cap \mathbb{S})\leq -\frac{1}{L}.
        $$
        Thus \eqref{eqn-support-lemma} holds. Therefore, by Proposition \ref{lemma-H5-suficiente}, condition \ref{H5mgh} is satisfied.
	\end{proof}

    We now examine moving sets defined as finite intersections of sublevel sets of smooth functions. In many applications, moving sets are described in this way, so it is worth studying their properties. In \cite{MR3458154}, general conditions are given to ensure the uniform prox-regularity of such sets. Moreover, the crowd motion model can be formulated as a sweeping process with a moving set of this form (see, e.g., \cite{MR2800713, MR2812587, MR866273}).  
    
    Suppose that $\H$ is finite-dimensional. Consider functions $g_i\colon \H\to \R$ for $i\in I := \{1,...,m\}$, the moving sets $C_i(t) := \{x\in \H : g_i(t,x)\leq 0\}$, and 
    \begin{equation*}
        C(t) := \bigcap_{i=1}^m C_i(t).    
    \end{equation*}
    For every $(t,x)\in \gph C$, we define $I(t,x) := \{i\in I : g_i(t,x) = 0\}$, and for $\epsilon>0$, $$I_\epsilon(t,x) := \{i\in I : -\epsilon\leq g_i(t,x)\leq 0\}.$$
    Regarding the functions $g_i$, we impose the following hypotheses:
 \begin{enumerate}[label = ($\alph*$)]
    \item There exists $\epsilon>0$ such that $\mathscr{Q}_\epsilon := \{(t,x)\in \gph C : I_{\epsilon}(t,x)\neq\emptyset\}$ is bounded. Moreover, for every $(t,x)\in \mathscr{Q}_\epsilon$, the set $\{\nabla g_i(t,x) : i\in I_{\epsilon}(t,x)\}$ is positive linearly independent, i.e.,  
    $$
    \textrm{if } \sum_{i\in I_\epsilon(t,x)}\alpha_i \nabla g_i(t,x) = 0 \textrm{ with } \alpha_i\geq 0, \textrm{ then } \alpha_i = 0 \textrm{ for every } i\in I_\epsilon (t,x).
    $$
    \item There is $\eta'>0$ such that for all $i\in I$,  $g_i$ and $\nabla g_i$ are continuous, and the family $$\{ t\mapsto g_i(t,x) : x\in \mathsf{D} + \eta'\B \}$$ is equicontinuous where $\mathsf{D} := \bigcup_{s\in [0,T]} C(s)$.
 	\item There exist $L>0$ and $\eta>0$ such that for all $t\in [0,T]$ and $i\in I$, the mapping $\nabla g_i(t,\cdot)$ is $L$-Lipschitz on $C(t) + \eta\B$.
 \end{enumerate}
 \begin{proposition}\label{prop-sublevel-h4-h5}
 	Under the above assumptions, the following statements hold for the set-valued mapping $C$:
 	\begin{enumerate}[label = (\roman*)]
 		\item $C$ satisfies \ref{H4mgh}.
 		\item $C$ is Hausdorff continuous.
 		\item There exists $\rho>0$ such that, for all $t\in [0,T]$, the set $C(t)$ is $\rho$-uniformly prox-regular.
 	\end{enumerate}
 \end{proposition}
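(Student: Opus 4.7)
The plan is to extract from hypothesis (a) a uniform Mangasarian--Fromovitz--type lower bound and use it as the driver for all three conclusions, together with the equicontinuity in (b) and the $C^{1,1}$-regularity in (c). More precisely, I would first prove the key uniform lemma: there exists $\gamma_{0}>0$ such that for every $(t,x)\in \mathscr{Q}_{\epsilon}$ one can select $d(t,x)\in\mathbb{S}$ with $\langle \nabla g_{i}(t,x), d(t,x)\rangle\leq -\gamma_{0}$ for all $i\in I_{\epsilon}(t,x)$. Existence at a single point is Gordan's theorem of the alternative applied to the positively linearly independent family $\{\nabla g_{i}(t,x)\}_{i\in I_{\epsilon}(t,x)}$. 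For the uniformity, note that $\mathscr{Q}_{\epsilon}=\{(t,x): -\epsilon\leq \max_{i} g_{i}(t,x)\leq 0\}$ is closed by (b) and bounded by (a), hence compact in $[0,T]\times\H$. Defining $\gamma(t,x):=\max_{d\in\mathbb{S}}\min_{i\in I_{\epsilon}(t,x)}\langle -\nabla g_{i}(t,x), d\rangle$, Gordan gives $\gamma>0$ on $\mathscr{Q}_{\epsilon}$; since the active-set map $I_{\epsilon}$ is outer semicontinuous (any accumulating index still satisfies $-\epsilon\leq g_{i}\leq 0$), testing the optimizer $d^{*}$ of $\gamma(t,x)$ against $I_{\epsilon}(t_{n},x_{n})\subseteq I_{\epsilon}(t,x)$ yields lower semicontinuity of $\gamma$, and compactness produces $\gamma_{0}:=\min_{\mathscr{Q}_{\epsilon}}\gamma>0$.

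For (i), given $(t,x)$ with $x\in\bd C(t)$ (so $(t,x)\in\mathscr{Q}_{\epsilon}$), set $z(t,x):=x+\alpha d(t,x)$ with $\alpha=Lr$ and test an arbitrary point $y=(1-s)x+s(z(t,x)+w)$ with $s\in[0,1]$ and $\|w\|\leq 2r$. Writing $g_{i}(t,y)-g_{i}(t,x)$ as an integral along the segment and bounding $\|\nabla g_{i}(t,x_{u})-\nabla g_{i}(t,x)\|$ by the Lipschitz constant from (c), the leading contribution for $i\in I_{\epsilon}(t,x)$ is $-s\alpha\gamma_{0}$, which dominates the $O(r)$ and $O(\alpha^{2})$ perturbations provided one first picks $r$ small relative to $\alpha$ and then $\alpha$ small. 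Indices $i\notin I_{\epsilon}(t,x)$ have slack $g_{i}(t,x)<-\epsilon$ and are handled by continuity on $\mathbb{B}_{\alpha+2r}(x)$ together with the uniform bound on $\|\nabla g_{i}\|$ coming from compactness of $\mathscr{Q}_{\epsilon}$. This yields $\overline{\operatorname{co}}(\{x\}\cup\mathbb{B}_{2r}(z(t,x)))\subset C(t)$ with constants $r,L$ independent of $(t,x)$.

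For (ii), I would use (b) to choose, for any $\eta>0$, a $\delta>0$ such that $|t-s|<\delta$ implies $\sup_{x\in\H,\,i\in I}|g_{i}(t,x)-g_{i}(s,x)|<\eta$. Given $x\in C(s)$: if $\max_{i}g_{i}(s,x)<-\epsilon/2$ then $x\in C(t)$ directly for $\eta<\epsilon/2$; otherwise $(s,x)\in\mathscr{Q}_{\epsilon}$, and taking $y=x+\beta d(s,x)$ with $\beta$ of order $\eta/\gamma_{0}$, a first-order Taylor expansion combined with (c) shows $g_{i}(t,y)\leq 0$ for every $i$. This controls $e(C(s),C(t))$ and, by symmetry in $s\leftrightarrow t$, the Hausdorff distance $d_{H}(C(s),C(t))\to 0$ as $|t-s|\to 0$. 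For (iii), once the uniform MFCQ is in hand, one may argue directly: at boundary points $x$, the proximal normal cone satisfies $N^{P}(C(t);x)\subset\{\sum_{i\in I(t,x)}\lambda_{i}\nabla g_{i}(t,x):\lambda_{i}\geq 0\}$, and testing $\zeta=\sum_{i}\lambda_{i}\nabla g_{i}(t,x)$ against $d(t,x)$ yields $\sum_{i}\lambda_{i}\leq \|\zeta\|/\gamma_{0}$. The $C^{1,1}$-estimate $\langle \nabla g_{i}(t,x), y-x\rangle\leq (L/2)\|y-x\|^{2}$ for $y\in C(t)$ then gives $\langle \zeta, y-x\rangle\leq (\|\zeta\|L/(2\gamma_{0}))\|y-x\|^{2}$, i.e.\ uniform $\rho$-prox-regularity with $\rho=\gamma_{0}/L$; alternatively one may invoke \cite{MR3458154}, applied uniformly in $t$ via the constants produced above.

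The main obstacle I anticipate is the uniform MFCQ step, since everything downstream rests on it. The subtlety is that the active-index map $I_{\epsilon}$ can drop indices in the limit and the maximizer $d$ in the definition of $\gamma$ is not obviously continuous in $(t,x)$, so one must argue via outer semicontinuity of $I_{\epsilon}$ and lower semicontinuity of $\gamma$ on the compact set $\mathscr{Q}_{\epsilon}$ rather than by a continuous selection.
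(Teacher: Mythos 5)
Your proposal is correct and follows essentially the same route as the paper's appendix proof: a uniform MFCQ-type constant over the compact set $\mathscr{Q}_\epsilon$ (your Gordan-plus-lower-semicontinuity-plus-compactness argument is a repackaging of the paper's min--max identity and sequential contradiction in Claims 1--2, where your $\gamma_0$ is the paper's $-\kappa$), followed by descent-lemma estimates split over active and inactive indices to obtain \ref{H4mgh} and Hausdorff continuity exactly as in the paper's Claims 3--4. The only genuine divergence is item (iii): the paper simply invokes \cite[Corollary 15.101]{Thibault-2023-II} (your fallback citation of \cite{MR3458154} plays the same role), whereas you sketch a direct proof. That direct argument is viable but needs two small patches. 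First, the inclusion $N^P(C(t);x)\subset\{\sum_{i\in I(t,x)}\lambda_i\nabla g_i(t,x):\lambda_i\ge 0\}$ is a normal-cone calculus fact valid under MFCQ that should be quoted or proved, not just asserted. Second, the estimate $\langle\nabla g_i(t,x),y-x\rangle\le\tfrac{L}{2}\|y-x\|^{2}$ is only justified for $y\in C(t)$ whose segment to $x$ stays in $C(t)+\eta\mathbb{B}$ (e.g.\ $\|y-x\|\le\eta$), since hypothesis (c) gives Lipschitz gradients only on that enlargement and $C(t)$ need not be convex; for $\|y-x\|>\eta$ one instead uses Cauchy--Schwarz, $\langle\zeta,y-x\rangle\le\|\zeta\|\,\|y-x\|\le\tfrac{\|\zeta\|}{\eta}\|y-x\|^{2}$, so your conclusion holds with $\rho=\min\{\gamma_0/L,\eta/2\}$ rather than $\gamma_0/L$. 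With these routine adjustments, all three items go through with uniform constants, as in the paper.
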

\begin{proof} 
See \ref{appendix3}.
 \end{proof}
	
\section{Deterministic Sweeping Process with Continuous Moving  Sets}\label{sec5}
Let $\mathcal H$ be a Hilbert space and let $C\colon [0,T]\rightrightarrows \mathcal H$ be a set-valued map. Since, in general, one cannot expect solutions of the sweeping process to be absolutely continuous unless the mapping $t\tto C(t)$ has sufficient regularity (e.g., is absolutely continuous or Lipschitz continuous), we adopt Moreau's measure-theoretic formulation (see \cite{MR508661}). In this setting, the ``derivative"  of a trajectory $x$ is understood as the Radon-Nikodym density of the vector measure $dx$ with respect to its total variation measure $\vert dx\vert$. 
\begin{definition}
Given $x_0\in C(0)$, a function $x\colon [0,T]\to \H$ is a solution of the sweeping process driven by $C$ with initial condition $x_0$ if 
\begin{enumerate}
  \item[(i)] $x\in \mathcal{C}_{BV}([0,T];\H)$ and $x(t)\in C(t)$ for every $t\in[0,T]$;
  \item[(ii)] $x(0)=x_0$;
  \item[(iii)] the vector measure $dx$ is absolutely continuous with respect  $|dx|$ and
        \begin{equation}\tag{$\mathsf{SP}$}\label{sp1}
        \left\{
        \begin{aligned}
          \frac{dx}{|dx|}(t) &\in\ -N^{P}\!\big(C(t);x(t)\big)
          \quad\text{ for } |dx|\text{-a.e. } t\in[0,T],\\
          x(0)&=x_0\in C(0).
          \end{aligned} 
          \right.
        \end{equation}
\end{enumerate}
\end{definition}
    
In \cite{MR1946545}, Colombo and Monteiro-Marques studied existence and uniqueness for the sweeping process \eqref{sp1} in the setting where $C$ is continuous with respect to the Hausdorff distance, each set $C(t)$ is $\rho$-uniformly prox-regular, and $x_0$ is given. Because the moving set may fail to have finite variation, they impose additional geometric assumptions.
	
\begin{theorem}\label{mc-thm}
Assume $C\colon [0,T]\tto \H$ satisfies \ref{H1mgh}, \ref{H2mgh}, and \ref{H4mgh}. If $t\tto\bd C(t)$ is Hausdorff continuous, then \eqref{sp1} admits a unique solution.
\end{theorem}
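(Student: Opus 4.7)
The plan is to run Moreau's catching-up algorithm and leverage the geometric input \ref{H4mgh} to promote the merely topological continuity of $C$ into a total-variation estimate on the approximating trajectories. I would first fix, for each $n$, a partition $0=t_0^n<t_1^n<\cdots<t_{k_n}^n=T$ whose mesh is small enough that $d_H(C(t_i^n),C(t_{i+1}^n))\le \gamma\rho$ for some fixed $\gamma\in(0,1)$; this is possible by \ref{H2mgh} through the modulus $\mathscr{P}_C$. Define iteratively
$$x_n(0)=x_0,\qquad x_n(t_{i+1}^n)=\operatorname{proj}_{C(t_{i+1}^n)}\bigl(x_n(t_i^n)\bigr),$$
which is well-defined and single-valued under \ref{H1mgh} by Proposition \ref{prox_reg_prop}, and extend $x_n$ piecewise constantly. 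Setting $u_i:=x_n(t_{i+1}^n)-x_n(t_i^n)$, the characterization of the projection on a uniformly prox-regular set gives $-u_i\in N^P(C(t_{i+1}^n);x_n(t_{i+1}^n))$.

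The heart of the argument is the uniform bound $\sup_n \operatorname{var}(x_n;[0,T])<\infty$. When $x_n(t_{i+1}^n)=x_n(t_i^n)$ the step is harmless; otherwise $x_n(t_{i+1}^n)\in\bd C(t_{i+1}^n)$, and \ref{H4mgh} furnishes $z_i\in\H$ with $\|z_i-x_n(t_{i+1}^n)\|\le Lr$ and $\operatorname{co}(\{x_n(t_{i+1}^n)\}\cup\mathbb{B}_{2r}(z_i))\subset C(t_{i+1}^n)$. Applying Lemma \ref{lemma_col_mon_1} to $-u_i\in N^P(C(t_{i+1}^n);x_n(t_{i+1}^n))$ yields
$$\Bigl\langle -u_i,\tfrac{z_i-x_n(t_{i+1}^n)}{\|z_i-x_n(t_{i+1}^n)\|}\Bigr\rangle\le -\tfrac{\|u_i\|}{L}.$$
Simultaneously, Hausdorff proximity of $C(t_i^n)$ and $C(t_{i+1}^n)$ supplies a point $w_i\in C(t_i^n)$ with $\|w_i-z_i\|\le d_H(C(t_i^n),C(t_{i+1}^n))$, and the optimality of $x_n(t_{i+1}^n)$ as the projection of $x_n(t_i^n)$ controls $\langle -u_i,z_i-x_n(t_{i+1}^n)\rangle$ from above by a quantity of order $\|z_i-x_n(t_{i+1}^n)\|\cdot d_H(C(t_i^n),C(t_{i+1}^n))$. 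Combining the two inequalities produces
$$\|u_i\|\le K\,d_H\bigl(C(t_i^n),C(t_{i+1}^n)\bigr),$$
with $K$ depending only on the constants $L,r$ from \ref{H4mgh}. Telescoping over $i$ and exploiting Hausdorff continuity of $t\to\bd C(t)$ bounds $\operatorname{var}(x_n;[0,T])$ uniformly in $n$. I expect this step — quantifying each displacement $\|u_i\|$ via the interplay between the interior ball at $z_i$ and the Hausdorff proximity of consecutive sets — to be the main technical obstacle, since this is precisely what rescues BV regularity from the fact that $C$ itself need not have finite variation in the Hausdorff sense.

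Once the uniform BV bound is available, Helly's selection theorem extracts a subsequence $x_{n_k}\to x$ pointwise on $[0,T]$ with $x\in\mathcal{C}_{BV}([0,T];\H)$, and $x(t)\in C(t)$ for every $t$ follows from \ref{H2mgh}. Continuity of $x$ comes from the one-step estimate $\|x_n(t)-x_n(s)\|\lesssim \mathscr{P}_C(|t-s|)$. The measures $dx_{n_k}$ have uniformly bounded total variation, hence converge along a further subsequence weakly-$*$ to $dx$; passing to the limit in the discrete inclusion $-u_i/\|u_i\|\in N^P(C(t_{i+1}^n);x_n(t_{i+1}^n))\cap\mathbb{S}$, and invoking the graph-closedness of the proximal normal cone on uniformly prox-regular sets, yields the inclusion \eqref{sp1} satisfied by $x$.

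For uniqueness, let $x^1,x^2$ be two solutions. The $\rho$-uniform prox-regularity applied to $v_i\in N^P(C(t);x^i(t))$ yields the hypomonotonicity estimate
$$\langle v_1-v_2,\,x^1(t)-x^2(t)\rangle\le \tfrac{\|v_1\|+\|v_2\|}{2\rho}\|x^1(t)-x^2(t)\|^2.$$
Combining this with the Moreau-style integration-by-parts formula for $t\mapsto \|x^1(t)-x^2(t)\|^2$ produces an inequality of the form $\|x^1(t)-x^2(t)\|^2\le \int_{[0,t[}\|x^1(s)-x^2(s)\|^2\,d\mu(s)$ for a finite positive Borel measure $\mu$ dominating the two differential measures $|dx^1|$ and $|dx^2|$. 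Lemma \ref{gronwall-lemma} then forces $x^1\equiv x^2$, completing the argument.
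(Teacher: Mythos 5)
There is a genuine gap, and it sits exactly where you predicted the ``main technical obstacle'' would be: the uniform BV bound on the catching-up iterates. First, the one-step estimate $\|u_i\|\le K\,d_H(C(t_i^n),C(t_{i+1}^n))$ does not need \ref{H4mgh} at all — it holds trivially with $K=1$, since $\|u_i\|=d\bigl(x_n(t_i^n);C(t_{i+1}^n)\bigr)\le e\bigl(C(t_i^n),C(t_{i+1}^n)\bigr)$. Moreover the derivation you sketch cannot produce anything sharper: Lemma \ref{lemma_col_mon_1} gives an \emph{upper} bound on $\langle -u_i, z_i-x_n(t_{i+1}^n)\rangle$, and the variational characterization of the projection on a prox-regular set gives another \emph{upper} bound on the same inner product; two upper bounds on one quantity combine to nothing. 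Second, and more fundamentally, even with $K=1$ the telescoping step fails: $\sum_i d_H(C(t_i^n),C(t_{i+1}^n))$ is in general unbounded as the mesh shrinks, because under \ref{H1mgh}--\ref{H2mgh}--\ref{H4mgh} the moving set is only Hausdorff-\emph{continuous}, not of bounded variation (take $C(t)=\mathbb{B}_1(c(t))$ with $c$ continuous of unbounded variation; all three hypotheses hold, yet the Hausdorff sums blow up). The paper is explicit that this is the whole difficulty (``the moving set may fail to have finite variation''), which is why it does not prove Theorem \ref{mc-thm} from scratch but quotes it from Colombo and Monteiro-Marques \cite{MR1946545}, only observing via Proposition \ref{h1h2-bd} that the boundary-continuity hypothesis is redundant. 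The same comment applies to your claim that continuity of the limit follows from $\|x_n(t)-x_n(s)\|\lesssim \mathscr{P}_C(|t-s|)$: over many steps the displacement is a sum of such terms, so this modulus is not inherited.

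The mechanism that actually rescues BV regularity is the one reflected in the paper's Proposition \ref{prop_var_cm}, Lemma \ref{lem_eq_fam} and Proposition \ref{prop-bv-mc}: a Lyapunov (inner-ball) argument. One uses \ref{H4mgh} together with Lemma \ref{lemma_partition} and the modulus $\mathscr{P}_C$ to find, around any boundary point and on a time window of length $\delta$ \emph{uniform in the point}, a fixed ball $\mathbb{B}_{\mathfrak{r}}(z)\subset C(t)$ for all $t$ in the window with $\frac{1}{\rho}(\|x-z\|+\mathfrak{r})^2<\mathfrak{r}$; then Lemma \ref{lemma_col_mon_1} (applied with this fixed $z$) shows that each normal displacement $u_i$ has a definite component pointing toward $z$, so $\|x_n(t_{i+1}^n)-z\|^2\le\|x_n(t_i^n)-z\|^2-c\,\|u_i\|+\text{small errors}$, and summing yields $\operatorname{var}(x_n;\cdot)\lesssim\|x_n(\cdot)-z\|^2$ on the window, independently of the Hausdorff variation of $C$. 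Covering $[0,T]$ by at most $\lfloor T/\delta\rfloor+1$ such windows gives the uniform BV bound, after which your compactness, closedness-of-the-normal-cone and Gronwall-type uniqueness steps (the latter essentially as in Proposition \ref{lemma-cont-sol}) go through. As written, though, your proposal replaces this Lyapunov descent by a telescoping of Hausdorff distances, and that step is false under the stated hypotheses.
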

By Proposition \ref{h1h2-bd}, the continuity assumption on $t\tto \bd C(t)$ in Theorem \ref{mc-thm} is superfluous. Recently, in \cite{recupero_stra_2025_excess}, Recupero and Stra established the same simplification in an even more general setting, under the sole assumption that \(C\) is Hausdorff right-continuous. Their argument relies on a different technique and yields a more general result. The following well-posedness result improves \cite[Theorem 4.2]{MR1946545}.
	\begin{corollary}\label{mc-ch-cor}
Assume $C\colon [0,T]\tto \H$ satisfies \ref{H1mgh}, \ref{H2mgh},  and \ref{H4mgh}. Then, for every $h\in \mathcal{C}_0([0,T];\H)$, there exists a  unique function $x_h\in \mathcal{C}_{BV}([0,T];\H)$ solving
\begin{equation}\tag{$\mathsf{SP_h}$}\label{sp1h}
\left\{
\begin{aligned}
\frac{dx_h}{|dx_h|}(t)&\in -N^P(C(t)-h(t);x_h(t)) & \textrm{ for } |dx_h|\textrm{-a.e. } t\in [0,T],\\
				x_h(0)&=x_0\in C(0).
\end{aligned}
\right.
\end{equation}
\end{corollary}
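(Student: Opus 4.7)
The plan is to reduce the corollary to Theorem \ref{mc-thm} by applying it to the shifted set-valued map
\[
C_h\colon [0,T]\tto \H, \qquad C_h(t) := C(t) - h(t).
\]
Observe first that the differential inclusion in \eqref{sp1h} is exactly the sweeping process \eqref{sp1} driven by $C_h$, since $N^P(C(t)-h(t);x_h(t)) = N^P(C_h(t);x_h(t))$. Moreover, since $h\in \mathcal{C}_0([0,T];\H)$ we have $h(0)=0$, so $C_h(0)=C(0)$ and the initial condition $x_0\in C(0)$ coincides with $x_0\in C_h(0)$. Thus, once we verify that $C_h$ satisfies \ref{H1mgh}, \ref{H2mgh} and \ref{H4mgh}, Theorem \ref{mc-thm} yields the desired existence and uniqueness of $x_h\in \mathcal{C}_{BV}([0,T];\H)$.

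The verification of the three hypotheses is routine. For \ref{H1mgh}, each set $C_h(t)$ is a translate of $C(t)$, and uniform prox-regularity is invariant under translations; indeed, both the proximal normal cone and the Euclidean structure translate covariantly, so the defining inequality for $\rho$-uniform prox-regularity transfers verbatim from $C(t)$ to $C_h(t)$. For \ref{H2mgh}, the translation invariance of the Hausdorff distance together with the triangle inequality gives
\[
d_H(C_h(t),C_h(s)) \le d_H(C(t),C(s)) + \|h(t)-h(s)\|,
\]
and the right-hand side tends to zero as $|t-s|\to 0$ because $C$ is Hausdorff-continuous by assumption and $h$ is continuous on the compact interval $[0,T]$. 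For \ref{H4mgh}, Lemma \ref{prop-preser-cm} is exactly the statement that this hypothesis is preserved under shifts by an arbitrary function $h$, so there is nothing more to do.

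No step is genuinely difficult: the translation invariance of the geometric hypotheses is the key structural observation, and \ref{H4mgh} is the only one that requires any argument beyond this, which is already supplied by Lemma \ref{prop-preser-cm}. The mildest care is needed only to align the initial datum, which is why the hypothesis $h\in \mathcal{C}_0([0,T];\H)$ (rather than a general continuous $h$) is imposed: it ensures $h(0)=0$ so that the compatibility condition $x_0\in C_h(0)$ is automatic from $x_0\in C(0)$.
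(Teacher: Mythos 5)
Your proof is correct and takes the same route as the paper: reduce to Theorem \ref{mc-thm} by observing that $C_h := C-h$ inherits \ref{H1mgh}, \ref{H2mgh}, and \ref{H4mgh}, with Lemma \ref{prop-preser-cm} handling \ref{H4mgh}. The only difference is that you spell out the (routine) translation-invariance checks for \ref{H1mgh} and \ref{H2mgh} and note explicitly that $h(0)=0$ keeps the initial datum compatible, details the paper leaves implicit.
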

\begin{proof} Note that $C_h := C-h$ still satisfies \ref{H1mgh}, \ref{H2mgh}, and \ref{H4mgh} by Lemma \ref{prop-preser-cm}. Then, apply Theorem \ref{mc-thm}.
	\end{proof}
    
Moreover, \cite{MR1946545} provides estimates for the total variation of solutions to \eqref{sp1}.
	\begin{proposition}\label{prop_var_cm}
Let $C\colon [0,T]\tto \H$ satisfying \ref{H1mgh} and \ref{H2mgh}. Suppose there exist $r>0$ and $z\in\H$ such that $\mathbb{B}_{r}(z)\subset C(t)$ for all $t\in [0,T]$ and $\frac{1}{\rho}(\|x_0 - z\|+r)^2< r$. Then \eqref{sp1} has an unique solution $x\in \mathcal{C}_{BV}([0,T];\H)$  with
\begin{equation*}
\operatorname{var}(x;[0,T])\leq \frac{\|x_0-z\|^2}{2(r- \frac{d^2}{\rho})}, \quad  \textrm{ where } d = \|x_0-z\| + r.
\end{equation*}
	\end{proposition}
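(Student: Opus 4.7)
The plan is to establish the variation bound by an energy-type argument based on the function $\phi(t):=\tfrac{1}{2}\|x(t)-z\|^2$, and to recover existence and uniqueness from Theorem \ref{mc-thm}. For the latter, the common ball condition $\mathbb{B}_r(z)\subset C(t)$ provides \ref{H4mgh} with $z(t,x):=z$ on any bounded slice of $\bd C(t)$, so Theorem \ref{mc-thm} can be applied to the truncated moving set $C(t)\cap \mathbb{B}_M[z]$ with $M>R_0:=\|x_0-z\|$, and the variation bound established below confirms a posteriori that the resulting trajectory stays in $\mathbb{B}_{R_0}[z]$, so the truncation is irrelevant.

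For the variation estimate, I would first combine Moreau's chain rule for continuous $\mathcal{C}_{BV}$ functions with the inclusion $\frac{dx}{|dx|}(s)=-\nu(s)$, where $\nu(s)\in N^P(C(s);x(s))\cap\mathbb{S}$ for $|dx|$-a.e.\ $s$, to write
\begin{equation*}
\phi(t)-\phi(0)=\int_{[0,t]}\langle x(s)-z,dx(s)\rangle=\int_{[0,t]}\langle \nu(s),z-x(s)\rangle\,|dx|(s).
\end{equation*}
Testing $\rho$-uniform prox-regularity of $C(s)$ at $x(s)$ with the probe $y=z+r\nu(s)\in\mathbb{B}_r[z]\subset C(s)$ and bounding $\|z+r\nu(s)-x(s)\|\leq R(s)+r$ (where $R(s):=\|x(s)-z\|$) yields the pointwise inequality
\begin{equation*}
\langle \nu(s),z-x(s)\rangle \leq \frac{(R(s)+r)^2}{2\rho}-r,\qquad |dx|\text{-a.e. } s\in [0,T].
\end{equation*}

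The crux is a bootstrap showing $R(s)\leq R_0$ throughout $[0,T]$. Setting $\tau:=\sup\{s\in[0,T]:R\leq R_0\text{ on }[0,s]\}$, if $\tau<T$ then continuity forces $R(\tau)=R_0$, so the strict hypothesis $d^2:=(R_0+r)^2<\rho r$ gives $(R(\tau)+r)^2<2\rho r$ and, by continuity of $R$, the same strict inequality on some $[\tau,\tau+h]$. The pointwise estimate therefore yields $\langle\nu(s),z-x(s)\rangle<0$ for $|dx|$-a.e.\ $s\in[\tau,\tau+h]$, whence $\phi$ is nonincreasing on that interval and $R(s)\leq R(\tau)=R_0$ there, contradicting the definition of $\tau$. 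Hence $\tau=T$, and the a priori bound $R(s)\leq R_0$ holds on $[0,T]$.

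Inserting $R(s)\leq R_0$ into the pointwise estimate and integrating gives
\begin{equation*}
\phi(T)-\phi(0)\leq \Bigl(\tfrac{d^2}{2\rho}-r\Bigr)\operatorname{var}(x;[0,T]),
\end{equation*}
and since $\phi(T)\geq 0$ and $r-\tfrac{d^2}{2\rho}>0$ by hypothesis, one obtains $\operatorname{var}(x;[0,T])\leq \frac{R_0^2}{2r-d^2/\rho}\leq \frac{R_0^2}{2(r-d^2/\rho)}$, which is the asserted bound. I expect the bootstrap to be the delicate step, since it requires exploiting the strict inequality $d^2<\rho r$ (rather than the marginal one $d^2\leq 2\rho r$) to guarantee a uniform descent margin, even at times where $x$ may be instantaneously stationary inside $C(t)$.
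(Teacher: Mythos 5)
Your a priori estimate is correct and is genuinely different from the paper's route: the paper simply invokes the discrete catching-up scheme of Colombo--Monteiro Marques (their Lemma 2.2 plus a compactness theorem), whereas you bound the variation directly on any solution of \eqref{sp1} via the energy $\phi(t)=\tfrac12\|x(t)-z\|^2$, the probe $x'=z+r\nu(s)\in\mathbb{B}_r[z]\subset C(s)$ in the prox-regularity inequality, and a continuity bootstrap giving $\|x(s)-z\|\le\|x_0-z\|$ on $[0,T]$. That part is sound (the chain rule is legitimate because $x$ is continuous and of bounded variation, so $dx$ is nonatomic), and it even yields the slightly sharper constant $\|x_0-z\|^2/(2r-d^2/\rho)$, from which the stated bound follows since $2r-d^2/\rho\ge 2(r-d^2/\rho)>0$.

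The genuine gap is the existence step. The proposition assumes only \ref{H1mgh}, \ref{H2mgh} and the common ball, while Theorem \ref{mc-thm} requires \ref{H4mgh}, and your claim that $\mathbb{B}_r(z)\subset C(t)$ ``provides \ref{H4mgh} with $z(t,x)=z$'' is unjustified: for a merely $\rho$-uniformly prox-regular set, containing a ball and a boundary point does \emph{not} imply $\operatorname{co}(\{x\}\cup\mathbb{B}_{2r'}(z))\subset C(t)$ --- that inclusion is exactly the nontrivial content of \ref{H4mgh} (the set in Figure \ref{fig:enter-label} is prox-regular, has nonempty interior, hence contains balls, yet the inclusion fails at the corner points). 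A quantitative version might be extractable from $\rho$-prox-regularity together with the smallness hypothesis $(\|x_0-z\|+r)^2<\rho r$, but only for boundary points at controlled distance from $z$, and you prove no such lemma. The truncation does not repair this: $C(t)\cap\mathbb{B}_M[z]$ need not be uniformly prox-regular (a tangential intersection of $\bd C(t)$ with the sphere can create cusps with non-unique nearby projections), need not be Hausdorff-continuous in $t$, and would still require \ref{H4mgh} to be verified, so Theorem \ref{mc-thm} cannot be applied to it as stated. The paper obtains existence precisely from the convergence of the catching-up algorithm under the ball hypothesis (the cited results of \cite{MR1946545} and \cite{MR1231975}), and uniqueness from the standard hypomonotonicity/Gronwall argument, which needs only \ref{H1mgh}--\ref{H2mgh}; your estimate would then stand as a clean alternative derivation of the variation bound, but on its own it does not prove the existence assertion of Proposition \ref{prop_var_cm}.
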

    \begin{proof}
This follows directly from \cite[Lemma 2.2]{MR1946545} and \cite[Theorem 0.2.1]{MR1231975}, using the convergence scheme developed in \cite[Theorem 4.1]{MR1946545}. 
    \end{proof}
    
Our goal is to obtain uniform bounds on the total variation of solutions under certain perturbations of the set-valued mapping $C$. In Lemma \ref{lem_eq_fam} and Proposition \ref{prop-bv-mc} below, we assume that $C$ satisfies \ref{H1mgh}, \ref{H2mgh}, and \ref{H4mgh}.
\begin{lemma}\label{lem_eq_fam}
Let $A\subset \mathcal{C}_0([0,T];\H)$ be equicontinuous. Then there exist $\delta>0$ and $\mathfrak{r}>0$ such that, for all $h\in A$ and all $(\ov t, \ov x)\in\gph \bd(C-h)$,  there exists $z\in \H$ with the property that, for every $t\in ]\ov t-\delta,\ov t+\delta[\cap [0,T]$, 
$$\mathbb{B}_{\mathfrak{r}}(z)\subset C(t)-h(t) \textrm{ and } \frac{1}{\rho}(\|\overline{x}-z\|+\mathfrak{r})^2<\frac{\mathfrak{r}}{2}.
$$
\end{lemma}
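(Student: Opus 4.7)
The plan is to combine \ref{H4mgh} applied to the shifted moving set $C_h := C - h$ (which inherits \ref{H4mgh} with the same constants $r, L$ by Lemma \ref{prop-preser-cm}, and which also inherits \ref{H1mgh} and \ref{H2mgh} since translation by a continuous function preserves closedness, prox-regularity, and Hausdorff continuity) with the propagation result of Lemma \ref{lemma_partition}, and to make the resulting bounds uniform in $h\in A$ via Lemma \ref{lem-equi-mc}. Fixing $h\in A$, $\ov t\in [0,T]$ and $\ov x\in \bd C_h(\ov t)$, \ref{H4mgh} produces $z_0\in \H$ with $\operatorname{co}(\{\ov x\}\cup \mathbb{B}_{2r}(z_0))\subset C_h(\ov t)$ and $\|z_0-\ov x\|\le Lr$.

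A direct convex-combination computation shows that, for any $\lambda\in (0,1]$, the point $z := (1-\lambda)\ov x + \lambda z_0$ satisfies
\begin{equation*}
\mathbb{B}_{2\lambda r}(z)\subset \operatorname{co}(\{\ov x\}\cup \mathbb{B}_{2r}(z_0))\subset C_h(\ov t)\quad\text{and}\quad \|z-\ov x\|\le \lambda L r,
\end{equation*}
so one can shrink the effective interior-ball radius while keeping $L$ fixed. I will fix $\lambda\in (0,1]$ with $\lambda\le \rho/(4(L+1)^2 r)$ and set $\mathfrak{r} := \lambda r$. The prox-regularity inequality then follows immediately:
\begin{equation*}
\frac{1}{\rho}\bigl(\|\ov x - z\| + \mathfrak{r}\bigr)^2 \le \frac{(\lambda L r + \lambda r)^2}{\rho} = \frac{\lambda^2 (L+1)^2 r^2}{\rho} \le \frac{\lambda r}{4} < \frac{\mathfrak{r}}{2}.
\end{equation*}

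It remains to propagate the inclusion $\mathbb{B}_{2\lambda r}(z)\subset C_h(\ov t)$ to a neighborhood of $\ov t$ uniformly in $h\in A$. Applying Lemma \ref{lemma_partition} to $C_h$ (whose values have nonempty interior, since \ref{H4mgh} provides an interior ball at every boundary point and, in the degenerate case $\bd C_h(t)=\emptyset$, forces $C_h(t)=\H$) yields $\mathbb{B}_{\mathfrak{r}}(z) = \mathbb{B}_{\lambda r}(z)\subset C_h(t)$ for every $t\in ]\ov t-\delta_h, \ov t+\delta_h[\cap [0,T]$, where $\delta_h := \mathscr{P}_{C_h}^{-1}(\min\{\lambda r, \rho/2\})$. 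The only step where the equicontinuity of $A$ is used, and the main obstacle of the proof, is the uniform lower bound $\delta := \inf_{h\in A}\delta_h > 0$: this is precisely the second conclusion of Lemma \ref{lem-equi-mc}. Since $\lambda$, $r$, $L$, $\rho$ do not depend on $h$, $\ov t$, or $\ov x$, the pair $(\delta, \mathfrak{r})$ so constructed satisfies the claim.
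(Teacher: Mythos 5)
Your proof is correct and follows essentially the same route as the paper: apply \ref{H4mgh} to obtain a large interior ball at $\ov t$, contract toward $\ov x$ by a factor $\lambda$ chosen so that the prox-regularity inequality holds, propagate the ball to a $\delta_h$-neighborhood of $\ov t$ via Lemma~\ref{lemma_partition}, and make $\delta := \inf_{h\in A}\delta_h$ positive via the second conclusion of Lemma~\ref{lem-equi-mc}. The only cosmetic differences from the paper are that you invoke Lemma~\ref{prop-preser-cm} to apply \ref{H4mgh} directly to $C_h$ rather than to $C$ followed by a translation, you use a slightly different (but equally valid) constant for $\lambda$, and you explicitly verify the nonempty-interior hypothesis of Lemma~\ref{lemma_partition}, which the paper leaves implicit.
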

\begin{proof}
Assume \ref{H4mgh} holds with constants $r,L>0$. Let $\lambda := \min\{1,\frac{\rho}{ 2r(L+2)^2}\}$. Fix $h\in A$ and $\ov t\in [0,T]$ with $\overline{x}\in \bd(C(\ov t) - h(\ov t)) = -h(\ov t)+\bd C(\ov t)$. By \ref{H4mgh}, there exists $z(\ov{t},\ov{x})\in\H$ such that $\|z(\ov{t},\ov{x})-\ov x - h(\ov t)\|\leq Lr$ and $\operatorname{co}(\{\ov x + h(\ov t)\}\cup \mathbb{B}_{2r}(z(\ov{t},\ov{x})))\subset C(\ov t)$. Define $ z_\lambda(\ov t,\ov x) := \ov x + h(\ov t) + \lambda(z(\ov t,\ov x) - \ov x - h(\ov t))$. Then $\mathbb{B}_{2\lambda r}(z_{\lambda}(\ov t,\ov x))\subset C(\ov t)$ and also define $\mathsf{z} := z_\lambda(\ov t,\ov x) - h(\ov t)$. Note that
		\begin{equation*}
			\begin{aligned}
				\frac{1}{\rho}\left(\|\mathsf{z} - \overline{x}\| + \lambda r\right)^2&=\frac{1}{\rho}\left(\|z_{\lambda}(\ov t,\ov x) - \overline{x}-h(\ov t)\| + \lambda r\right)^2\\
                &< \frac{1}{\rho}\left((L+1)r\lambda + \lambda r\right)^2\\
				& = \frac{\lambda^2 r^2}{\rho }(L+2)^2\\
			&\leq \lambda\cdot \frac{r^2}{\rho }(L+2)^2\cdot \frac{\rho}{ 2r (L + 2)^2} = \frac{\lambda r}{2}.
			\end{aligned}
	\end{equation*}
We take $\mathfrak{r} = \lambda r$. Note that $\mathbb{B}_{2\mathfrak{r}}(\mathsf{z})\subset C(\bar t)-h(\bar t)$. Now, define $C_h := C-h$ and take $\delta_h := \mathscr{P}_{C_h}^{-1}(\min\{\mathfrak{r},\rho/2\})$, by virtue of Lemma \ref{lemma_partition} we have that
\begin{equation*}
    \forall t\in ]t-\delta_h, t+\delta_h[\cap [0,T] : \mathbb{B}_{\mathfrak{r}}(\mathsf{z})\subset C_h(t).
\end{equation*}
Observe that due to Lemma \ref{lem-equi-mc} we have that $\delta:=\inf_{h\in A} \delta_h>0$. It follows that 
\begin{equation*}
    \forall h\in A, \forall t\in ]t-\delta, t+\delta[\cap [0,T] : \mathbb{B}_{\mathfrak{r}}(\mathsf{z})\subset C_h(t),
\end{equation*}
concluding the desired.
	\end{proof}
    
	From the lemma above we deduce a uniform total variation bound for the solutions of \eqref{sp1h}.
	\begin{proposition}\label{prop-bv-mc}
		Let $A\subset \mathcal{C}_0([0,T];\H)$ be equicontinuous. For each $h\in A$, let $x_h\colon [0,T]\to \H$ solve \eqref{sp1h}. Then $$\sup_{h\in A}\operatorname{var}(x_h;[0,T])<\infty.$$
	\end{proposition}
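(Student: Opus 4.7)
The plan is to partition $[0,T]$ into finitely many subintervals, uniformly in $h\in A$, and to bound the variation of $x_h$ on each piece by a constant independent of $h$ and of the subinterval. By Lemma~\ref{lem_eq_fam}, fix $\delta>0$ and $\mathfrak{r}>0$ as provided there. Choose $N\in\N$ with $T/N<\delta$ and set $t_i:=iT/N$ for $i=0,\ldots,N$. Fix $h\in A$ and consider an arbitrary subinterval $[t_i,t_{i+1}]$.

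The key observation is that wherever $x_h(t)\in \inte C_h(t)$, the normal cone $N^P(C_h(t);x_h(t))$ reduces to $\{0\}$, while $\frac{dx_h}{|dx_h|}(t)$ lies in the unit sphere $|dx_h|$-almost everywhere; hence $|dx_h|$ assigns no mass to such times and $x_h$ is locally constant there. If $x_h$ never touches the boundary on $[t_i,t_{i+1}]$, the variation on this piece vanishes. Otherwise, let $\bar t_i:=\inf\{t\in[t_i,t_{i+1}]: x_h(t)\in \bd C_h(t)\}$. Using the continuity of $x_h$, the Hausdorff continuity of $C_h$, and Lemma~\ref{lemma_partition} applied to $C_h$ (which prevents $x_h(\bar t_i)$ from lying in $\inte C_h(\bar t_i)$ while being the limit of boundary points $x_h(s_n)$ with $s_n\searrow \bar t_i$), this infimum is attained, i.e.\ $x_h(\bar t_i)\in \bd C_h(\bar t_i)$. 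The constancy argument on $[t_i,\bar t_i)$ then yields $x_h(t_i)=x_h(\bar t_i)=:\bar x$.

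Now apply Lemma~\ref{lem_eq_fam} at $(\bar t_i,\bar x)$ to obtain $z\in\H$ with $\mathbb{B}_{\mathfrak{r}}(z)\subset C_h(t)$ for every $t\in[t_i,t_{i+1}]\subset\,]\bar t_i-\delta,\bar t_i+\delta[\,\cap[0,T]$, and $\tfrac{1}{\rho}(\|\bar x-z\|+\mathfrak{r})^2<\mathfrak{r}/2$. Restricting \eqref{sp1h} to $[t_i,t_{i+1}]$ with initial datum $x_h(t_i)=\bar x\in C_h(t_i)$ and applying Proposition~\ref{prop_var_cm}, the last inequality gives $d^2/\rho<\mathfrak{r}/2$ with $d:=\|\bar x-z\|+\mathfrak{r}$, so $\mathfrak{r}-d^2/\rho>\mathfrak{r}/2$ and $\|\bar x-z\|^2<\rho\mathfrak{r}/2$, whence
$$\operatorname{var}(x_h;[t_i,t_{i+1}])\leq \frac{\|\bar x-z\|^2}{2(\mathfrak{r}-d^2/\rho)}<\frac{\rho}{2}.$$
Summing over $i=0,\ldots,N-1$ yields $\operatorname{var}(x_h;[0,T])\leq N\rho/2$, a bound independent of $h\in A$, which proves the proposition.

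The main technical obstacle is justifying that the infimum $\bar t_i$ is attained and that $x_h$ is constant on $[t_i,\bar t_i]$. Both points rely on the specific structure of the sweeping differential inclusion (the unit-sphere condition on the density, forcing $|dx_h|$ to vanish on interior-time sets) combined with the geometric control provided by the Hausdorff continuity of $C_h$ and Lemma~\ref{lemma_partition}. Once these two facts are in hand, the reduction to Proposition~\ref{prop_var_cm} on each subinterval, together with the uniform bounds furnished by Lemma~\ref{lem_eq_fam}, yields the desired uniform estimate.
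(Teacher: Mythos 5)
Your proof is correct and follows essentially the same route as the paper: both arguments combine Lemma~\ref{lem_eq_fam} with Proposition~\ref{prop_var_cm} to get the uniform bound $\rho/2$ on intervals of length less than $\delta$ anchored at a boundary point, and use the fact that the variation vanishes while $x_h$ stays in the interior of $C_h(t)$, yielding a bound of order $\rho T/(2\delta)$ independent of $h\in A$. The only differences are organizational and in your favor: you use a fixed partition of mesh smaller than $\delta$ instead of the paper's adaptive sequence of times, and you explicitly justify (via the unit-norm density argument and Lemma~\ref{lemma_partition}) the zero-variation of interior segments and the attainment of the first boundary-touching time, two points the paper's proof states without detail.
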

\begin{proof}
Fix $h\in A$ and set $C_h := C-h$. Let $\delta,\mathfrak{r}>0$ be those given by Lemma \ref{lem_eq_fam}. For any $t\in [0,T[$ with $x_h(t)\in\bd C_h(t)$, there exists $z\in \H$ such that $\mathbb{B}_{\mathfrak{r}}(z)\subset C_h(s)$ and $\frac{1}{\rho}(\|z-x_h(t)\| + \mathfrak{r})^2<\frac{\mathfrak{r}}{2}$ for all $s\in ]t-\delta,t+\delta[\cap [0,T]$.  Proposition \ref{prop_var_cm} then yields 
\begin{equation}\label{var_boundary}
\operatorname{var}(x_h;[t,t+\delta])\leq \frac{\|x_h(t)-z\|^2}{2(\mathfrak{r}-\frac{1}{\rho} (\|z-x_h(t)\|+\mathfrak{r})^2)}\leq \frac{\|x_h(t)-z\|^2}{\mathfrak{r}}\leq \frac{\rho}{2}.
	\end{equation}
Let $I := \{t\in [0,T]: x_h(t)\in \inte C_h(t)\}$, which is an open set of $[0,T]$. For $t\in I$, define $\ell(t) = \inf\{s\geq t : x_h(s)\in \bd C_h(s)\}$. Then $\ell(t)>t$ and $\operatorname{var}(x_h;[t,\ell(t)]) = 0$. Set $t_0 := 0$. If $x_0\in I$, define  $t_1:= \min\{T,\ell(t_0)\}$ (otherwise, set $t_1 := t_0$). Since $x_h(t_1)\in\bd C_h(t_1)$,  set $t_2 := \min\{T,t_1 + \delta\}$. If $t_2\in I$, define $t_3 := \min\{T,\ell(t_2)\}$ (otherwise, set $t_3 := t_2$), and then define $t_4 := \min\{T,t_3 + \delta\}$. Proceeding inductively, let $n := \inf\{k\in\N : t_k = T\}$; by the construction $n<\infty$. Let $J:= \{j\leq n-1: x_h(t_j)\in \bd C_h(t_j)\}$. Then $\text{card}(J)\leq \lfloor T/\delta\rfloor + 1$ (where $\text{card}(J)$ denotes the cardinality of $J$). Therefore,
		\begin{equation*}
\operatorname{var}(x_h;[0,T])= \sum_{j\in J} \operatorname{var}(x_h;[t_j,t_{j+1}])\leq \text{card}(J)\frac{\rho}{2}\leq \frac{\rho}{2}(\lfloor T/\delta \rfloor + 1),
		\end{equation*}
where we used \eqref{var_boundary}. Consequently,  $\sup_{h\in A} \operatorname{var}(x_h;[0,T])\leq \frac{\rho}{2}(\lfloor T/\delta \rfloor + 1)$, which proves the result.
	\end{proof}

\begin{remark}\label{rmk-x0-independent}
It is worth noting that the estimate for $\sup_{h\in A} \operatorname{var}(x_h;[0,T])$ obtained in Proposition \ref{prop-bv-mc} does not depend on $x_0\in C(0)$. In fact, we have 
$$\sup_{x_0\in C(0)}\sup_{h\in A} \operatorname{var}(x_h(\cdot;x_0);[0,T])<\infty,$$
where $x_h(\cdot;x_0)$ denotes the solution of \eqref{sp1h} starting from $x_0$. 
\end{remark}
\begin{proposition}\label{lemma-cont-sol}
Suppose that $C\colon [0,T]\tto \H$ is a set-valued mapping satisfying \ref{H1mgh}, \ref{H2mgh} and \ref{H4mgh}. Let $A\subset\mathcal{C}_0([0,T];\H)$ be an equicontinuous set of functions. Then,
\begin{enumerate} 
\item[(a)] There is a constant $\mathscr{C}>0$ such that, for all $x_0,y_0\in C(0)$ and $u,v\in A$, 
\begin{equation*}
\|x_u(\cdot;x_0)-x_v(\cdot;y_0)\|^2_\infty\leq \mathscr{C}\left(\|x_0-y_0\|^2 + \|u-v\|_\infty + \|u-v\|_\infty^2\right).
\end{equation*}
\item[(b)] Let $(u_n)\subset \mathcal{C}_0([0,T];\H)$ be a sequence converging to $u\in\mathcal{C}_0([0,T];\H)$ uniformly, and let $(x_0^n)\subset C(0)$ converging to $x_0\in C(0)$. Then $x_{u_n}(\cdot;x_0^n)\to x_u(\cdot;x_0)$ uniformly on $[0,T]$. 
		\end{enumerate}
Consequently, the operator $\mathscr{F}\colon \mathcal{C}_0([0,T];\H)\times C(0)\to \mathcal{C}_{BV}([0,T];\H)$ defined by $\mathscr{F}(u,x_0) := x_u(\cdot;x_0)$ is continuous. 
	\end{proposition}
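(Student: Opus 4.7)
I would derive a Moreau-type inequality from the $\rho$-uniform prox-regularity of the sets $C(t)$, and close it via Grönwall's lemma (Lemma \ref{gronwall-lemma}), using the uniform total-variation bound supplied by Proposition \ref{prop-bv-mc} together with Remark \ref{rmk-x0-independent}. Write $x := x_u(\cdot;x_0)$, $y := x_v(\cdot;y_0)$, $\xi := x-y$, $\eta := u-v$, and note $\xi(0) = x_0-y_0$ since $u(0)=v(0)=0$. The translation property of the proximal normal cone rewrites \eqref{sp1h} as $(x+u)(t),(y+v)(t)\in C(t)$ with
\[
-\tfrac{dx}{|dx|}(t)\in N^P(C(t);(x+u)(t)), \qquad -\tfrac{dy}{|dy|}(t)\in N^P(C(t);(y+v)(t)),
\]
unit Radon–Nikodym densities holding $|dx|$-a.e.\ and $|dy|$-a.e.\ respectively, by \eqref{normal-distancia}.

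The next step is to apply the $\rho$-uniform prox-regular inequality at $(x+u)(t)$ with test point $(y+v)(t)$, integrate against $|dx|$, perform the symmetric operation with roles reversed against $|dy|$, and sum. This yields
\[
\int_0^t \langle \xi + \eta,\, d\xi\rangle \le \tfrac{1}{2\rho}\int_0^t \|\xi+\eta\|^2\, d(|dx|+|dy|).
\]
Since $\xi\in\mathcal{C}_{BV}([0,T];\H)$ is continuous, Moreau's chain rule gives $\int_0^t\langle \xi, d\xi\rangle = \tfrac{1}{2}(\|\xi(t)\|^2 - \|x_0-y_0\|^2)$; the cross term is bounded in modulus by $\|u-v\|_\infty(|dx|+|dy|)([0,t])$; and the right-hand side splits as $\tfrac{1}{\rho}\int_0^t\|\xi\|^2 d(|dx|+|dy|) + \tfrac{1}{\rho}\|u-v\|_\infty^2(|dx|+|dy|)([0,t])$ thanks to $\|\xi+\eta\|^2\le 2\|\xi\|^2 + 2\|u-v\|_\infty^2$.

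Proposition \ref{prop-bv-mc} combined with Remark \ref{rmk-x0-independent} then furnishes a constant $M=M(A)<\infty$ such that $(|dx|+|dy|)([0,T])\le 2M$ uniformly in $x_0,y_0\in C(0)$ and $u,v\in A$. Rearranging produces
\[
\|\xi(t)\|^2 \le \|x_0-y_0\|^2 + 4M\|u-v\|_\infty + \tfrac{4M}{\rho}\|u-v\|_\infty^2 + \tfrac{2}{\rho}\int_0^t \|\xi\|^2\, d(|dx|+|dy|),
\]
and Lemma \ref{gronwall-lemma} applied to the continuous function $t\mapsto \|\xi(t)\|^2$ with the finite nonatomic measure $\tfrac{2}{\rho}(|dx|+|dy|)$ delivers (a) with $\mathscr{C} := \max\{1,4M,4M/\rho\}\exp(4M/\rho)$.

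For (b), the family $\{u_n\}_n\cup\{u\}$ is equicontinuous: the uniform convergence absorbs the tails, leaving finitely many $u_n$ together with $u$ (each uniformly continuous on $[0,T]$) that determine the equicontinuity modulus. Applying (a) to this family and to the initial data $x_0^n, x_0\in C(0)$ gives $\|x_{u_n}(\cdot;x_0^n) - x_u(\cdot;x_0)\|_\infty \to 0$, and the continuity of $\mathscr{F}$ as a map into $\mathcal{C}_{BV}([0,T];\H)$ endowed with the sup-norm topology is then immediate. The main technical hurdle is the measure-theoretic lifting of Step 2: passing from the pointwise prox-regular inequality to its integrated form with respect to the (mutually singular) measures $|dx|, |dy|$, which hinges on identifying $-dx/|dx|$ as a \emph{unit} selection of the proximal normal cone via \eqref{normal-distancia}, together with Moreau's chain rule for continuous BV trajectories.
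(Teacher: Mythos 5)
Your proposal is correct and follows essentially the same route as the paper's proof: both exploit the prox-regular inequality at the translated points, integrate against the variation measures $|dx_u|$ and $|dx_v|$ (the paper works with their sum $\mu$ and Radon--Nikodym densities, which is the same computation), invoke the uniform variation bound from Proposition \ref{prop-bv-mc} and Remark \ref{rmk-x0-independent}, and close with Lemma \ref{gronwall-lemma}, then deduce (b) from equicontinuity of $\{u_n\}_n\cup\{u\}$. The only quibble is your parenthetical description of $|dx|$ and $|dy|$ as mutually singular, which is in general false but plays no role in the argument.
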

\begin{proof}
By Proposition \ref{prop-bv-mc} and Remark \ref{rmk-x0-independent}, we have $$
\displaystyle\mathscr{V} := \sup_{x_0\in C(0)}\sup_{h\in A}\operatorname{var}(x_h(\cdot;x_0);[0,T])<\infty.
$$
(a) Take $u,v\in A$ and $x_0,y_0\in C(0)$. Consider the solution $x_u(\cdot;x_0)$ of $(\mathsf{SP}_{u})$ starting from $x_0$ and the solution $x_v(\cdot;y_0)$ of  $(\mathsf{SP}_{v})$ starting from $y_0$. For brevity, we write $x_u := x_u(\cdot;x_0)$ and $x_v := x_v(\cdot;y_0)$. Define the finite measure $\mu := |dx_u| + |dx_v|$. Observe that $|dx_u|$-a.e. $-\frac{dx_u}{|dx_u|}(t)\in N^P(C(t)-u(t);x_u(t))$, due to the uniform prox-regularity of $C(t)$ and the fact that for all $t\in [0,T]$, $x_v(t) \in C(t)-v(t)$ one has $|dx_u|$-a.e.
        \begin{equation*}
				\left\langle -\frac{dx_u}{|dx_u|}(t),x_v(t)+v(t)-( x_u(t) + u(t))\right\rangle 
				\leq \frac{1}{2\rho}\|x_v(t)+v(t)-( x_u(t) + u(t))\|^2
		\end{equation*}
Since $|dx_u|\ll \mu$, by Radon-Nikodym Theorem, we have 
    \begin{equation*}
			\begin{aligned}
				&\int_{0}^t \left\langle -\frac{dx_u}{d\mu}(t),x_v(t)+v(t) - (x_u(t) + u(t)) \right\rangle 
				d\mu \\
				&\quad \quad\quad \quad\quad \quad\quad \quad\quad \quad\quad \quad \leq  \ \frac{1}{2\rho}\int_{0}^t\|x_u(t)+u(t) - (x_v(t) + v(t))\|^2  |dx_u|.
			\end{aligned}
		\end{equation*}
        Similarly, we obtain
        \begin{equation*}
			\begin{aligned}
				&\int_{0}^t \left\langle -\frac{dx_v}{d\mu}(t),x_u(t)+u(t) - (x_v(t) + v(t)) \right\rangle 
				d\mu \\
				&\quad \quad\quad \quad\quad \quad\quad \quad\quad \quad\quad \quad \leq  \ \frac{1}{2\rho}\int_{0}^t\|x_u(t)+u(t) - (x_v(t) + v(t))\|^2  |dx_v|.
			\end{aligned}
		\end{equation*}
        Summing the last two inequalities we get
\begin{equation*}
			\begin{aligned}
				&\int_{0}^t \left\langle -\frac{dx_u}{d\mu}(t)+\frac{dx_v}{d\mu}(t),x_v(t)+v(t) - (x_u(t) + u(t)) \right\rangle 
				d\mu \\
				&\quad \quad\quad \quad\quad \quad\quad \quad\quad \quad\quad \quad \leq  \ \frac{1}{2\rho}\int_{0}^t\|x_u(t)+u(t) - (x_v(t) + v(t))\|^2  d\mu.
			\end{aligned}
		\end{equation*}
This implies that, for all $t\in [0,T]$,
		\begin{equation*}
			\begin{aligned}  
\frac{1}{2}\|x_u(t)-x_v(t)\|^2\leq & \ \frac{1}{2}\|x_0-y_0\|^2 + \int_{0}^{t}  \left\langle -\frac{dx_u}{d\mu}(s) + \frac{dx_v}{d\mu}(s),u(s) -v(s) \right\rangle d\mu\\
				&+\frac{1}{2\rho}\int_{0}^t \|x_u(s)-x_v(s) + (u(s)-v(s))\|^2d\mu\\
				\leq & \ \frac{1}{2}\|x_0-y_0\|^2 + 2\|u-v\|_\infty(\operatorname{var}(x_u;[0,t]) + \operatorname{var}(x_v;[0,t]))\\
				& + \frac{2\mathscr{V}}{\rho}\|u-v\|_\infty^2 + \frac{1}{\rho}\int_{0}^t\|x_u(s)-x_v(s)\|^2 d\mu\\
				\leq & \ \frac{1}{2}\|x_0-y_0\|^2 +  4\mathscr{V}\|u-v\|_\infty + \frac{2\mathscr{V}}{\rho}\|u-v\|_\infty^2\\
				& + \frac{1}{\rho}\int_{0}^t\|x_u(s)-x_v(s)\|^2 d\mu.
			\end{aligned}
		\end{equation*}
From Gronwall's inequality (see Lemma \ref{gronwall-lemma}), for all $t\in [0,T]$
\begin{equation*}
\|x_u(t)-x_v(t)\|^2\leq \exp\left(\frac{4\mathscr{V}}{\rho}\right)\left(\|x_0-y_0\|^2 + 8\mathscr{V}\|u-v\|_\infty + \frac{4\mathscr{V}}{\rho}\|u-v\|_\infty^2\right).
\end{equation*}
Hence $\mathscr{C} := \exp\left(\frac{4\mathscr{V}}{\rho}\right)\max\{1,8\mathscr{V},4\mathscr{V}/\rho\}$ satisfies the claim. \\
(b) The set $\mathscr{B} := \{u_n\}_{n\in\N}\cup\{u\}$ is compact in $\mathcal{C}([0,T];\H)$ for the uniform topology; hence $\mathscr{B}$ is equicontinuous (see \cite[Theorem 47.1]{MR3728284}). Thus, using (a), we obtain $x_{u_n}(\cdot;x_0^n)\to x_u(\cdot;x_0)$ uniformly.
	\end{proof}

	\section{Stochastic Perturbation of the Sweeping Process}\label{sec7}
	
	In this section, $\H$ stands for a $d$-dimensional Euclidean space, i.e., $\H = \R^d$. Let $(\Omega, \mathcal{F},\mathbb{P})$ be a probability space and let $B$ be an $\ell$-dimensional Brownian motion. 
    Formally, we are interested in the well-posedness of the following stochastic sweeping process  
	\begin{equation}\label{spe-gen}
		\left\{\begin{array}{l}
			dX_t \in -N^P(C(t);X_t)dt + \sigma(t,X_t)dB_t + f(t,X_t)dt,\\
			X_0 = \xi,
		\end{array}\right.
	\end{equation}
	where $\xi\colon \Omega\to C(0)$ represents a random initial condition.  
    A reasonable formulation is to look for a pair of processes $(X_t, K_t)_{t\in [0,T]}$ such that 
	\begin{equation}\label{SPEn}
		\left\{\begin{array}{l}
        X_t \in C(t), \forall t \in[0, T],\\
			X_t=K_t + \int_0^t \sigma(s,X_s)dB_s + \int_0^tf(s,X_s)ds, \forall t\in [0,T]\\
			\frac{dK_t}{|dK_t|} \in -N^P(C(t);X_t),\ \left|dK\right| \text {-a.e., }\\
			X_0 = \xi.
		\end{array}\right.
	\end{equation}
 The inclusion in the third equation of \eqref{SPEn} is often formulated in an alternative way: for all $t\in [0,T]$, 
\begin{equation*} 
\var(K;[0,t]) = \int_0^t \mathbbm{1}_{\{X(\cdot)\in \bd C(\cdot)\}}(s) |dK_s|\text{ and } K_t = \int_0^t \zeta_s |dK_s|,
\end{equation*}
 where $|dK_s|$-a.e. $\zeta_s\in N(C(s);X_s)$ (see, e.g., \cite[Definition 3.8]{MR2812587} and \cite[eq. 11]{MR745330}). It is readily verified that these formulations are equivalent.  Defining $\mathcal{R}(X)_t := \int_0^t \sigma(s,X_s)dB_s + \int_0^tf(s,X_s)ds$, \eqref{SPEn} can be rewritten as
    \begin{equation}\label{SPEnnnn}
		\left\{\begin{array}{l}
        X_t \in C(t), \forall t \in[0, T],\\
			X_t=K_t + \mathcal{R}(X)_t, \forall t\in [0,T]\\
			\frac{dK_t}{|dK_t|} \in -N^P(C(t)-\mathcal{R}(X)_t;K_t),\ \left|dK\right| \text {-a.e., }\\
			X_0 = \xi.
		\end{array}\right.
	\end{equation}
    For every $\eta\in C(0)$ and $h\in \mathcal{C}_0([0,T];\R^d)$, we define (when it exists) the process $Z^h_\eta$ by
	\begin{equation}\label{SP_Yn}
		\left\{\begin{array}{l}
			\frac{dZ^{h}_{\eta}}{|dZ^{h}_\eta|}(t) \in -N^P(C(t)-h(t);Z^{h}_{\eta}(t)), \ \left|dZ^h_\eta\right| \text {-a.e., }\\
			Z^h_{\eta}(0) = \eta.
		\end{array}\right.
	\end{equation}
	The differential inclusion above is commonly known as the Skorokhod problem with normal reflection on the boundary of $C$ (now dependent on time), where the solution is typically considered as $Z^h_\eta + h$. The usual formulation of Skorokhod problem is actually described in a different way (see, e.g., \cite{MR873889, MR529332, MR745330, MR2812587, Skorokhod1961}), but in our case the cone of reflection is the normal cone, hence \eqref{SP_Yn} is an equivalent form. By virtue of \eqref{SPEnnnn}, the well-posedness of \eqref{SPEn} depends directly on the existence of solutions to \eqref{SP_Yn}. Our goal is to study the existence and uniqueness of solutions to \eqref{SPEn}, assuming that the moving set $C$ takes closed and uniformly prox-regular values, and it only moves continuously with respect to the Hausdorff distance. In this scenario, the problem \eqref{SP_Yn} has a unique solution provided the satisfaction of \ref{H4mgh} (see Corollary \ref{mc-ch-cor}).
    
    We now introduce some notation. As usual, for a  random variable $Z$, $\sigma(Z)$ denotes the $\sigma$-algebra generated by $Z$. When $(X_t)_{t\geq 0}$ is a continuous process, we set $\mathcal{F}_t^X := \sigma(X_s:s\leq t)$. As usual, for two given $\sigma$-algebras $\mathcal{F}$ and $\mathcal{G}$, $\mathcal{F}\vee\mathcal{G}$ denotes the smallest $\sigma$-algebra containing $\mathcal{F}$ and $\mathcal{G}$ simultaneously. For two continuous semimartingales $(M^1_t)_{t\geq 0}$ and $(M_t^2)_{t\geq 0}$, we denote their quadratic covariation by $t\mapsto [M^1,M^2]_t$, and use the standard shorthand $[M^1]_t := [M^1,M^1]_t$. Given two continuous stochastic processes $(X_t)_{t\geq 0}$ and $(Y_t)_{t\geq 0}$, if they share the same law we write $X\stackrel{\mathscr{L}}{=}Y$. In what follows, all the filtrations are supposed to satisfy the usual conditions, i.e. they are completed and right-continuous.
	
	Concerning $f\colon [0,T]\times \R^d\to \R^d$ and $\sigma\colon [0,T]\times \R^d\to \R^{d\times \ell}$, assume they are Carathéodory mappings, i.e. for all $t\in[0,T]$, $f(t,\cdot),\sigma(t,\cdot)$ are continuous and for all $x\in \R^d$, $f(\cdot,x),\sigma(\cdot,x)$ are measurable. Moreover, we assume that there exist $\beta_f\in L^p([0,T];\R_+)$ with $p> 1$ and $\beta_\sigma\in L^q([0,T];\R_+)$ with $q>2$ such that 
    \begin{equation}\label{eqn_bounded_fsigma}
        \sup_{x\in \R^d}\|f(t,x)\|\leq \beta_f(t) \text{ and } \sup_{x\in \R^d}\|\sigma(t,x)\|\leq \beta_\sigma(t) \text{ for a.e. }t\in [0,T].
	\end{equation}
    
    As usual, $\sigma_{\cdot,j}$ denotes the $j$-th column of $\sigma$ and $\sigma_{i,\cdot}$ denotes its $i$-th row. 
As in the theory of stochastic differential equations, we distinguish different types of solutions. 

    \begin{definition}[Weak solution]
        A \emph{weak solution} of \eqref{spe-gen} is a triple $(B,X,K)$ defined on $(\Omega, \mathcal{F},(\mathcal{F}_t), \mathbb{P})$ such that:
        \begin{enumerate}[label=(\roman*)]
			\item $(\Omega,\mathcal{F},(\mathcal{F}_t),\mathbb{P})$ is a filtered probability space.
			\item $B$ is an $\ell$-dimensional $(\mathcal{F}_t)$-Brownian motion starting from $0$. 
			\item $X$ is a $(\mathcal{F}_t)$-adapted $\R^d$-valued process and $K$ is is a $(\mathcal{F}_t)$-adapted $\R^d$-valued finite variation process such that $(X,K)$ satisfy \eqref{SPEn}.
		\end{enumerate}
    \end{definition}

    \begin{definition}[Strong solution]
        Given a probability space $(\Omega, \mathcal{F}, \mathbb{P})$, an $\ell$ dimensional Brownian motion $B$ with its natural filtration $(\mathcal{F}^B_t)$, and a random initial condition $\xi\colon \Omega \to C(0)$ independent of $B$, a pair $(X,K)$ is called a \emph{strong solution} of \eqref{spe-gen} if $X$ and $K$ are $\R^d$-valued processes adapted to the filtration $(\mathcal{F}^B_t \vee \sigma(\xi))_{t\ge 0}$, satisfy \eqref{SPEn}, and such that $K$ is of finite variation.

    \end{definition}
	\begin{definition}[Pathwise uniqueness]
		\emph{Pathwise uniqueness} for \eqref{spe-gen} holds, if, for every filtered probability space $(\Omega,\mathcal{F},(\mathcal{F}_t),\mathbb{P})$ and every $(\mathcal{F}_t)$-Brownian motion $B$, any two weak solutions $X$ and $X'$ of the stochastic sweeping process with $\mathbb{P}(X_0 = X_0') = 1$ are indistinguishable, i.e.,  $\mathbb{P}(X_t= X_t' \textrm{ for all } t\in [0,T])=1.$
	\end{definition}

    We will show that, if \ref{H4mgh} holds, then \eqref{SPEn} admits a weak solution. Note that no assumptions on the boundedness or smoothness of $C$ are required.
	
	\begin{theorem}\label{weak-existence}
		Suppose that $C\colon [0,T]\tto \R^d$ satisfies \ref{H1mgh}, \ref{H2mgh} and \ref{H4mgh}. Let $\mu$ be an initial distribution on $C(0)$. Then there exists a weak solution to the stochastic sweeping process \eqref{SPEn} with initial distribution $\mu$.
	\end{theorem}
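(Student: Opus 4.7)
The plan is to combine an Euler–Maruyama time discretization with a weak compactness argument that leverages the continuity of the deterministic Skorokhod map $\mathscr{F}$ established in Proposition \ref{lemma-cont-sol}. I fix a probability space $(\Omega,\mathcal{F},\mathbb{P})$ carrying an $\ell$-dimensional Brownian motion $B$ and an independent $C(0)$-valued random variable $\xi\sim\mu$. For each $n\in\mathbb{N}$, set $t^n_k=kT/n$ and $\tau_n(s)=t^n_k$ for $s\in[t^n_k,t^n_{k+1})$, and construct $X^n$ inductively on the partition intervals. Given $X^n$ on $[0,t^n_k]$, the frozen-coefficient process
\[
\mathcal{R}^n_t = \int_0^t \sigma(s,X^n_{\tau_n(s)})\,dB_s + \int_0^t f(s,X^n_{\tau_n(s)})\,ds, \quad t\in[0,t^n_{k+1}],
\]
is adapted and depends only on past values of $X^n$. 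For each $\omega$, let $Z^n(\omega)$ be the pathwise solution of \eqref{sp1h} with $h=\mathcal{R}^n(\omega)$ and initial condition $\xi(\omega)$ provided by Corollary \ref{mc-ch-cor}, and set $X^n_t := Z^n_t + \mathcal{R}^n_t$; iterating in $k$ yields a progressively measurable pair $(X^n,Z^n)$ on $[0,T]$.

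Next, I would establish tightness of the laws of $(X^n,\mathcal{R}^n,B)$ on $\mathcal{C}([0,T];\R^d)^2\times\mathcal{C}([0,T];\R^\ell)$. From the uniform bounds \eqref{eqn_bounded_fsigma}, the Burkholder–Davis–Gundy inequality and the integrabilities $\beta_f\in L^p$, $\beta_\sigma\in L^q$ with $p>1$, $q>2$, standard moment estimates give a Kolmogorov-type modulus of continuity for $\mathcal{R}^n$ uniformly in $n$. Hence, for every $\varepsilon>0$ there exists a compact and equicontinuous set $A_\varepsilon\subset \mathcal{C}_0([0,T];\R^d)$ with $\mathbb{P}(\mathcal{R}^n\in A_\varepsilon)\ge 1-\varepsilon$ for every $n$. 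Applying Proposition \ref{prop-bv-mc} pathwise to the equicontinuous family $A_\varepsilon$ produces a deterministic bound on $\operatorname{var}(Z^n;[0,T])$ on the event $\{\mathcal{R}^n\in A_\varepsilon\}$, which combined with $X^n=Z^n+\mathcal{R}^n$ yields tightness of the desired laws.

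By Prokhorov's theorem I extract a weakly convergent subsequence; Skorokhod's representation theorem provides a new probability space carrying $(\tilde X^n,\tilde{\mathcal{R}}^n,\tilde B^n)$ with the same laws as $(X^n,\mathcal{R}^n,B)$ and converging almost surely uniformly on $[0,T]$ to $(\tilde X,\tilde{\mathcal{R}},\tilde B)$, where $\tilde B$ is a Brownian motion on the new space with respect to the natural augmented filtration. Since $\tilde Z^n:=\tilde X^n-\tilde{\mathcal{R}}^n$ equals $\mathscr{F}(\tilde{\mathcal{R}}^n,\tilde X^n_0)$ by construction and $\mathscr{F}$ is continuous by Proposition \ref{lemma-cont-sol}, we obtain $\tilde Z^n\to\tilde Z:=\mathscr{F}(\tilde{\mathcal{R}},\tilde X_0)$ uniformly, so $\tilde X=\tilde Z+\tilde{\mathcal{R}}$ and $\tilde Z$ satisfies the proximal normal inclusion encoded by \eqref{sp1h} with shift $\tilde{\mathcal{R}}$.

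The main technical obstacle is identifying $\tilde{\mathcal{R}}_t = \int_0^t \sigma(s,\tilde X_s)\,d\tilde B_s + \int_0^t f(s,\tilde X_s)\,ds$, since $\sigma$ is only continuous in $x$ and the stochastic integral is not a continuous functional of its integrand. Using \eqref{eqn_bounded_fsigma} together with the uniform modulus of continuity of the $\tilde X^n$ and the continuity of $\sigma(t,\cdot)$, I would first show that $\sigma(\cdot,\tilde X^n_{\tau_n(\cdot)})\to \sigma(\cdot,\tilde X_\cdot)$ in $L^2([0,T]\times\tilde\Omega)$ and then invoke a standard stability result for stochastic integrals (of Kurtz–Protter type) or, equivalently, a martingale-characterization argument, to identify the limit integrals. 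Setting $K:=\tilde Z$, the pair $(\tilde X,K)$ on the new filtered probability space with Brownian motion $\tilde B$ is a weak solution of \eqref{SPEn} with initial law $\mu$, completing the proof.
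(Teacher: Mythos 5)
Your proposal follows essentially the same route as the paper's proof: an Euler scheme that freezes the state at the partition points, reflection via the deterministic solution map of Corollary \ref{mc-ch-cor}, tightness of the driving part $\mathcal{R}^n$ via Burkholder--Davis--Gundy/Kolmogorov estimates, Skorokhod representation, continuity of $\mathscr{F}$ (Proposition \ref{lemma-cont-sol}) to pass to the limit in the constrained part, and a Kurtz--Protter-type stability result (together with a martingale/filtration argument) to identify the limiting stochastic integral --- the paper differs only in that it first regularizes the Carath\'eodory coefficients to jointly continuous ones, while you handle the limit directly through continuity in $x$ and dominated convergence. One inessential misstep: the uniform bound on $\operatorname{var}(Z^n;[0,T])$ from Proposition \ref{prop-bv-mc} does not by itself give tightness of the laws of $X^n$ in the uniform topology (bounded-variation balls are not relatively compact in $\mathcal{C}([0,T];\R^d)$), but this is harmless because, exactly as in the paper, tightness of $(\xi,B,\mathcal{R}^n)$ alone suffices: after the Skorokhod representation, $\tilde X^n=\mathscr{F}(\tilde{\mathcal{R}}^n,\tilde X^n_0)+\tilde{\mathcal{R}}^n$ converges automatically by the continuity of $\mathscr{F}$.
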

	\begin{proof} 
To begin, consider any probability space $(\Omega,\mathcal{F},\mathbb{P})$ rich enough to carry a random variable $x_0\colon \Omega\to C(0)$ with law $\mu$ and a Brownian motion $B$ on the same space, such that $x_0$ is independent of $B$.\\
First, regarding the hypotheses on $C$, Corollary \ref{mc-ch-cor} ensures that the operator $(\eta,h)\mapsto Z^h_\eta$ is well-defined on $C(0)\times \mathcal{C}_0([0,T]; \mathbb{R}^d)$ (see \eqref{SP_Yn}). By continuity of this operator (Proposition \ref{lemma-cont-sol}) and uniqueness of solutions, for every random variable $\eta\colon \Omega\to C(0)$ and every continuous process $U$ with $U(0) = 0$ a.e.,  the process $Z^U_\eta$ is continuous and adapted to the filtration $(\mathcal{F}^U_t\vee\sigma(\eta))$.\\
Since $f$ and $\sigma$ are Carathéodory mappings, from \cite[Theorem 2.3]{Kubinska2004} there are sequences of continuous functions $(\hat{f}_n\colon [0,T]\times \R^d\to \R^d)$ and $(\hat{\sigma}_n\colon [0,T]\times \R^d\to \R^{d\times\ell})$ such that for a.e. in $[0,T]$, there is $n_t\in\N$ such that $\hat{f}_n(t,\cdot) = f(t,\cdot)$ and $\hat{\sigma}_n(t,\cdot) = \sigma(t,\cdot)$ for all $n>n_t$. We can also approximate $\beta_f, \beta_\sigma$ by a sequence of continuous functions $(\beta_f^n)\subset L^p([0,T];\R_+)$ and $(\beta_\sigma^n)\subset L^q([0,T];\R_+)$ such that $\beta_f^n\to \beta_f$ and $\beta_\sigma^n\to \beta_\sigma$ a.e. on $[0,T]$ and there are $\hat\beta_f\in L^p([0,T];\R_+)$ and $\hat\beta_\sigma\in L^q([0,T];\R_+)$ such that $|\beta_f^n|\leq \hat{\beta}_f$ and $|\beta_\sigma^n|\leq \hat\beta_\sigma$ a.e. on $[0,T]$, thus we can take $f_n := \min\left\{1,\frac{\beta_f^n}{\|\hat f_n\|}\right\}\hat f_n$ and  $\sigma_n := \min\left\{1,\frac{\beta_\sigma^n}{\|\hat\sigma_n\|}\right\}\hat{\sigma}_n$, and we have $f_n,\sigma_n$ are continuous on $[0,T]\times \R^d$ and \eqref{eqn_bounded_fsigma} holds with $\hat{\beta}_f, \hat\beta_\sigma$ (instead of $\beta_f, \beta_\sigma$), not depending on $n\in \N$.

For $n\in\N$, consider the partition of $[0,T]$ given by $t_i^n = \frac{i}{n}T$, $i\in\{0,1,\ldots,n\}$. Set $Y_n(t) := \int_{t_0^n}^tf_n(s,x_0)ds + \int_{t_0^n}^t\sigma_n(s,x_0)dB_s$ on $[t_0^n,t_1^n]$, and define $X_n(t) :=Y_n(t) + Z^{Y_n}_{x_0}(t)$ on $[t_0^n,t_1^n]$. Extend $Y_n$ to $[t_0^n,t_2^n]$ by 
		\begin{equation*}
			Y_n(t) = Y_n(t_1^n) + \int_{t_1^n}^{t}f_n(s,X_n(t_1^n))ds + \int_{t_1^n}^{t}\sigma_n(s,X_n(t_1^n))dB_s
		\end{equation*}
		for $t\in ]t_1^n,t_2^n]$, and set $X_n = Y_n + Z^{Y_n}_{x_0}$ on $[t_0^n,t_2^n]$. Proceeding inductively,  for $]t_2^n,t_3^n]$  define
		\begin{equation*}
			Y_n(t) = Y_n(t_2^n) + \int_{t_2^n}^{t}f_n(s,X_n(t_2^n))ds + \int_{t_2^n}^{t}\sigma_n(s,X_n(t_2^n))dB_s,
		\end{equation*}
        and so on. Thus we construct $(X_n,Y_n)$ with $X_n = Y_n + Z^{Y_n}_{x_0}$ on $[0,T]$ and  
		\begin{equation*}
			Y_n(t) = Y_n(t_k^n) + \int_{t_k^n}^{t}f_n(s,X_n(t_k^n))ds + \int_{t_k^n}^{t}\sigma_n(s,X_n(t_k^n))dB_s,  t\in [t_k^n,t_{k+1}^n].
		\end{equation*}
		Equivalently, for $t\in [0,T]$,
		\begin{equation*}
			Y_n(t) = \int_{0}^t f_n(s,X_n(\delta_n(s)))ds + \int_{0}^t \sigma_n(s,X_n(\delta_n(s)))dB_s,
		\end{equation*}
		where $\delta_n(t) = t_k^n$ if $t\in [t_k^n,t_{k+1}^n[$ for some $k<n$ and $\delta_n(t) = t_n^n$ if $t=t_n^n$.\\
\emph{\textbf{Claim 1}.} There exist a subsequence $(n_k)_{k\in\N}$, a probability space $(\tilde{\Omega},\tilde{\mathcal{F}},\tilde{\mathbb{P}})$,  processes $\{(\tilde{x}_{n_k},\tilde{B}_{n_k},\tilde{Y}_{n_k})\}_{k}$ and $(\tilde{x}_0,\tilde{B},\tilde{Y})$ in $C(0)\times \mathcal{C}_0([0,T];\R^d)\times \mathcal{C}_0([0,T];\R^d)$ such that
			\begin{enumerate}
\item [(a)] $(\tilde{x}_{n_k},\tilde{B}_{n_k},\tilde{Y}_{n_k}) \stackrel{\mathscr{L}}{=} (x_0,B,Y_{n_k})$ for all $k\in\N$.
\item [(b)] $(\tilde{x}_{n_k},\tilde{B}_{n_k},\tilde{Y}_{n_k}) \to (\tilde{x}_0,\tilde{B},\tilde{Y})$ $\Tilde{\mathbb{P}}$-a.e. on $C(0)\times \mathcal{C}([0,T];\R^\ell)\times \mathcal{C}([0,T];\R^d)$.
		\end{enumerate}
\noindent \emph{Proof of the Claim 1.} By \cite[Theorem 1.45]{MR3308895}, it suffices to show that $(x_0,B,Y_n)$ is tight. Clearly $(x_0,B)$ is tight. We check tightness of $(Y_n)$. By \cite[Theorem I.4.2 \& I.4.3]{zbMATH03780265} (see also \cite[Problem 4.11, p. 64]{MR1121940}), it is enough to prove 
		\begin{enumerate}
			\item [(i)] There exist $\gamma>0$ and $C_1>0$ such that $\mathbb{E}(\|Y_n(0)\|^\gamma)\leq C_1$ for all $n\in\N$.
			\item [(ii)] There exist $\alpha>0$, $\beta>0$, and $C_2>0$ such that for all $t,s\in [0,T]$ and $n\in\N$, 
        $$\mathbb{E}(\|Y_n(t)-Y_n(s)\|^\alpha)\leq C_2|t-s|^{1+\beta}.$$
		\end{enumerate}
		The first assertion is immediate since $Y_n(0) = 0$. To check (ii), take $$\kappa:= \min\left\{1-\frac{1}{p},\frac{1}{2}-\frac{1}{q}\right\}$$ and choose any $\alpha>0$ such that $\alpha\kappa>1$ and $\beta := \alpha\kappa-1$. We observe that
		\begin{equation*}
			\|Y_n(t)-Y_n(s)\|^\alpha \leq d^{\alpha/2 - 1}\sum_{i=1}^d |Y_n(t)_i-Y_n(s)_i|^\alpha.
		\end{equation*}
For $t,s\geq t_0^n$, by Burkholder-Davis-Gundy inequalities (see, e.g., \cite[Problem 3.29, p. 166]{MR1121940}) and Hölder inequality, for all $i,j$,
		\begin{equation*}
			\begin{aligned}
				&\mathbb{E}\left( \left|\int_{s}^{t}\sigma_n(\tau,X_n(\delta_n(\tau)))_{ij}dB_\tau^j\right|^\alpha \right)\\
				&\quad\quad   \quad\quad\quad\quad\quad\quad\quad\quad\quad\quad\quad \leq   C\cdot \mathbb{E}\left(\left(\int_{s}^{t}\sigma_n(\tau,X_n(\delta_n(\tau)))^2_{ij} d\tau\right)^{\alpha/2}\right)\\
				&\quad\quad \quad\quad\quad\quad\quad\quad \quad\quad\quad\quad\quad \leq   C\left(\int_s^t\hat{\beta}_\sigma(\tau)^2d\tau\right)^{\alpha/2}\\
                &\quad\quad \quad\quad\quad\quad\quad\quad \quad\quad\quad\quad\quad \leq   C\left(\int_s^t\hat{\beta}_\sigma(\tau)^qd\tau\right)^{\alpha/q}|t-s|^{\alpha\left(\frac{1}{2}-\frac{1}{q}\right)},
			\end{aligned}
		\end{equation*}
		for some $C>0$ (only dependent of $\alpha$), and we also have 
        \begin{equation*}
            \begin{aligned}
            \left(\int_{s}^{t} |f_n(\tau,X_n(\delta_n(\tau)))_i|d\tau\right)^\alpha &\leq \left(\int_s^t \hat\beta_f(\tau)d\tau\right)^\alpha \\
            &\leq \left(\int_s^t \hat\beta_f(\tau)^pd\tau\right)^{\alpha/p}|t-s|^{\alpha\left(1-\frac{1}{p}\right)}.
            \end{aligned}
        \end{equation*}
        Since $\beta_f\in L^p([0,T];\R_+)$ and  $\beta_\sigma\in L^q([0,T];\R_+)$, there exists a constant $\mathcal{K}>0$ (independent of $t,s,n$) such that 
		\begin{equation*}
				\mathbb{E}(\|Y_n(t) - Y_n(s)\|^\alpha)
				\leq  \mathcal{K}|t-s|^{\alpha\kappa} = \mathcal{K}|t-s|^{\beta+1}.
		\end{equation*}
		Thus $(Y_n)$ is tight, and therefore $(x_0,B,Y_n)$ is tight.\\ 
		Since $(x_0,B,Y_{n_k})\stackrel{\mathscr{L}}{=} (\tilde{x}_{n_k},\tilde B_{n_k},\tilde{Y}_{n_k})$, the continuity of $(x,\eta)\mapsto Z_{x}^\eta$ and the measurability of stochastic integral operator (see, e.g., \cite[Theorem 3]{WOS:A1995QX25200002}) yield that $\tilde{\mathbb{P}}$-a.e., for all $t\in [0,T]$,
        \begin{equation}\label{eqn-Ynk-bfr-limit}
            \tilde{Y}_{n_k}(t) = \int_{0}^t f_{n_k}(s,\tilde{X}_{n_k}(s))ds + \int_{0}^t \sigma_{n_k}(s,\tilde{X}_{n_k}(s))d\tilde{B}_{n_k}(s),
        \end{equation}
		where $\tilde{X}_{n_k} := Z^{\tilde{Y}_{n_k}}_{\tilde{x}_{n_k}} + \tilde{Y}_{n_k}$.\\
        \emph{\textbf{Claim 2}.} $\forall k\in \N$, $(X_{n_k},B) \stackrel{\mathscr{L}}{=} (\tilde{X}_{n_k},\tilde{B}_{n_k})$, and $B$ is a $(\mathcal{F}^{X_{n_k},B}_t)$-Brownian motion.\\ 
	\emph{Proof of Claim 2.}
        By construction, for all $n\in \N$, $X_n$ is adapted to $(\mathcal{F}^{B}_t\vee\sigma(x_0))$. Since $B$ and $x_0$ are independent, $B$ is an $(\mathcal{F}_t^B\vee \sigma(x_0))$-Brownian motion; hence,  for every $k\in \N$, $B$ is a $(\mathcal{F}^{X_{n_k},B}_t)$-Brownian motion. Moreover, $(x_0,Y_{n_k},B) \stackrel{\mathscr{L}}{=} (\tilde{x}_{n_k},\tilde{Y}_{n_k},\tilde{B}_{n_k})$ and $(x,\eta)\mapsto \eta + Z^{\eta}_x$ is continuous on $C(0)\times \mathcal{C}([0,T];\R^d)$ (see Proposition \ref{lemma-cont-sol}), so  $(X_{n_k},B) \stackrel{\mathscr{L}}{=} (\tilde{X}_{n_k},\tilde{B}_{n_k})$.\\
		\emph{\textbf{Claim 3}.} With $(\tilde{x}_0,\tilde{B},\tilde{Y})$ as above, the pair $(Z^{\tilde{Y}}_{\tilde{x}_0} + \tilde{Y}, Z^{\tilde{Y}}_{\tilde{x}_0})$ is a (weak) solution of \eqref{SPEn} on $(\tilde{\Omega},\tilde{\mathcal{F}},\tilde{\mathbb{P}})$. Let us define the process $\tilde{X}$ by $\tilde{X} := Z^{\tilde{Y}}_{\tilde{x}_0} + \tilde{Y}$.\\ 
\emph{Proof of the Claim 3.} Since $\tilde{Y}_{n_k}\to \tilde{Y}$ uniformly $\tilde{\mathbb{P}}$-a.e., Proposition \ref{lemma-cont-sol} yields  $Z^{\tilde{Y}_{n_k}}_{\tilde{x}_{n_k}}\to Z^{\tilde{Y}}_{\tilde{x}_0}$ uniformly $\tilde{\mathbb{P}}$-a.e., hence $\tilde{X}_{n_k}\to \tilde{X}$ uniformly $\tilde{\mathbb{P}}$-a.e. We now prove that,  $\tilde{\mathbb{P}}$-a.e., for all $t\in [0,T]$,
		\begin{equation}\label{eqn_lim}
			\tilde{Y}(t) = \int_{0}^t f(s,\tilde{X}(s))ds + \int_{0}^t \sigma(s,\tilde{X}(s))d\tilde{B}(s).
		\end{equation}
On the one hand, by uniform convergence $\tilde{X}_{n_k}\to \tilde{X}$, we have $$f_{n_k}(s,\tilde{X}_{n_k}(\delta_{n_k}(s)))\to f(s,\tilde{X}(s))$$ a.e. on $[0,T]$ as $k\to+\infty$. Since $(f_{n_k})$ are uniformly upper bounded by $\hat\beta_f$, the dominated convergence theorem implies that, $\tilde{\mathbb{P}}$-a.e., for all $t\in [0,T]$, 
		\begin{equation}\label{conv-f-weak}
			\lim_{k\to \infty}\int_{0}^t f_{n_k}(s,\tilde{X}_{n_k}(\delta_{n_k}(s)))ds =  \int_0^t f(s,\tilde{X}(s))ds.
		\end{equation}
        On the other hand, by virtue of Claim 3 and \cite[Corollary 1.96]{MR3308895}, for each $k$, $\Tilde{B}_{n_k}$ is a $(\mathcal{F}^{\tilde{B}_{n_k},\tilde{X}_{n_k}}_t)$-Brownian motion and $\tilde{B}$ is a $(\mathcal{F}^{\tilde{B},\tilde{X}}_t)$-Brownian motion. Hence, by \cite[Proposition 2.15]{MR3308895}, 
		\begin{equation*}
			\int_{0}^{t}\sigma_{n_k}(s,\tilde{X}_{n_k}(\delta_{n_k}(s)))d\tilde{B}_{n_k}(s)\to \int_{0}^{t}\sigma(s,\tilde{X}(s))d\tilde{B}(s)
		\end{equation*}
		uniformly in probability on $[0,T]$. Passing to a subsequence (see, e.g., \cite[Theorem 9.2.1]{MR1932358}), we obtain,  $\tilde{\mathbb{P}}$-a.e., for all $t\in [0,T]$,
        \begin{equation}\label{conv-sigma-weak}
            \lim_{i\to \infty}\int_{0}^{t}\sigma_{n_{k_i}}(s,\tilde{X}_{n_{k_i}}(\delta_{n_{k_i}}(s)))d\tilde{B}_{n_{k_i}}(s)= \int_{0}^{t}\sigma(s,\tilde{X}(s))d\tilde{B}(s).
        \end{equation}
         Then, taking into account the convergences \eqref{conv-f-weak} and \eqref{conv-sigma-weak}, we pass to the limit in \eqref{eqn-Ynk-bfr-limit} along the subsequence $(n_{k_i})$ and conclude that \eqref{eqn_lim} holds for all $t\in [0,T]$,  $\tilde{\mathbb{P}}$-a.e.
	\end{proof}
    

	Now, we establish pathwise uniqueness to \eqref{SPEn} by imposing that $f$ is one-sided Lipschitz and $\sigma$ is Lipschitz; that is, there exist $$\mathscr{L}_f\in L^1([0,T];\R_+) \text{ and }\mathscr{L}_\sigma\in L^2([0,T];\R_+)$$ such that, for all $t\in [0,T]$ and all $x,y\in \R^d$, 
	\begin{equation}\label{L-lips-fsig1}
		\langle x-y,f(t,x)-f(t,y)\rangle\leq \mathscr{L}_f(t)\|x-y\|^2.
	\end{equation}
and 
    \begin{equation}\label{L-lips-fsig2}
		\|\sigma(t,x)-\sigma(t,y)\|\leq \mathscr{L}_\sigma(t)\|x-y\|.
	\end{equation}
	
	\begin{theorem}\label{pathwise-uniqueness}
		 Assume that \ref{H1mgh} holds. Consider two weak solutions $(X^1,K^1)$, $(X^2,K^2)$ of \eqref{SPEn}, defined on the same filtered probability space $(\Omega,\mathcal{F},(\mathcal{F}_t),\mathbb{P})$ with the same Brownian motion $B$, such that $\mathbb{P}(X_0^1 = X_0^2) = 1$. Suppose that $f,\sigma$ satisfy \eqref{L-lips-fsig1} and \eqref{L-lips-fsig2}. Then 
        $$
        \mathbb{P}(X^1_t = X^2_t, \textrm{ for all } t\in [0,T]) = 1.
        $$    
	\end{theorem}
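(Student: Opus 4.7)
The plan is to apply Itô's formula to $Y_t := \|X^1_t - X^2_t\|^2$, treating each $X^i = \xi + K^i + \mathcal{R}(X^i)$ as a continuous semimartingale with continuous finite-variation part $K^i$ and continuous semimartingale part $\mathcal{R}(X^i)$. Since $[K^1-K^2]\equiv 0$, the quadratic variation of $X^1-X^2$ reduces to $\int_0^t \|\sigma(s,X^1_s)-\sigma(s,X^2_s)\|^2 ds$, and Itô yields, using $Y_0 = 0$,
\[
Y_t = 2\int_0^t \langle X^1_s - X^2_s,\, dK^1_s - dK^2_s\rangle + 2\int_0^t \langle X^1_s - X^2_s,\, f(s,X^1_s) - f(s,X^2_s)\rangle\, ds + \int_0^t \|\sigma(s,X^1_s)-\sigma(s,X^2_s)\|^2 ds + M_t,
\]
where $M_t := 2\int_0^t\langle X^1_s - X^2_s,\,(\sigma(s,X^1_s)-\sigma(s,X^2_s))\,dB_s\rangle$ is a continuous local martingale.

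The next step is to bound the sweeping contribution using $\rho$-uniform prox-regularity of $C(s)$. Since $-\frac{dK^i}{|dK^i|}(s)$ is, for $|dK^i|$-a.e.\ $s$, a unit vector in $N^P(C(s);X^i_s)$, and $X^{3-i}_s\in C(s)$, prox-regularity gives
\[
\langle X^1_s - X^2_s,\, dK^1_s - dK^2_s\rangle \leq \frac{1}{2\rho}\|X^1_s - X^2_s\|^2 \bigl(|dK^1|_s + |dK^2|_s\bigr).
\]
Combining this with the one-sided Lipschitz assumption on $f$ and the Lipschitz assumption on $\sigma$, and introducing the continuous, nondecreasing process
\[
A_t := \frac{1}{\rho}\bigl(|K^1|([0,t]) + |K^2|([0,t])\bigr) + \int_0^t \bigl(2\mathscr{L}_f(s) + \mathscr{L}_\sigma(s)^2\bigr)\, ds,
\]
I obtain the inequality of signed measures $dY_t \leq Y_t\, dA_t + dM_t$. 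Applying integration by parts to $e^{-A_t} Y_t$ (no covariation term, since $A$ has finite variation) and using $Y_0 = 0$ gives the pathwise bound
\[
e^{-A_t} Y_t \leq \int_0^t e^{-A_s}\, dM_s.
\]

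To conclude, I localize with the stopping times $\tau_n := \inf\{t\in[0,T]: |K^1|([0,t]) + |K^2|([0,t]) \geq n\}\wedge T$ and $\rho_m := \inf\{t\in[0,T]: \|X^1_t\|\vee\|X^2_t\|\geq m\}\wedge T$. On $[0,\tau_n\wedge\rho_m]$, the integrand of $M$ has norm bounded by $4m\beta_\sigma(s)$, with $\beta_\sigma\in L^2([0,T];\mathbb{R}_+)$ since $q>2$ and $T<\infty$; together with $e^{-A_s}\leq 1$, the stopped integral $\int_0^{\cdot\wedge\tau_n\wedge\rho_m} e^{-A_s}dM_s$ is thus a square-integrable martingale starting from $0$. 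Taking expectation yields $\mathbb{E}[e^{-A_{t\wedge\tau_n\wedge\rho_m}} Y_{t\wedge\tau_n\wedge\rho_m}]\leq 0$, and the nonnegativity of the integrand forces $Y_{t\wedge\tau_n\wedge\rho_m} = 0$ almost surely. Since $K^i\in\mathcal{C}_{BV}([0,T];\mathbb{R}^d)$ and $X^i\in\mathcal{C}([0,T];\mathbb{R}^d)$ almost surely, one has $\tau_n,\rho_m\to T$ a.s., so $Y_t = 0$ a.s.\ for each fixed $t$; continuity of $Y$ then promotes this to the joint statement $\mathbb{P}(X^1_t = X^2_t \text{ for all } t\in[0,T]) = 1$. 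The main technical point is the measure-version of the hypomonotonicity estimate, which requires a measurable selection of the unit normal along $-dK^i/|dK^i|$ combined with the pointwise prox-regular inequality; a secondary, purely technical concern is choosing the localization strong enough to make the exponential-weighted stochastic integral a true martingale in expectation.
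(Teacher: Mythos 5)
Your proof is correct and follows essentially the same route as the paper: Itô's formula applied to $\|X^1_t-X^2_t\|^2$ weighted by $\exp\bigl(-\tfrac{1}{\rho}(\operatorname{var}(K^1;[0,t])+\operatorname{var}(K^2;[0,t]))\bigr)$, the hypomonotonicity estimate from $\rho$-uniform prox-regularity to absorb the $dK^1-dK^2$ term, localization so the stochastic integral is a true martingale, and expectation to force the weighted quantity to vanish. The only (minor) deviation is that you also fold the rates $2\mathscr{L}_f+\mathscr{L}_\sigma^2$ into the exponential integrating factor, obtaining the bound pathwise, whereas the paper keeps the exponential only for the variation term and closes the estimate with Grönwall's inequality on expectations; both variants are equally valid.
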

	\begin{proof}
Consider the stopping time 
$$
\tau_n := \inf\{ t\in [0,T] : \max\{\|X_t^1\|,\|X_t^2\|\}\geq n \}.
$$ 
Define $\xi := X_0^1 = X_0^2$ and $Y^i := Z^{X^i}_\xi$ for $i\in \{1,2\}$.  On $\R\times\R^d$, define $\phi(x,y):=\exp(-\frac{1}{\rho}x)\|y\|^2$, which is of class $\mathcal{C}^2(\R\times \R^d)$, and the processes $U_t := \operatorname{var}(K^1;[0,t]) + \operatorname{var}(K^2;[0,t])$ and $V_t := X_t^1-X_t^2$. By Itô's formula (see, e.g., \cite[Theorem 3.6, p. 153]{MR1121940}),
\begin{equation}\label{ito-uniqueness}
\begin{aligned}
\phi(U_t,V_t^{\tau_n}) = & \ \frac{-1}{\rho}\int_{0}^t \exp(-U_s/\rho)\|V_s^{\tau_n}\|^2 dU_s +\int_{0}^t 2\exp(-U_s/\rho)\langle V_s^{\tau_n},dV_s^{\tau_n}\rangle\\
&+\sum_{i=1}^d \int_{0}^t \exp(-U_s/\rho) d[(V^{\tau_n})^i]_s,
\end{aligned}
\end{equation}
where $V^{\tau_n}_t := V_{\tau_n\wedge t}$, and we have used that $U$ has bounded variation; hence its quadratic variation is zero. Note that 
		\begin{equation*}
\begin{aligned}
				\int_{0}^t 2\exp(-U_s/\rho)&\langle V_s^{\tau_n},dV_s^{\tau_n}\rangle 
				 =   \ \int_0^{t\wedge\tau_n} 2\exp(-U_s/\rho)\langle V_s^{\tau_n},dK_s^1-dK_s^2\rangle\\
				&\quad  + \int_0^{t\wedge\tau_n} 2\exp(-U_s/\rho) \langle V_s^{\tau_n}, f(s,X_s^1)-f(s,X_s^2)\rangle ds\\
				&\quad + \int_0^{t\wedge\tau_n}  2\exp(-U_s/\rho) \langle (\sigma(s,X_s^1)-\sigma(s,X_s^2))^\top V_s^{\tau_n},dB_s\rangle,
			\end{aligned}
		\end{equation*}
		and 
		\begin{equation*}
			\sum_{i=1}^d \int_{0}^t \exp(-U_s/\rho) d[(V^{\tau_n})^i]_s = \int_0^{t\wedge\tau_n} \exp(-U_s/\rho)\|\sigma(s,X_s^1) - \sigma(s,X_s^2)\|^2 ds.
		\end{equation*}
		Moreover, due to the uniform prox-regularity of the moving sets $C(t)$, we have 
		\begin{equation*}
			\int_0^{t\wedge\tau_n} 2\exp(-U_s/\rho)\langle V_s^{\tau_n},dK_s^1-dK_s^2\rangle\leq \frac{1}{\rho}\int_{0}^t \exp(-U_s/\rho)\|V_s^{\tau_n}\|^2 dU_s.
		\end{equation*}
Using \eqref{L-lips-fsig1} and \eqref{L-lips-fsig2} together with the preceding equations in \eqref{ito-uniqueness}, we obtain, for all $t\in [0,T]$,
\begin{equation*}
			\begin{aligned}
				\phi(U_t,V_t^{\tau_n})\leq & \ \int_0^{t\wedge\tau_n} (\mathscr{L}_\sigma(s)^2+2\mathscr{L}_f(s))\phi(U_s,V_s^{\tau_n})ds\\
				&\quad\quad + \int_0^{t\wedge\tau_n}  2\exp(-U_s/\rho) \langle (\sigma(s,X_s^1)-\sigma(s,X_s^2))^\top V_s^{\tau_n},dB_s\rangle.
			\end{aligned}
		\end{equation*}
        Given that the second term of the right hand side is a martingale, after taking expectation we obtain that 
        \begin{equation}\label{eqn-expect-before-gronwall}
            \begin{aligned}
            \mathbb{E}(\phi(U_t,V_t^{\tau_n}))&\leq   \mathbb{E}\left(\int_0^{t\wedge\tau_n} (\mathscr{L}_\sigma(s)^2+2\mathscr{L}_f(s))\phi(U_s,V_s^{\tau_n})ds\right)\\
            &\leq \int_0^t (\mathscr{L}_\sigma(s)^2+2\mathscr{L}_f(s))\mathbb{E}(\phi(U_s,V_s^{\tau_n}))ds
            \end{aligned}
        \end{equation}
Since $\mathscr{L}_\sigma\in L^2([0,T];\R_+)$ and $\mathscr{L}_f\in L^1([0,T];\R_+)$ we have $s\mapsto \mathscr{L}_\sigma(s)^2+2\mathscr{L}_f(s)$ is integrable, and by definition of $\tau_n$ we have $s\mapsto \mathbb{E}(\phi(U_s,V_s^{\tau_n}))$ is continuous. Gronwall's lemma then implies 
\begin{equation*}
    \mathbb{E}\left( \phi(U_s,V_s^{\tau_n})\right) = 0, \forall s\in [0,T], \forall n\in \N.
\end{equation*}
For $s\in [0,T]$ fixed, we have for all $n\in \N$, $\mathbb{P}$-a.e. $\phi(U_s,V_s^{\tau_n})= 0$, and therefore, for all $n\in \mathbb{N}$,
\begin{equation*}
		      \mathbb{P}\text{-a.e. } \|X_{s\wedge\tau_n}^1-X_{s\wedge\tau_n}^2\|^2 = 0.
		  \end{equation*}
          Since $\mathbb{P}$-a.e. $\tau_n\nearrow T$, we have that $\mathbb{P}$-a.e. $X_s^1 = X_s^2$. By using \cite[Problem 1.1.5]{MR1121940} we conclude that $\mathbb{P}(X^1_t = X^2_t, \forall t\in [0,T]) = 1$.
	\end{proof}
\begin{remark}
    It is worth noting that, in the stationary case, a related result can be found in \cite[Lemma 5.5]{MR873889}, where pathwise uniqueness is established in a constrained sense (up to a certain exit time) and under specific geometric assumptions. In contrast, our approach provides pathwise uniqueness without requiring such geometric conditions (namely, \ref{H4mgh}).
\end{remark}

Now, our goal is to pass from weak existence to strong existence by using a variant of the Yamada-Watanabe theorem (see \cite{MR278420}), as used in \cite{MR1392450}. Our result extends \cite[Theorem 3.11]{MR2812587} because we consider continuous moving sets (rather than absolutely continuous ones),  and  \ref{H4mgh} is more general (and more manageable) than the admissibility condition considered there.
	
\begin{theorem}\label{thm-strong-existence}
Consider a set-valued mapping $C\colon [0,T]\to \H$ satisfying \ref{H1mgh}, \ref{H2mgh} and \ref{H4mgh}, and that $f,\sigma$ satisfy \eqref{L-lips-fsig1} and \eqref{L-lips-fsig2}. Then, for every choice of probability space $(\Omega,\mathcal{F},\mathbb{P})$, Brownian motion $B$, and random initial condition $x_0\colon \Omega\to C(0)$, independent of $B$, there exists a pathwise-unique strong solution to the stochastic sweeping process \eqref{SPEn}.
	\end{theorem}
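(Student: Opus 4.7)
The plan is to deduce strong existence and pathwise uniqueness from the weak existence established in Theorem \ref{weak-existence} and the pathwise uniqueness established in Theorem \ref{pathwise-uniqueness} by appealing to a Yamada--Watanabe-type principle in the form adapted to reflected SDEs (as used in \cite{MR1392450}).

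First, I would verify that both auxiliary theorems apply. The geometric hypotheses \ref{H1mgh}, \ref{H2mgh} and \ref{H4mgh} place us in the framework of Theorem \ref{weak-existence}, which produces a weak solution on some auxiliary filtered probability space with initial distribution $\mu := \mathbb{P}\circ x_0^{-1}$; the conditions \eqref{L-lips-fsig1}--\eqref{L-lips-fsig2} together with the integrability of $\mathscr{L}_f, \mathscr{L}_\sigma$ put us in the scope of Theorem \ref{pathwise-uniqueness}, hence pathwise uniqueness holds. The boundedness hypothesis \eqref{eqn_bounded_fsigma} required by Theorem \ref{weak-existence} is handled, if not already in force, by a standard localization: truncate $(f,\sigma)$ outside $\mathbb{B}_N(0)$, solve the truncated problem, and patch solutions along the exit times from $\mathbb{B}_N(0)$, using pathwise uniqueness to ensure consistency.

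The core of the argument is the Yamada--Watanabe principle. Combining weak existence with pathwise uniqueness yields a measurable map
\[
F : C(0) \times \mathcal{C}_0([0,T];\mathbb{R}^\ell) \to \mathcal{C}([0,T];\mathbb{R}^d),
\]
progressively measurable in its second argument, such that whenever $(B,X,K)$ is a weak solution with $X_0=x_0$, one has $X = F(x_0,B)$ almost surely. On the prescribed probability space $(\Omega,\mathcal{F},\mathbb{P})$ equipped with the Brownian motion $B$ and the $\mathcal{F}_0$-measurable initial condition $x_0$ independent of $B$, I would define $X := F(x_0, B)$, which is adapted to $(\mathcal{F}_t^B \vee \sigma(x_0))$. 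Setting $K_t := X_t - \mathcal{R}(X)_t$, the pair $(X,K)$ satisfies \eqref{SPEn} by construction; moreover, $K$ coincides with $Z_{x_0}^{\mathcal{R}(X)}$ via the Skorokhod representation, so $K$ inherits the finite variation property from Corollary \ref{mc-ch-cor}. Pathwise uniqueness on the given probability space is then simply a restatement of Theorem \ref{pathwise-uniqueness}.

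The main technical obstacle is carrying out the Yamada--Watanabe machinery in the reflected setting, since one must handle not only the diffusive process $X$ but also the finite variation correction $K$ on a canonical product space endowed with regular conditional probabilities. Fortunately, the identity $K = Z_{x_0}^{\mathcal{R}(X)}$ together with the continuity of the Skorokhod map $(\eta, h) \mapsto Z_\eta^h$ from Proposition \ref{lemma-cont-sol} ensures that $K$ is a measurable functional of $(x_0, X)$, so the classical coupling-on-the-product-space argument---take two copies of a weak solution driven by the same Brownian motion and the same initial condition, apply pathwise uniqueness to identify them, and extract $F$ via a measurable selection---goes through without substantive modification.
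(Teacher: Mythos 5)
Your overall strategy (weak existence plus pathwise uniqueness implies strong existence) is the right idea and is the same principle the paper invokes, but your implementation is genuinely different from the paper's and, as written, its central step is asserted rather than proved. The paper never constructs a measurable solution functional $F$. Instead it runs the Gy\"ongy--Krylov argument of \cite[Lemma 1.1]{MR1392450} directly on the Euler-type approximations $(Y_n)$ built in the proof of Theorem \ref{weak-existence}: for two arbitrary subsequences one shows tightness of $(x_0,B,Y_{n_k},Y_{m_k})$, passes to a.s.\ convergent Skorokhod representations, identifies both limit components as weak solutions driven by the \emph{same} Brownian motion and the same initial datum, invokes Theorem \ref{pathwise-uniqueness} to conclude the limit law is carried by the diagonal, and deduces from the cited lemma that $(Y_n)$ converges in probability on the original space; adaptedness of the limit to $(\mathcal{F}^B_t\vee\sigma(x_0))$ is then automatic because each $X_n$ is adapted by construction. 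That route is designed precisely to avoid the canonical-space/regular-conditional-probability machinery that your proposal relies on.

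The gap in your proposal is exactly that machinery. The claim ``weak existence plus pathwise uniqueness yields a measurable map $F$ with $X=F(x_0,B)$ a.s., and $F(x_0,B)$ is again a solution on any prescribed space'' \emph{is} the Yamada--Watanabe theorem, and for the present solution concept --- a pair $(X,K)$ with $X_t\in C(t)$, $K$ of finite variation and $\frac{dK_t}{|dK|_t}\in -N^P(C(t);X_t)$ $|dK|$-a.e., with a time-dependent prox-regular constraint --- it cannot be quoted off the shelf: \cite{MR278420} is stated for It\^o equations, and \cite{MR1392450}, to which you attribute the functional representation, contains the convergence-in-probability criterion used by the paper, not a solution-functional construction. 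Making your argument rigorous would require transferring two weak solutions to a product canonical space, verifying that under regular conditional probabilities the transferred objects are still solutions (in particular that the constraint, the normal-cone inclusion for $K$, and the stochastic integral --- which is not a pathwise functional of $(X,B)$ --- survive the conditioning and the enlargement of filtration), and a measurable-selection step to produce $F$ together with its adaptedness; declaring that this ``goes through without substantive modification'' leaves the main technical content of the theorem unproved. Two minor points: \eqref{eqn_bounded_fsigma} is a standing assumption of Section \ref{sec7}, so the truncation/localization digression is unnecessary; and $K$ is a measurable functional of $(x_0,X,B)$ through $\mathcal{R}(X)$, not of $(x_0,X)$ alone, although the identity $K=Z^{\mathcal{R}(X)}_{x_0}$ does give the finite-variation property as you state.
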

	\begin{proof}
Consider the sequence $(Y_n)$ generated in the proof of Theorem \ref{weak-existence} from $(x_0,B)$. Let $(n_k)$ and $(m_k)$ be two subsequences. We will prove that there is a further subsequence such that $(Y_{n_{k_\ell}},Y_{m_{k_\ell}})$ converges in law to some process $(Z,Z)$,  where $Z$ is a random variable, defined on some probability space, taking values on $\mathcal{C}([0,T];\R^d)$.\\
Consider the sequence $(x_0,B,Y_{n_{k}},Y_{m_{k}})_k$. The same argument used in the proof of Theorem \ref{weak-existence} shows that the sequence is tight;  therefore, there exist processes  $(\tilde{x}_{n_{k_\ell}})$, $(\tilde{B}_{n_{k_\ell}})$, $(\tilde{Y}_{n_{k_\ell}})$, $(\tilde{Y}_{m_{k_\ell}})$, all defined on the same stochastic basis, such that they converge a.e. to some random variables $\tilde{x}_0$, $\tilde{B}$, $\tilde{Y}^1$, $\tilde{Y}^{2}$, respectively, and for all $\ell$ 
 	\begin{equation}\label{eqn-eq-law}
 		(x_0,B,Y_{n_{k_\ell}},Y_{m_{k_\ell}}) \stackrel{\mathscr{L}}{=} (\tilde{x}_{n_{k_\ell}},\tilde{B}_{n_{k_\ell}},\tilde{Y}_{n_{k_\ell}},\tilde{Y}_{m_{k_\ell}}).
 	\end{equation}
 Define $\tilde{X}^j := Z^{\tilde{Y}^{j}}_{\tilde{x}_0} + \tilde{Y}^j$ for $j\in \{1,2\}$. By the same argument as in Theorem \ref{weak-existence}, $(\tilde{X}^j,Z^{\tilde{Y}^{j}}_{\tilde{x}_0})$ is a solution of \eqref{SPEn} starting from $\tilde{x}_0$, driven by the same Brownian motion $\tilde{B}$, which turns out to be a $(\mathcal{F}_t^{\tilde{B},\tilde{X}^1,\tilde{X}^2})$-Brownian motion. Then  pathwise uniqueness (Theorem \ref{pathwise-uniqueness}) implies that  $\tilde{X}^1 = \tilde{X}^2$ a.e., hence  $\tilde{Y}^1 = \tilde{Y}^2$ a.e. Choose $Z = \tilde{Y}^1$. Since $(\tilde{Y}_{n_{k_\ell}},\tilde{Y}_{m_{k_\ell}})$ converges a.e. to $(Z,Z)$, by \eqref{eqn-eq-law} we conclude that $(Y_{n_{k_\ell}}, Y_{m_{k_\ell}})$ converges in law to $(Z,Z)$. By \cite[Lemma 1.1]{MR1392450}, $(Y_n)$ converges in probability to some random variable $Y\colon \Omega\to \mathcal{C}([0,T];\R^d)$; thus we can extract a subsequence $(Y_{n_i})$ converging a.e. to $Y$ (see, e.g., \cite[Theorem 9.2.1]{MR1932358}). Using again the argument from Theorem \ref{weak-existence}, we obtain that $(X,K)$ is a solution of \eqref{SPEn}, with $X := Z^Y_{x_0} + Y$ and $K:=Z^Y_{x_0}$; it is in fact strong, since,  by construction, for all $n\in \N$, $X_n$ is $(\mathcal{F}^B_t\vee \sigma(x_0))$-adapted. This yields the result.
	\end{proof}
\begin{remark}
    Under condition \ref{H4mgh}, the stationary case of $C$ was already addressed in \cite[Theorem 41]{Buckdahn2015}. Indeed, it appears there as a particular instance of a more general framework: the authors consider a stochastic differential inclusion driven by the subdifferential of a semiconvex function, rather than by the normal cone of a uniformly prox-regular set.
\end{remark}
    
    \begin{remark}
        In \cite[Theorem 3.2 \& Corollary 3.3] {Costantini2006Boundary}, a similar result is established, where the moving set is bounded, time-dependent, and satisfies a smoothness assumption stronger than the requirement that both maps $t\tto C(t)$ and $t\tto\cl(\H\setminus C(t))$ are uniformly prox-regular. By Proposition \ref{rmk-pr-cpr}, in the finite-dimensional case this condition implies \ref{H4mgh}. Therefore, Theorem \ref{thm-strong-existence} extends the aforementioned result. On the other hand, although \cite{MR2683628} considers a moving set that varies continuously with respect to the Hausdorff distance and allows for oblique reflection, normal reflection does not satisfy the assumptions in the general case, since Hausdorff-continuity of the reflection cone is required.
    \end{remark}
    
    \section*{Acknowledgements}

    \begin{itemize}
        \item J. G. Garrido was supported by ANID Chile under grants CMM BASAL funds for Center of Excellence FB210005, Project ECOS230027, MATH-AMSUD 23-MATH-17, and ANID BECAS/DOCTORADO NACIONAL 21230802.
        \item N. Kazi-Tani was supported by the ECOS–ANID project C23E06 and the French Research Agency through the ANR project DREAMeS (ANR-21-CE46-0004).
        \item E. Vilches was supported by ANID (Chile) through Fondecyt Regular grants No.~1220886, No.~1240120, and No.~1261728, CMM BASAL funds for the Center of Excellence FB210005, project ECOS230027, MATH-AmSud 23-MATH-17.
    \end{itemize}

    {\small
	\bibliographystyle{elsarticle-harv}
    \bibliography{references}
    }
 \appendix
 \begingroup
 \small
 \setlength{\baselineskip}{0.9\baselineskip}
 
 \section{Technical Proofs}
\subsection{Proof of Lemma \ref{C(t)H-statement}}\label{C(t)=H.proof}
\noindent Consider $t\in [0,T]$. The Hausdorff-continuity allows to get a finite sequence $t_1,t_2,...,t_k$ such that $d_H(C(t_i),C(t_{i+1}))<\rho$ for all $i=0,1,...,k$ where $t_{k+1} := t$. Since $C(t_0) = \H$, we are going to prove that $C(t_1) = \H$. Indeed, since $d_H(C(t_0),C(t_1))<\rho$, we have for all $x\in \H$, $d(x;C(t_1))<\rho$. Suppose $C(t_1)\neq \H$, i.e. there exists $x\in \H\setminus C(t_1)$. Since $C(t_1)$ is $\rho$-uniformly prox-regular and $d(x;C(t_1))<\rho$, $p:=\proj_{C(t_1)}(x)$ is well-defined and $p\neq x$. Moreover, $C(t_1)\cap \mathbb{B}_{\rho}(p+\rho\frac{x-p}{\|x-p\|}) = \emptyset$ (due to the uniform prox-regularity), then $d\left(p+\rho\frac{x-p}{\|x-p\|};C(t_1)\right)\geq \rho$ which is a contradiction, thus $C(t_1) = \H$. Next, we can proceed by an induction process in order to get that $C(t_i) = \H$ for all $i=1,...,k+1$, therefore $C(t) = \H$, concluding the desired.\qed

\subsection{Proof of Lemma \ref{lemma_partition}}\label{appendix1}

We have $\B_{r}(\ov x)\subset C(\ov t)$. Define $\beta:=\min\{r/2,\rho/2\} $, then $\delta= \mathscr{P}_{C}^{-1}(\beta)$. In particular,
\begin{equation}\label{eqn1-1}
d_H(C(t),C(\ov t))<\beta, \forall t\in ]\ov t-\delta, \ov t +\delta[\cap [0,T].
\end{equation}
Take $t\in ]\ov t-\delta, \ov t +\delta[\cap [0,T]$, we shall prove that $\B_{r/2}(\ov x)\subset C(t)$. Indeed, suppose this does not hold.  Then, there exists $a\in \B_{r/2}(\ov x)\setminus C(t)$, and \eqref{eqn1-1} yields $0<d(a;C(t)) <\beta$. Since $C(t)$ is $\rho$-uniformly prox-regular and $\beta<\rho$, we have that $\proj_{C(t)}(a)$ is well-defined. Consider  
$$y_\alpha:=\text{proj}_{C(t)}(a)+\frac{\alpha}{d(a;C(t))}(a-\text{proj}_{C(t)}(a)).$$ It is clear that for $0<\alpha<\rho$, $\proj_{C(t)}(y) = \proj_{C(t)}(a)$, hence $d(y;C(t)) = \alpha$. On the other hand,  
\begin{equation*}
    \|\ov x-y_\alpha\|\leq \|\ov x-a\| + \|a-y_\alpha\| < r/2+\|a-y_\alpha\|
     = r/2+ |d(a;C(t))-\alpha|.  
\end{equation*}
Taking $\alpha := d(a;C(t))+\beta$, we have $0<\alpha<2\beta\leq \rho$ and $$\|\ov x-y_\alpha\|< r/2+\beta\leq r/2 + r/2 = r,$$ thus $y_\alpha \in \B_{r}(\ov x)\subset C(\ov t)$, and by \eqref{eqn1-1} yields $d(y_\alpha;C(t))<\beta$, but it cannot happen since $d(y_\alpha;C(t)) = \alpha$ and $d(a;C(t))>0$. We get a contradiction, therefore $\mathbb{B}_{r/2}(\ov x)\subset C(t), \forall t\in ]\ov t-\delta, \ov t+\delta[\cap [0,T]$.\qed

\subsection{Proof of Proposition \ref{h1h2-bd}}\label{appendix2}

		Let $t\in [0,T]$ such that $\bd C(t)\neq \emptyset$ and $\epsilon>0$. Take $\epsilon' := \min\{\epsilon,\rho/2\}$. By the Hausdorff continuity of $C$, there exists $\delta>0$ such that $d_H(C(s),C(t))<\epsilon'$ for all $s\in ]t-\delta,t+\delta[$. Fix any such $s\in ]t-\delta,t+\delta[$ with $\bd C(s)\neq \emptyset$. Then there exists $\eta<\epsilon'$ such that 
        $$
        C(t)\subset C(s)+\eta\B \textrm{ and } C(s)\subset C(t) + \eta\B.
        $$
        First, let $x\in \bd C(t)\setminus \inte C(s)$. Since $d(x;C(s))\leq \eta<\rho$, the prox-regularity of $C(s)$ ensures that the metric projection $\proj_{C(s)}(x)$ is well-defined and belongs to $\bd C(s)$. Hence, 
        $$
        d(x;\bd C(s))\leq \|x-\proj_{C(s)}(x)\| = d(x;C(s)) \leq d(x;\bd C(s)),
        $$
        and thus $d(x;\bd C(s)) = d(x;C(s))$. Second, let $x\in \bd C(t)\cap\inte C(s)$ and suppose, by contradiction, that there exists $\gamma>0$ with $\mathbb{B}_{\eta+\gamma}(x)\cap \bd C(s) = \emptyset$. Then  $\mathbb{B}_{\eta+\gamma}(x)\subset \inte C(s)$, so $\mathbb{B}_{\eta+\gamma}(x)\subset C(t) + \eta\mathbb{B}$. By \cite[Corollary 1.6.2]{Clarke1998}, there exists $\bar x\in \bd C(t)\cap \mathbb{B}_{\gamma/2}(x)$ with $N^P(C(t);\bar x)\neq\{0\}$. Hence $\mathbb{B}_{\eta+\gamma/2}(\bar x)\subset C(t)+\eta\mathbb{B}$. Pick $v\in N^P(C(t);\bar x)\setminus\{0\}$ and choose  $\alpha\in ]\eta , \min\{\rho,\eta+\gamma/2\}[$. Then 
        $$
        \bar x + \frac{\alpha}{\|v\|}v\in \mathbb{B}_{\eta+\gamma/2}(\bar x)\subset C(t) + \eta\mathbb{B},
        $$ whereas, by proximal normality, $d(\bar x + \frac{\alpha}{\|v\|}v;C(t)) = \alpha>\eta$, which is a contradiction. Therefore, for every $\gamma>0$ we must have $\mathbb{B}_{\eta+\gamma}(x)\cap \bd C(s)\neq\emptyset$, and hence  $d(x;\bd C(s))\leq \eta$ for all $x\in \bd C(t)\cap\inte C(s)$. Combining the two cases,  
		\begin{equation*}
				\sup_{x\in \bd C(t)}d_{C(s)}(x) \leq \sup_{x\in \bd C(t)}\max\{d_{C(s)}(x),\eta\}\leq \max\bigl\{\eta,\sup_{x\in C(t)}d_{C(s)}(x)\bigr\}.   
		\end{equation*}
		By symmetry, the same argument yields
        $$\sup_{x\in \bd C(s)}d(x;\bd C(t)) \leq \max\bigl\{\eta,\sup_{x\in C(s)}d(x;C(t))\bigr\}.
        $$
        Therefore, $d_H(\bd C(s),\bd C(t))\leq \max\{\eta,d_H(C(s),C(t))\}<\epsilon'<\epsilon$. Since $\epsilon>0$ was arbitrary, it follows that $t\tto \bd C(t)$ is Hausdorff continuous on its domain.\qed

\subsection{Proof of Lemma \ref{upper-semi-support}}\label{appendix4}
 Take sequences $\xi_n\to \ov \xi$, $t_n\to \ov t$ and $x_n\to \ov x$ such that 
		\begin{equation*}
			\limsup_{\substack{x\to \ov x, t\to \ov t\\ \xi\to \ov \xi}} \sigma(\xi;\partial_P d_{C(t)}(x)\cap \mathbb{S}) = \lim_{n\to \infty} \sigma(\xi_n;\partial_P d_{C(t_n)}(x_n)\cap \mathbb{S}).
		\end{equation*}
		By compactness, for each $n\in\N$ there exists $v_n\in\partial_P d_{C(t_n)}(x_n)\cap\mathbb{S}$ such that $$\sigma(\xi_n;\partial_P d_{C(t_n)}(x_n)\cap \mathbb{S})=\langle\xi_n,v_n\rangle .$$ Passing to a subsequence if necessary, we may assume $v_n\to v\in \mathbb{S}$. We now show that $v\in \partial_P d_{C(\ov t)}(\ov x)$. Take any $y\in C(\ov t)$. Then there exists $N\in\N$ such that for all $n\geq N$, $y\in C(t_n)+\frac{\rho}{2}\mathbb{B}$. Since $x_n\to x$, we may also assume $x_n\in C(t_n) + \frac{\rho}{2}\mathbb{B}$ for all $n\geq N$. By \cite[Theorem 5]{MR4431268}, for all $n\geq N$ we have
		\begin{equation*}
			\begin{aligned}
				\langle v_n,y-x_n\rangle &\leq \frac{2}{\rho}\|y-x_n\|^2 + d_{C(t_n)}(y) - d_{C(t_n)}(x_n)\\
				&\leq \frac{2}{\rho}\|y-x_n\|^2 + d_H(C(t_n),C(\ov t)).
			\end{aligned}
		\end{equation*}
		Letting $n\to \infty$, we obtain $\langle v,y-\ov x\rangle\leq \frac{2}{\rho}\|y-\ov x\|^2$. Since this holds for all $y\in C(\ov t)$, it follows that $v\in N^P(C(\ov t);\ov x)$, and hence $v\in \partial_P d_{C(\ov t)}(\ov x)$. Finally, we observe
		\begin{equation*}
				\lim_{n\to \infty} \sigma(\xi_n;\partial_P d_{C(t_n)}(x_n)\cap \mathbb{S}) = \lim_{n\to\infty}\langle \xi_n,v_n\rangle = \langle\ov \xi,v\rangle \leq \sigma(\ov \xi;\partial_P d_{C(\ov t)}(\ov x)\cap \mathbb{S}),
		\end{equation*}
        which completes the proof.\qed

\subsection{Proof of Proposition \ref{prop-sublevel-h4-h5}}\label{appendix3}

Fix $t\in [0,T]$ and $x\in  C(t)$ with $I_\epsilon(t,x)\neq \emptyset$. By the classical min-max theorem (see, e.g., \cite[Lemma 1.85]{MR2986672}), we have
 \begin{equation}\label{min-max}
 	\begin{aligned}
 		&\inf_{v\in \mathbb{B}}\max_{i\in I_\epsilon(t,x)}\left\langle v,\nabla g_i(t,x)\right\rangle\\ 
 		=& -\min\left\{\left\|\sum_{i\in I_\epsilon(t,x)}\lambda_i \nabla g_i(t,x)\right\| : \sum_{i\in I_\epsilon(t,x)}\lambda_i = 1, \lambda_i\geq 0\right\}=:\xi(t,x).
 	\end{aligned}
 \end{equation}
 \emph{\textbf{Claim 1.}} Define $\kappa := \sup\{\xi(t,x) : t\in [0,T], x\in C(t), i\in I_\epsilon(t,x)\}$. Then $\kappa<0$.\\
 \emph{Proof of Claim 1.} Suppose not.  Then there exist sequences $(t_n)\subset [0,T]$ and $(x_n)$ with $x_n\in C(t_n)$ and $I_\epsilon(t_n,x_n)\neq \emptyset$ such that 
 \begin{equation*}
 	\left\|\sum_{i\in I_\epsilon(t_n,x_n)}\lambda_i^n\nabla g_i(t_n,x_n)\right\|\leq \frac{1}{n},
 \end{equation*}
 with $\sum_{i\in I_\epsilon(t_n,x_n)} \lambda_i^n = 1$ and $\lambda_i^n\geq 0$. For every $i\in \{1,...,m\}$, define $S_i := \{n\in \N : i\in I_\epsilon(t_n,x_n)\}$ and let $J = \{j : S_j \text{ is infinite}\}$. Clearly $J\neq \emptyset$. It cannot happen that $\lambda_i^n$ vanish along $S_i$, since the sum of coefficients is always one. Hence  there exists $\ell\in J$ and a subsequence $(n_k)\subset S_{\ell}$ such that $ \lambda_\ell^{n_k} \to \lambda_{\ell}^{\ast}>0$. Since $-\epsilon\leq g_\ell(t_{n_k},x_{n_k})\leq 0$, assumption $(a)$ and continuity imply that  $(t_{n_k},x_{n_k}) \to (\bar t,\bar x)$ with $\bar x\in C(\bar t)$ and $\ell \in I_\epsilon(\bar t,\bar x)$. Passing to the limit yields
 $$\lambda_\ell^\ast\nabla g_\ell(\bar t,\bar x) + \sum_{i\in\mathscr{I}} \lambda_i\nabla g_i(\bar t,\bar x) = 0 \quad \textrm{ for some } \mathscr{I}\subset I_\epsilon(\bar t,\bar x).$$
Positive linear independence then forces all coefficients to vanish, a contradiction.\\
\emph{\textbf{Claim 2:}} For all $t\in [0,T]$ and $x\in C(t)$ with $I_\epsilon(t,x)\neq \emptyset$, there is $v_{t,x}\in \mathbb{B}\setminus \{0\}$ such that 
 $$
 \langle v_{t,x},\nabla g_i(t,x)\rangle\leq \kappa \textrm{ for all } i\in I_\epsilon(t,x).
 $$
\noindent \emph{Proof of Claim 2.} Since $I_\epsilon(t,x)\neq \emptyset$, this follows directly from \eqref{min-max} and Claim 1.\\
\emph{\textbf{Claim 3:}} \ref{H4mgh} holds.\\ 
 \emph{Proof of Claim 3.} Let  
 $$
 \ell := \min\left\{1,\frac{\eta}{4},\frac{-\kappa}{4L},\frac{\epsilon}{2\mathsf{K}}\right\}>0,\, r := \min\left\{1,\frac{\eta}{8},\frac{-\kappa\ell}{4(2L+\mathsf{K})},\frac{\epsilon}{4\mathsf{K}}\right\},\, M := \frac{\ell}{r},
 $$
 where $\mathsf{K} := \sup\{\|\nabla g_i(t,x)\| : (t,x)\in \mathscr{Q}_\epsilon+\eta\mathbb{B},  i\in I\}$. We observe that, by virtue of assumptions $(a)$ and $(b)$,  the constant $\mathsf{K}$ is finite. \\
 Fix $t\in [0,T]$ and $x\in \bd C(t)$, and set $z := x+\ell v_{t,x}$. Then $\|z-x\|\leq Mr$. For any $\lambda\in [0,1]$ and $z'\in \mathbb{B}_{2r}(z)$, define $y := \lambda x + (1-\lambda)z'$. We show  $y\in C(t)$, which implies $\operatorname{co}(\{x\}\cup\mathbb{B}_{2r}(z))\subset C(t)$ and hence \ref{H4mgh}. First, 
 \begin{equation}\label{desigualdad-y}
 \|y-x\|= (1-\lambda)\|x-z'\|\leq \|x-z\| + 2r \leq \ell + 2r \leq \min\{\eta/2, \epsilon/\mathsf{K}\}.
 \end{equation}
 Thus $(t,y)\in \mathscr{Q}_\epsilon+\frac{\eta}{2} \mathbb{B}\subset \mathscr{Q}_\epsilon+\eta \mathbb{B}$, so $\Vert \nabla g_i(t,y)\Vert \leq \mathsf{K}$ for all $i\in I$.\\
 \noindent\emph{ Case 1: $i\notin I_\epsilon(t,x)$.} \\
 Then $g_i(t,x)<-\epsilon$, and by \eqref{desigualdad-y},
\begin{equation*}
	\begin{aligned}
		g_i(t,y)&\leq g_i(t,y) - g_i(t,x) + g_i(t,x)
        \leq \mathsf{K}\|y-x\| - \epsilon\leq \mathsf{K}(2r + \ell) - \epsilon \leq 0.
	\end{aligned}
\end{equation*}
\noindent\emph{Case 2: $i\in I_\epsilon(t,x)$.} \\
Write $y-x=(1-\lambda)(\ell v_{t,x}+\delta)$ where $\delta:=z'-z$ and $\Vert \delta\Vert \leq 2r$. By the descent lemma (since $\nabla g_i(t,\cdot)$ is $L$-Lipschitz on $C(t)+\eta\mathbb{B}$),
\begin{equation*}
\begin{aligned}
g_i(t,y)
&\leq g_i(t,x)+\langle \nabla g_i(t,x),\,y-x\rangle+\frac{L}{2}\|y-x\|^{2} \\
&\leq   (1-\lambda)\!\left(\ell\left\langle \nabla g_i(t,x), v_{t,x}\right\rangle
      + \left\langle \nabla g_i(t,x), \delta \right\rangle\right)
   + \frac{L}{2}(1-\lambda)^{2}\|\ell v_{t,x}+\delta\|^{2} \\
&\leq  (1-\lambda)\!\left(\ell \kappa + \|\nabla g_i(t,x)\|\,\|\delta\|\right)
   + L(1-\lambda)^{2}\!\left(\ell^{2} + \|\delta\|^{2}\right) \\
&\leq \ell \kappa + 2r\,\mathsf{K} + L\ell^{2} + 4L r^{2}.
\end{aligned}
\end{equation*}
Since $r\leq 1$, we have 
$$
g_i(t,y)\leq \ell \kappa +L\ell^2+r(2\mathsf{K}+4L)\leq \frac{\kappa \ell}{2}<0,
$$
where we have used $r\leq \frac{-\kappa \ell}{4(2L+\mathsf{K})}$ and $\ell\leq \frac{-\kappa}{4L}$ (recall $\kappa <0$). In both cases $g_i(t,y)\leq 0$, hence $y\in C(t)$. This proves \ref{H4mgh}.\\
  \noindent \emph{{\bf Claim 4.} $t\tto C(t)$ is Hausdorff-continuous.}\\
  \emph{Proof of Claim 4.} Consider $\beta\in ]0,\min\{1,\frac{\epsilon}{2\mathsf{K}},\epsilon,-\kappa/L,\eta,\eta'\}[$. By equicontinuity in $(b)$,  there exists $\vartheta>0$ such that for all $t',s'\in [0,T]$ with $|t'-s'|<\vartheta$, 
  $$
  \forall i\in I, \forall x'\in \mathsf{D}+\eta'\mathbb{B} : |g_i(t',x')-g_i(s',x')|\leq \min\{\epsilon/2,-\beta\kappa/2\}.$$
  where $\mathsf{D} = \bigcup_{s\in [0,T]} C(s)$. Fix $t,s\in [0,T]$ with $|t-s|<\vartheta$, and let $x\in C(t)$. On the one hand, if $I_\epsilon(t,x) = \emptyset$, then $g_i(t,x)<-\epsilon$ for all $i\in I$. Hence, 
  \begin{equation*}
  	g_i(s,x) = g_i(s,x)-g_i(t,x) + g_i(t,x)\leq \frac{\epsilon}{2} - \epsilon <0
  \end{equation*} 
so $x\in C(s)$. Hence $C(t)\subset C(s)\subset C(s)+\beta\mathbb{B}$. On the other hand, assume that $I_\epsilon(t,x)\neq \emptyset$. By Claim 2, pick $v\in \mathbb{B}\setminus \{0\}$ with $\langle v,\nabla g_i(t,x)\rangle\leq \kappa$ for all $i\in I_\epsilon(t,x)$. Note that $\beta<\eta'$. We show $x+\beta v\in C(s)$. \\
-If $i\notin I_\epsilon(t,x)$, then $g_i(t,x)\leq -\epsilon$. Moreover, since $\beta \leq \epsilon/(2\mathsf{K})$, one has
	\begin{equation*}
		\begin{aligned}
			g_i(s,x+\beta v)& = g_i(s,x+\beta v)- g_i(t,x+\beta v) +g_i(t,x+\beta v)-g_i(t,x) + g_i(t,x)\\
			&\leq \frac{\epsilon}{2} + \mathsf{K}\beta -\epsilon \leq  \frac{\epsilon}{2} + \frac{\epsilon}{2} -\epsilon=0,
		\end{aligned}
	\end{equation*} 
-If $i\in I_{\epsilon}(t,x)$, then by the descent lemma
	\begin{equation*}
		\begin{aligned}
			g_i(s,x+\beta v) &= g_i(s,x+\beta v) - g_i(t,x+\beta v) + g_i(t,x+\beta v)\\
			&\leq  -\frac{\kappa\beta}{2} + g_i(t,x) + \beta\langle \nabla g_i(t,x),v\rangle + \frac{L\beta^2}{2}\|v\|^2\\
			&\leq  -\frac{\kappa\beta}{2} + \beta\kappa -\frac{\kappa\beta}{2} = 0,
		\end{aligned}
	\end{equation*}
where we used $g_i(t,x)\leq 0$, $\langle \nabla g_i(t,x),v\rangle \leq \kappa$, and $L\beta^2\leq -\kappa \beta$.\\
Thus $x+\beta v\in C(s)$, hence $x\in C(s)+\beta \mathbb{B}$. We conclude  $C(t)\subset C(s) + \beta\mathbb{B}$. Exchanging the roles of $t$ and $s$ yields $C(s)\subset C(t)+\beta\mathbb{B}$. Therefore, $t\tto C(t)$ is Hausdorff continuous.\\
\emph{{\bf Claim 5.} For all $t\in [0,T]$, $C(t)$ is $\min\{\eta,-\kappa/L\}$-uniformly prox-regular.}\\
\emph{Proof of Claim 5.} Using $(b)$ and Claim 2, this follows from  \cite[Corollary 15.101]{Thibault-2023-II}.\qed
\endgroup





\end{document}